\definecolor{darkblue}{rgb}{0.0, 0.0, 0.55}
\definecolor{darkred}{rgb}{0.55, 0.0, 0.0}
\definecolor{darkgreen}{rgb}{0.0, 0.55, 0.0}
\newtheorem{theorem}{Theorem}[section]
\theoremstyle{definition}
\newtheorem{definition}[theorem]{Definition}
\theoremstyle{remark}
\newtheorem{remark}[theorem]{Remark}
\theoremstyle{definition}
\newtheorem{problem}[theorem]{Problem}
\newcommand{\R}{\mathbb{R}}
\newcommand{\Prob}{\mathcal{P}}
\newcommand{\X}{\mathcal{X}} %
\newcommand{\Y}{\mathcal{Y}} %
\newcommand{\OmegaDomain}{\Omega} %
\newcommand{\dd}{\,\mathrm{d}} %
\newcommand{\KL}{\mathrm{KL}} %
\newcommand{\argmin}{\operatornamewithlimits{argmin}}
\newcommand{\argmax}{\operatornamewithlimits{argmax}}
\newcommand{\defeq}{\mathrel{:=}} %
\newcommand{\bpsi}{\boldsymbol{\psi}} %
\newcommand{\bnu}{\boldsymbol{\nu}} %
\newcommand{\calT}{\mathcal{T}} %
\newcommand{\W}{\mathcal{W}} %
\pgfplotsset{compat=1.18}
\begin{document}

\title[Efficient RSOT Strategies]{Efficient Numerical Strategies for Entropy-Regularized Semi-Discrete Optimal Transport}

\author[Khamlich]{Moaad Khamlich$^{1}$}
\author[Romor]{Francesco Romor$^{2}$}
\author[Rozza]{Gianluigi Rozza$^{1}$}

\thanks{$^{1}$SISSA, International School for Advanced Studies, Mathematics Area, mathLab, via Bonomea 265, 34136 Trieste, Italy. Email: moaad.khamlich@sissa.it, gianluigi.rozza@sissa.it}
\thanks{$^{2}$WIAS, Weierstrass Institute for Applied Analysis and Stochastics, Mohrenstr. 39, 10117 Berlin, Germany. Email: francesco.romor@wias-berlin.de}

\begin{abstract}
Semi-discrete optimal transport (SOT), which maps a continuous probability measure to a discrete one, is a fundamental problem with wide-ranging applications. Entropic regularization is often employed to solve the SOT problem, leading to a regularized (RSOT) formulation that can be solved efficiently via its convex dual. However, a significant computational challenge emerges when the continuous source measure is discretized via the finite element (FE) method to handle complex geometries or densities, such as those arising from solutions to Partial Differential Equations (PDEs). The evaluation of the dual objective function requires dense interactions between the numerous source quadrature points and all target points, creating a severe bottleneck for large-scale problems. This paper presents a cohesive framework of numerical strategies to overcome this challenge. We accelerate the dual objective and gradient evaluations by combining distance-based truncation with fast spatial queries using R-trees. For overall convergence, we integrate multilevel techniques based on hierarchies of both the FE source mesh and the discrete target measure, alongside a robust scheduling strategy for the regularization parameter. When unified, these methods drastically reduce the computational cost of RSOT, enabling its practical application to complex, large-scale scenarios. We provide an open-source C++ implementation of this framework, built upon the \texttt{deal.II} finite element library, available at \url{https://github.com/SemiDiscreteOT/SemiDiscreteOT}.
\end{abstract}
\maketitle

\section{Introduction}
\label{sec:introduction}

Optimal Transport (OT) stands as a fundamental mathematical framework for analyzing probability measures, determining efficient ways to transform one distribution into another under specified cost constraints \cite{santambrogio2015optimal}. This field has evolved from Monge's pioneering work \cite{Monge1781} to Kantorovich's modern formulation \cite{Kantorovich1942}, establishing itself as a cornerstone of mathematical analysis. The framework's significance emerges particularly through the Wasserstein distance which metrizes the space of probability measures and provide more geometrically meaningful properties than traditional statistical metrics like the Kullback-Leibler divergence \cite{villani2003topics, villani2008optimal}. This geometric perspective has fueled its adoption across diverse fields. These range from its foundational role in understanding certain Partial Differential Equations (PDEs) as gradient flows \cite{Jordan1998, Ambrosio2008} to its use in machine learning \cite{peyre2019computational} (e.g., for generative models like Wasserstein GANs \cite{WGAN, WGAN_training}, domain adaptation \cite{Courty2017}, word embeddings \cite{Yamada2025}, and defining robust loss functions \cite{Frogner2017}). In scientific computing, it provides a powerful tool for comparing complex data, such as density fields arising from solutions to PDEs \cite{Haker2004, Papadakis2014}.

A frequently encountered variant is semi-discrete optimal transport (SOT), where mass from a continuous probability measure $\mu$, defined on a domain $\Omega \subset \R^d$, is transported to a discrete measure $\nu = \sum_{j=1}^N \nu_j \delta_{y_j}$, supported on a finite set $Y = \{y_1, \dots, y_N\}$. This setting arises naturally in problems like point cloud quantization, mesh generation, and matching continuous data, such as images or PDE solutions defined on a mesh, to discrete feature sets \cite{Gu2016, degoes2012}. For the standard quadratic cost $c(x,y) = \frac{1}{2}\|x-y\|^2$, the optimal transport map is piecewise constant, defined by a Laguerre diagram (also known as a power diagram) partition of the source domain \cite{Aurenhammer1991, merigot2011multiscale}. Efficient algorithms for computing this partition and the corresponding transport map, which form the basis of modern numerical SOT, have been developed by leveraging robust computational geometry techniques \cite{merigot2011multiscale, degoes2012, levy2015numerical, levy2018notions}.

Entropic regularization \cite{peyre2019computational,Cuturi2013} offers a popular alternative by adding a Kullback-Leibler divergence term to the OT objective, scaled by a parameter $\varepsilon > 0$. This approach has roots in the Schrödinger bridge problem \cite{Leonard2013} and has revolutionized computational OT. It yields a strictly convex problem with a unique, smoother optimal transport \emph{plan} $\pi_\varepsilon$, which converges to an optimal Kantorovich plan as $\varepsilon \to 0$ \cite{Chizat2021}. Furthermore, entropic regularization unlocks efficient computational methods based on its convex dual formulation. The most prominent of these is the Sinkhorn algorithm \cite{Cuturi2013}, a remarkably efficient block coordinate ascent method. Its stability, particularly when implemented in the log-domain \cite{Schmitzer2019}, and its high amenability to parallelization on GPUs \cite{Feydy2020}, have made regularized OT a practical tool for large-scale problems and enabled new applications like differentiable OT layers in deep learning \cite{Soumava2022, Frogner2017} and the fast computation of Wasserstein barycenters \cite{Puccetti2018, Benamou2015}.

This paper focuses on the efficient numerical solution of the regularized semi-discrete optimal transport (RSOT) problem, specifically addressing the computational bottleneck associated with its dual formulation \cite{Genevay2016}. Our approach leverages a quasi-Newton optimization scheme, specifically the L-BFGS algorithm, which stands in contrast to first-order Sinkhorn-like methods and provides a practical balance between the robustness of entropic regularization and superlinear local convergence of second-order methods \cite{dieci2024solving, bansil2021newton,wu2025pins}. The choice of L-BFGS over simpler first-order algorithms is motivated by its potential for faster convergence, especially for small $\varepsilon$ values, while avoiding the computational overhead of computing and inverting the full Hessian matrix required by Newton's method. While many successful acceleration strategies have been applied to fully discrete-to-discrete optimal transport problems \cite{Schmitzer2019, Solomon2015, Quellmalz2022}, their adaptation to the semi-discrete setting within a finite element (FE) framework presents unique challenges. Such a framework is essential when the source measure $\mu$ is defined on a complex domain or represented by a PDE solution, requiring numerical integration over an underlying computational mesh.

The core difficulty lies in the evaluation step for the OT dual objective and its gradient. In principle, for each source integration point (or quadrature point $x_i$ used to approximate integrals involving $\mu$), one needs to compute interactions with all $N$ target points $y_j$ via the Gibbs kernel $K(x, y_j) = \exp(-c(x,y_j)/\varepsilon)$. When $N$ is large, and/or the number of source quadrature points required for accurate integration is high, the $O(N)$ cost per source point becomes prohibitive, severely limiting scalability. This computational pattern mirrors the challenges faced in classical N-body problems, such as gravitational or electrostatic simulations \cite{Hockney2021}. Consequently, several approaches have explored accelerating this computation, often drawing inspiration from N-body techniques like fast multipole methods (FMM) \cite{Greengard1987, Carrier1988} and tree codes \cite{Barnes1986}, which reduce complexity by grouping distant interactions and using approximations (e.g., multipole expansions or center-of-mass approximations).

Adapting these ideas to the exponentially decaying Gibbs kernel has been explored, primarily in the discrete OT setting, leading to algorithms with near-linear or log-linear complexity in $N$ under certain conditions \cite{Genevay2016, Solomon_convolutional_2015,Schmitzer2019}. However, efficiently integrating these accelerators within a semi-discrete finite element context, handling the continuous source measure $\mu$ and the associated numerical integration, requires careful algorithmic design.

The challenges of entropic regularization extend beyond computational aspects. While it introduces beneficial mathematical properties, it also creates a bias that diffuses the transport plan compared to unregularized solutions. This diffusion might be problematic in applications requiring precise feature preservation \cite{Manole2024, Feydy2019}. Moreover, as $\varepsilon$ approaches zero in pursuit of reduced bias, numerical stability becomes a concern: dual potentials may grow unbounded, creating computational challenges even in log-space implementations \cite{Schmitzer2019, Chizat_2016}. The number of Sinkhorn iterations required for convergence also typically increases as $\varepsilon$ decreases \cite{Genevay2018SampleCO}, highlighting the need for robust methods that remain efficient at small $\varepsilon$ values.

We present a numerical framework integrating several computational strategies to overcome these challenges:

\begin{enumerate}
    \item \textbf{Enhanced Dual Computation:} We combine distance-based interaction truncation with efficient spatial indexing through R-trees \cite{Guttman1984}. This approach exploits the natural decay of the Gibbs kernel for distant pairs, particularly effective at small $\varepsilon$ values \cite{Schmitzer2019}. The integration of R-trees enables rapid neighbor queries, reducing the computational complexity from $O(N)$ toward $O(\log N)$ per evaluation point, with potential for $O(1)$ complexity in favorable scenarios.

    \item \textbf{Hierarchical Processing:} We employ coarse-to-fine strategies using hierarchies of both the source domain mesh and the target point set, accelerating convergence by leveraging solutions from coarser problem representations. These multigrid-inspired approaches have proven effective for various OT settings (both discrete \cite{Liu2021} and continuous \cite{Haker2004, Papadakis2014}).

    \item \textbf{Regularization Scheduling:} We incorporate a method for annealing the regularization parameter $\varepsilon$ (also referred to as $\varepsilon$-scaling), conceptually similar to annealing schedules used in optimization \cite{Kirkpatrick1983} and related to $\varepsilon$-scaling in auction algorithms for assignment problems \cite{Bertsekas1992}. This is often beneficial for robustness, convergence speed, and obtaining solutions closer to the unregularized problem by starting with a larger, more stable $\varepsilon$ and gradually decreasing it.
\end{enumerate}

These components synergize to create a robust and efficient approach for solving RSOT problems, particularly beneficial for scenarios involving large target point sets and small regularization parameters. We have developed a high-performance C++ library, \texttt{SemiDiscreteOT}, which implements these methods using the \texttt{deal.II} finite element library \cite{DEALII} and is publicly available at \url{https://github.com/SemiDiscreteOT/SemiDiscreteOT}.

We demonstrate the efficacy of this framework through a series of demanding computational studies, each designed to highlight a different aspect of its capabilities. These include: a comprehensive benchmarking and scalability analysis to establish a quantitative baseline for performance; the computation of Wasserstein barycenters and the geometric registration of complex 3D vascular geometries, which showcase the solver's utility in nested, iterative algorithms for shape analysis; and finally, an application to blue noise sampling on a Riemannian manifold, which underscores the framework's ability to handle non-Euclidean costs and to solve for the optimal locations of the target measure itself.

The paper is organized as follows: Section \ref{sec:formulation} establishes the mathematical foundations of RSOT and its dual formulation, followed by Section \ref{sec:discretization} which details the finite element discretization. Our core numerical strategies are presented in Section \ref{sec:acceleration} (covering dual evaluation speedups and regularization scheduling) and Section \ref{sec:multilevel} (hierarchical methods). The comprehensive validation of our framework is presented in Section \ref{sec:numerical_experiments}, with the results from the aforementioned applications. We conclude in Section \ref{sec:conclusion} with a discussion of results and future research directions.

\section{Mathematical Formulation of RSOT}
\label{sec:formulation}
Optimal Transport provides a principled framework for comparing probability measures by quantifying the minimal cost required to transport mass from a source distribution to a target distribution. We focus here on the Kantorovich formulation and its computationally advantageous entropy-regularized variant, specifically tailored to the semi-discrete setting, which is common in applications ranging from computational geometry to the analysis of PDE-based data.

Let $\mu$ be a source probability measure defined on a domain $\X = \OmegaDomain \subseteq \R^d$. We assume $\mu$ has a density $\rho(x)$ with respect to the Lebesgue measure $\dd x$. Let $\nu = \sum_{j=1}^N \nu_j \delta_{y_j}$ be a discrete target probability measure supported on a finite set of points $\Y = \{y_1, \dots, y_N\} \subset \R^d$, with given positive weights $\nu_j$ summing to one ($\sum_{j=1}^N \nu_j = 1$). Let $c: \X \times \Y \to \R^+$ be a continuous cost function defining the cost of transporting a unit of mass from a point $x \in \X$ to a point $y_j \in \Y$. A canonical choice is the squared Euclidean distance, $c(x, y_j) = \frac{1}{2}\|x - y_j\|^2$.

The Kantorovich approach relaxes the search for a deterministic transport map to finding an optimal \emph{transport plan}, which is a probability measure on the product space $\X \times \Y$.

\begin{definition}[Admissible Transport Plans $\Pi(\mu, \nu)$]
The set of \emph{admissible transport plans} $\Pi(\mu, \nu)$ consists of probability measures $\pi \in \Prob(\X \times \Y)$ that satisfy the marginal constraints:
\begin{equation}
\pi(A \times \Y) = \mu(A) \quad \text{and} \quad \pi(\X \times \{y_j\}) = \nu_j,
\end{equation}
for all measurable sets $A \subseteq \X$ and for each $j=1, \dots, N$.
\end{definition}

\begin{remark}[Semi-Discrete Plans]
\label{rem:semi_discrete_plans}
In our semi-discrete context, any plan $\pi \in \Pi(\mu, \nu)$ must have the specific structure $\pi = \sum_{j=1}^N \pi_j \otimes \delta_{y_j}$, where each $\pi_j$ is a positive measure on $\X$. The marginal constraint on $\mu$ implies that each $\pi_j$ must be absolutely continuous with respect to $\mu$, i.e., it has a density $p_j(x)$ such that $\dd\pi_j(x) = p_j(x) \dd\mu(x)$. The marginal constraints then require that $\sum_{j=1}^N \pi_j = \mu$ (as measures on $\X$) and $\int_\X \dd\pi_j(x) = \nu_j$ for each $j$.
\end{remark}

The goal of the original OT problem is to find the plan with the lowest total transport cost.

\begin{problem}[Kantorovich OT Problem]
\label{eq:kantorovich_problem}
The Kantorovich formulation seeks an optimal plan $\pi^* \in \Pi(\mu, \nu)$ that minimizes the total transport cost:
\begin{equation}
C_K = \inf_{\pi \in \Pi(\mu, \nu)} \mathcal{C}_K(\pi) \defeq \inf_{\pi \in \Pi(\mu, \nu)} \int_{\X \times \Y} c(x, y) \dd\pi(x, y).
\end{equation}

Under mild assumptions, such an optimal plan $\pi^*$ exists. Furthermore, Kantorovich duality connects this primal minimization problem to an equivalent dual maximization problem involving potential functions.
\end{problem}

The minimal cost $C_K$ defines the optimal transport cost, which we denote by $\W_c(\mu, \nu)$. If the cost is the p-th power of a metric, $c(x,y) = d(x,y)^p$, then $(\W_c(\mu, \nu))^{1/p}$ is the standard p-Wasserstein distance.

While fundamental, solving the exact Kantorovich problem \eqref{eq:kantorovich_problem} can be computationally intensive. Entropy regularization offers a popular and effective alternative by adding a penalty term to the objective function, leading to smoother problems solvable with faster algorithms. This penalty is based on the Kullback-Leibler (KL) divergence relative to a reference measure, typically the product measure $\mu \otimes \nu$.

\begin{definition}[Kullback-Leibler Divergence]
Given two probability measures $\pi, \gamma$ on the same space, where $\pi$ is absolutely continuous with respect to $\gamma$ ($\pi \ll \gamma$), the Kullback-Leibler (KL) divergence is:
\begin{equation}
\KL(\pi \| \gamma) = \int \log\left(\frac{\dd\pi}{\dd\gamma}\right) \dd\pi.
\end{equation}
If $\pi$ is not absolutely continuous w.r.t. $\gamma$, $\KL(\pi \| \gamma) = +\infty$. In our setting, the reference measure is $\mu \otimes \nu = \sum_{j=1}^N \nu_j (\mu \otimes \delta_{y_j})$.
\end{definition}

\begin{problem}[Entropy-Regularized OT (Primal)]
With a regularization parameter $\varepsilon > 0$, the entropy-regularized OT problem seeks the plan $\pi_\varepsilon \in \Pi(\mu, \nu)$ that minimizes the regularized cost:
\begin{equation}
\label{eq:reg_primal_sec2_expanded}
C_\varepsilon = \inf_{\pi \in \Pi(\mu, \nu)} \left\{ \int_{\X \times \Y} c(x, y) \dd\pi(x, y) + \varepsilon \KL(\pi \| \mu \otimes \nu) \right\}.
\end{equation}
\end{problem}
The minimal regularized cost $C_\varepsilon$ is often denoted by $\W_{\varepsilon,c}(\mu,\nu)$, which we will use for the computation of Wasserstein barycenters.

\begin{remark}[Effect of Regularization]
The $\varepsilon \KL(\pi \| \mu \otimes \nu)$ term penalizes plans that deviate significantly from the product measure $\mu \otimes \nu$, which represents statistical independence. This regularization ensures that the optimal plan $\pi_\varepsilon$ is unique and absolutely continuous with respect to $\mu \otimes \nu$ (i.e., $\pi_\varepsilon \ll \mu \otimes \nu$), possessing a density. This smoothness is key to efficient computational methods.
\end{remark}

Similar to the unregularized case, the entropy-regularized problem admits a dual formulation, and strong duality holds, meaning the optimal values of the primal and dual problems coincide. The optimal value $C_\varepsilon$ of the regularized problem \eqref{eq:reg_primal_sec2_expanded} can be obtained by solving the dual problem:
\begin{equation}
\label{eq:reg_dual_general_expanded}
C_\varepsilon = \sup_{\phi, \psi} \left\{ \int_\X \phi \dd\mu + \int_\Y \psi \dd\nu - \varepsilon \int_{\X \times \Y} \exp\left(\frac{\phi(x) + \psi(y) - c(x, y)}{\varepsilon}\right) \dd\mu(x) \dd\nu(y) + \varepsilon \right\}.
\end{equation}
where the supremum is over suitable potential functions $\phi: \X \to \R$ and $\psi: \Y \to \R$. In the semi-discrete setting, $\psi$ is represented by a vector $\bpsi = (\psi_1, \dots, \psi_N) \in \R^N$, where $\psi_j = \psi(y_j)$. The dual problem specializes to:
\begin{multline}
\label{eq:semidisc_dual_expanded}
C_\varepsilon = \sup_{\substack{\phi \in L^1(\mu) \\ \bpsi \in \R^N}} \Bigg\{ \int_\X \phi(x) \rho(x) \dd x + \sum_{j=1}^N \psi_j \nu_j
 - \varepsilon \sum_{j=1}^N \nu_j \int_\X \exp\left(\frac{\phi(x) + \psi_j - c(x, y_j)}{\varepsilon}\right) \rho(x) \dd x + \varepsilon \Bigg\}.
\end{multline}

A cornerstone result relates the optimal primal plan $\pi_\varepsilon$ to the optimal dual potentials $(\phi^*, \bpsi^*)$ that achieve the supremum in \eqref{eq:semidisc_dual_expanded}. Specifically, the density of each component measure $\pi_j$ of the optimal plan is given by a Gibbs-Boltzmann-type formula:
\begin{equation}
\label{eq:reg_plan_uv_expanded}
\frac{d\pi_j}{d\mu}(x) = \nu_j \exp\left( \frac{\phi^*(x) + \psi_j^* - c(x,y_j)}{\varepsilon} \right).
\end{equation}

The marginal constraints impose relationships between the potentials. The source marginal constraint $\sum_{j=1}^N \dd\pi_j(x) = \dd\mu(x)$ implies that the continuous potential factor $\exp(\phi^*(x)/\varepsilon)$ can be expressed solely in terms of the discrete potentials $\bpsi^*$:
\begin{equation}
\label{eq:u_from_v_expanded}
\exp(\phi^*(x)/\varepsilon) = \frac{1}{\sum_{k=1}^N \nu_k \exp\left( (\psi^*_k - c(x, y_k)) / \varepsilon \right)}.
\end{equation}

Substituting this back, we obtain an explicit formula for the density of each component measure $\pi_j$ with respect to $\mu$:
\begin{equation}
\label{eq:final_plan_density}
p_j(x | \bpsi^*) \defeq \frac{d\pi_j}{d\mu}(x) = \frac{\nu_j \exp\left( (\psi^*_j - c(x, y_j)) / \varepsilon \right)}{\sum_{k=1}^N \nu_k \exp\left( (\psi^*_k - c(x, y_k)) / \varepsilon \right)}.
\end{equation}

\begin{remark}[Plan Density Interpretation]
The function $p_j(x | \bpsi^*)$ represents the conditional density (or proportion of mass) at source location $x$ that is transported to the target point $y_j$ under the optimal regularized plan. By construction, $\sum_{j=1}^N p_j(x | \bpsi^*) = 1$ for all $x$, ensuring the source marginal constraint is satisfied locally. The optimal discrete potentials $\bpsi^*$ are implicitly determined by the target marginal constraints: $\int_\X p_j(x | \bpsi^*) \dd\mu(x) = \nu_j$ for all $j=1, \dots, N$.
\end{remark}

Crucially, the dual problem \eqref{eq:semidisc_dual_expanded} can be simplified. The optimization over the infinite-dimensional potential $\phi$ can be carried out analytically for any fixed discrete potential vector $\bpsi$. Plugging the expression for the optimal $\exp(\phi^*(x)/\varepsilon)$ from \eqref{eq:u_from_v_expanded} (which depends only on $\bpsi$) back into the dual objective \eqref{eq:semidisc_dual_expanded} results in a finite-dimensional optimization problem solely over $\bpsi \in \R^N$. This reduction is the key to making the problem computationally tractable, as it transforms an infinite-dimensional optimization problem over functions into a finite-dimensional one over the vector $\bpsi$. The reduced dual problem is equivalent to minimizing the following convex functional $J_\varepsilon(\bpsi)$ over $\R^N$ (see \cite{peyre2019computational} for the derivation):
\begin{equation}
\label{eq:dual_functional_to_minimize_redux}
\inf_{\bpsi \in \R^N} J_\varepsilon(\bpsi) \defeq \inf_{\bpsi \in \R^N} \left\{ \int_{\OmegaDomain} \varepsilon \log\left(\sum_{j=1}^N \nu_j \exp((\psi_j - c(x, y_j))/\varepsilon)\right) \rho(x) \dd x - \sum_{j=1}^N \psi_j \nu_j \right\}.
\end{equation}

The optimal $\bpsi^*$ minimizing $J_\varepsilon$ corresponds to the discrete potentials of the optimal dual solution.

\begin{remark}[Connection to Power Diagrams and Log-Sum-Exp Partitions]
It is crucial to connect our regularized problem to the geometry of the unregularized case. For the squared Euclidean cost, the solution to the unregularized SOT problem is given by a \emph{power diagram}, a partition of the source domain $\Omega$ into cells where each cell consists of points closest to a given target site $y_j$ in a weighted sense \cite{aurenhammer1998minkowski}. The weights of this diagram are precisely the unregularized dual potentials, and finding them is equivalent to a non-smooth convex minimization problem \cite{levy2015numerical}.

Our objective functional $J_\varepsilon(\bpsi)$ can be seen as a smooth approximation of this problem. The term $\varepsilon \log(\sum_{j} \exp(\dots))$ is a well-known smooth approximation of the maximum function (the "log-sum-exp" trick). However, this specific functional form has a deeper interpretation from the perspective of discrete choice theory \cite{Takesen2022}. It can be understood as the expected maximum utility for a population of agents at location $x$ choosing among options $\{y_j\}$, where their perceived utility is perturbed by i.i.d. Gumbel noise. Other choices for this noise distribution, corresponding to different ambiguity sets, yield alternative regularization schemes.

Therefore, minimizing $J_\varepsilon(\bpsi)$ is a smooth counterpart to finding the optimal power diagram weights, where the smoothing has a clear probabilistic motivation. The resulting transport plan density $p_j(x | \bpsi^*)$ from Eq. \eqref{eq:final_plan_density} defines a "soft" or "log-partition" of the domain, which converges to the discontinuous partition of the power diagram as $\varepsilon \to 0$.
\end{remark}

To minimize $J_\varepsilon(\bpsi)$ using gradient-based optimization algorithms, we need its gradient with respect to the components of $\bpsi$. Differentiating the functional $J_\varepsilon(\bpsi)$ defined in \eqref{eq:dual_functional_to_minimize_redux} with respect to a component $\psi_k$ yields:
\begin{align}
\frac{\partial J_\varepsilon}{\partial \psi_k} &= \int_{\OmegaDomain} \frac{\nu_k \exp((\psi_k - c(x, y_k))/\varepsilon)}{\sum_{j=1}^N \nu_j \exp((\psi_j - c(x, y_j))/\varepsilon)} \dd\mu(x) - \nu_k \nonumber \\
&= \int_{\OmegaDomain} p_k(x | \bpsi) \dd\mu(x) - \nu_k \label{eq:gradient_compact_expanded} \\
&= \int_\X \dd\pi_k(x) - \nu_k . \nonumber
\end{align}

The first-order optimality condition for minimizing $J_\varepsilon(\bpsi)$ is $\nabla J_\varepsilon(\bpsi^*) = 0$. Equation \eqref{eq:gradient_compact_expanded} shows that this condition is exactly $\int p_k(x | \bpsi^*) \dd\mu(x) = \nu_k$ for all $k=1, \dots, N$. This means that finding the minimum of the reduced dual functional $J_\varepsilon$ enforces the target marginal constraints for the optimal regularized transport plan $\pi_\varepsilon$. Therefore, the computational task for solving RSOT effectively boils down to minimizing the convex functional $J_\varepsilon(\bpsi)$ over $\R^N$. This is typically achieved using iterative methods (like gradient descent or, as used in this work, L-BFGS) that require repeated evaluation of the gradient \eqref{eq:gradient_compact_expanded}, which in turn involves computing integrals over the source domain $\OmegaDomain$.
\section{Finite Element Discretization and Numerical Integration}
\label{sec:discretization}

The computational core of solving the RSOT problem lies in minimizing the convex functional $J_\varepsilon(\bpsi)$ defined in \eqref{eq:dual_functional_to_minimize_redux} over $\R^N$.
We employ the finite element method (FEM) for spatial discretization and numerical quadrature for integral approximation.

\subsection{Finite Element Approximation}
Let $\OmegaDomain \subset \R^d$ be the source domain. We introduce a computational mesh $\calT_h$ consisting of non-overlapping cells $K$ (e.g., triangles or tetrahedra) such that $\overline{\OmegaDomain} = \cup_{K \in \calT_h} \overline{K}$. Let $h = \max_{K \in \calT_h} \text{diam}(K)$ denote the mesh size parameter.
As previously stated, the source measure $\mu$ has a density $\rho(x)$ with respect to the Lebesgue measure. We approximate this density using a finite element function $\rho_h(x)$ belonging to a suitable finite element space $V_h \subset L^2(\OmegaDomain)$ defined over $\calT_h$. For instance, $V_h$ could be the space of continuous piecewise polynomials of degree $k$ ($P_k$ Lagrange elements):
\begin{equation}
V_h = \{ v \in C^0(\overline{\OmegaDomain}) : v|_K \in P_k(K) \quad \forall K \in \calT_h \}.
\end{equation}
The approximation $\rho_h$ can be obtained, for example, by $L^2$-projection of $\rho$ onto $V_h$ or by interpolation at the finite element nodes if $\rho$ is sufficiently regular. Standard finite element theory \cite{CiarletFEM, BrennerScottFEM} provides error estimates. If $\rho$ belongs to the Sobolev space $H^{k+1}(\OmegaDomain)$, then under suitable mesh regularity assumptions, the $L^2$ error satisfies:
\begin{equation}
\|\rho - \rho_h\|_{L^2(\OmegaDomain)} \le C h^{k+1} |\rho|_{H^{k+1}(\OmegaDomain)},
\end{equation}
where $C$ is a constant independent of $h$ and $\rho$.

\subsection{Numerical Quadrature}
Integrals of the form $\int_{\OmegaDomain} f(x) \dd\mu(x) = \int_{\OmegaDomain} f(x)\rho(x)\dd x$ are approximated by summing contributions from each cell $K \in \calT_h$, using a numerical quadrature rule:
\begin{equation}
\int_{\OmegaDomain} f(x) \rho(x) \dd x \approx \sum_{K \in \calT_h} \int_K f(x) \rho_h(x) \dd x \approx \sum_{K \in \calT_h} \sum_{q=1}^{N_{q,K}} f(x_q^K) \rho_h(x_q^K) w_q^K,
\end{equation}
where $\{x_q^K\}_{q=1}^{N_{q,K}}$ are the quadrature points within cell $K$ and $\{w_q^K\}_{q=1}^{N_{q,K}}$ are the corresponding positive quadrature weights. The choice of quadrature rule (e.g., Gaussian quadrature) is crucial for accuracy. The rule should be chosen to integrate the terms appearing in the functional and gradient sufficiently accurately. Let $N_q = \sum_{K \in \calT_h} N_{q,K}$ be the total number of quadrature points across the mesh; this number directly impacts the computational cost of evaluating the functional and gradient, as discussed in the following section.

\begin{remark}[Computational Scale Implied by Quadrature]
\label{rem:quadrature_scale}
It is instructive to consider the computational scale implied by the numerical quadrature. Discretizing the integral over the source domain $\X$ using the quadrature rule $\int_{\OmegaDomain} f(x) \rho(x) \dd x \approx \sum_{K \in \calT_h} \sum_{q=1}^{N_{q,K}} f(x_q^K) \rho_h(x_q^K) w_q^K$ effectively transforms the continuous source measure $\mu = \rho(x) \dd x$ into an approximate discrete measure $\tilde{\mu}_h = \sum_{q=1}^{N_q} \mu_q \delta_{x_q}$, where $x_q$ represents the $q$-th global quadrature point and $\mu_q = \rho_h(x_q) w_q$ is its associated mass, where $w_q$ is the global quadrature weight, incorporating the cell-specific weight from the original rule.

Consequently, the semi-discrete problem $\mu \to \nu$ (where $\nu = \sum_{j=1}^N \nu_j \delta_{y_j}$) is recast, through this naive discretization approach, into a fully discrete optimal transport problem $\tilde{\mu}_h \to \nu$. This resulting discrete problem involves transporting mass between $N_q$ source points $\{x_q\}_{q=1}^{N_q}$ and $N$ target points $\{y_j\}_{j=1}^N$. The crucial point is that the total number of quadrature points, $N_q = |\calT_h| \times (\text{average points per cell})$, can become extremely large. For fine meshes $|\calT_h|$ or higher-order quadrature rules needed for accuracy, $N_q$ can easily reach millions, often vastly exceeding the number of target points $N$. Solving such a massive discrete-discrete OT problem directly using standard algorithms is computationally demanding.
\end{remark}

\subsection{Discretized Functional and Gradient}
Applying both the finite element approximation $\rho_h$ for the density and numerical quadrature for the integration, we obtain the fully discretized dual functional $J_\varepsilon^h(\bpsi)$:
\begin{equation}
\label{eq:discretized_functional}
J_\varepsilon^h(\bpsi) = \sum_{K \in \calT_h} \sum_{q=1}^{N_{q,K}} \varepsilon \log\left(\sum_{j=1}^N \nu_j \exp\left(\frac{\psi_j - c(x_q^K, y_j)}{\varepsilon}\right)\right) \rho_h(x_q^K) w_q^K - \sum_{j=1}^N \psi_j \nu_j.
\end{equation}
Similarly, the $k$-th component of the gradient $\nabla J_\varepsilon^h(\bpsi)$ is approximated as:
\begin{equation}
\label{eq:discretized_gradient}
\frac{\partial J_\varepsilon^h}{\partial \psi_k} = \sum_{K \in \calT_h} \sum_{q=1}^{N_{q,K}} p_k(x_q^K | \bpsi) \rho_h(x_q^K) w_q^K - \nu_k,
\end{equation}
where $p_k(x_q^K | \bpsi)$ is the plan density factor from \eqref{eq:final_plan_density}, evaluated at the quadrature point $x_q^K$ using the current potential vector $\bpsi$.

\subsection{Convergence and Optimization}
The minimization of the discretized functional $J_\varepsilon^h(\bpsi)$ yields an approximate potential vector $\bpsi^*_h$. The accuracy of this approximation depends on both the finite element error and the quadrature error.
The total error $|J_\varepsilon(\bpsi) - J_\varepsilon^h(\bpsi)|$ arises from these two sources. The quadrature error depends on the degree of polynomial precision of the quadrature rule and the smoothness of the integrand, which involves $\rho_h(x)$ and the $\log(\sum \dots)$ term. Typically, if the quadrature rule is chosen to integrate products of polynomials arising from $\rho_h$ and the basis functions exactly up to a sufficient degree, the overall error is dominated by the finite element approximation error.
Standard error analysis for FEM \cite{BrennerScottFEM} suggests that, under appropriate smoothness assumptions on $\rho$ and the domain $\OmegaDomain$, and for a sufficiently accurate quadrature rule, the error in the functional and its gradient converges to zero as $h \to 0$:
\begin{theorem}(Convergence of minimizers of dual regularized SOT)
\label{theo:conv_minimizers}
    Let the previous assumptions on the source $\mu$ and target $\nu$ measures be valid. We consider the dual functionals $J_{\varepsilon}:\mathbb{R}^N\rightarrow\mathbb{R}$ and $J^{h, k, r}_{\varepsilon}:\mathbb{R}^N\rightarrow\mathbb{R}$, for which the dependence on the discretization step $h$, polynomial order of discretization of $\rho_h$ with finite elements $k$, and order of the Gaussian quadrature rule $r$ are displayed. Let us additionally assume that $c:\Omega\times\Omega\rightarrow\mathbb{R}$ is sufficiently regular, such that the following integrands, for all $i\in\{1,\dots N\}$,
    \begin{equation}
    \label{eq:fandg}
        f(x)=\varepsilon\log\left(\sum_{j=1}^N \nu_j \exp\left(\psi_j - c(x, y_j)/\varepsilon\right)\right),\quad g_i(x) = \frac{\nu_k \exp((\psi_i - c(x, y_i))/\varepsilon)}{\sum_{j=1}^N \nu_j \exp((\psi_j - c(x, y_j))/\varepsilon)},
    \end{equation}
    are at least $\max(k+1, 2r-k)$ differentiable, $f,g_i\in\mathcal{C}^{\max(k+1, 2r-k)}$, $\forall i\in\{1,\dots N\}$. Then, the pointwise convergence of the functionals
    \begin{equation}
|J_\varepsilon(\bpsi) - J_\varepsilon^{h,k,r}(\bpsi)| \overset{h \to 0}{\rightarrow} 0, \qquad
\|\nabla J_\varepsilon(\bpsi) - \nabla J_\varepsilon^{h,k,r}(\bpsi)\|_2 \overset{h \to 0}{\rightarrow} 0,\quad\forall\bpsi\in\mathbb{R}^N,
\end{equation}
implies that the unique minimizer (up to constants) $\bpsi_{\varepsilon}^{h,k,r}$ of $J^{h,k,r}$ converges to the unique minimizer $\bpsi_{\varepsilon}$ of $J_{\varepsilon}$ in Euclidean norm,$$\bpsi^{h,k,r}_{\varepsilon}\overset{h \to 0}{\rightarrow}\bpsi_{\varepsilon}.$$
\end{theorem}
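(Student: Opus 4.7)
The plan is to exploit the fact that $J_\varepsilon$ is translation-invariant along the diagonal $\mathbb{R}\mathbf{1}$ (a direct computation using $\sum_j \nu_j = 1$ shows $J_\varepsilon(\bpsi + c\mathbf{1}) = J_\varepsilon(\bpsi)$) and that the same invariance holds for each $J_\varepsilon^{h,k,r}$. I would first fix a representative by restricting to the hyperplane $H = \{\bpsi \in \R^N : \sum_j \psi_j = 0\}\cong \R^{N-1}$, so that $\bpsi_\varepsilon$ and $\bpsi_\varepsilon^{h,k,r}$ denote the unique minimizers within $H$. The overall strategy then combines strong convexity of $J_\varepsilon$ on bounded sets of $H$ with a first-order stability argument that converts the assumed pointwise convergence of the gradients into a bound on the distance between minimizers.

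The second step is to verify strong convexity on $H$. Differentiating Eq.\ \eqref{eq:gradient_compact_expanded} yields the Hessian
\begin{equation*}
\frac{\partial^2 J_\varepsilon}{\partial \psi_k \partial \psi_\ell}(\bpsi) = \frac{1}{\varepsilon}\int_{\OmegaDomain} \bigl(\delta_{k\ell}\, p_k(x\mid \bpsi) - p_k(x\mid \bpsi)\, p_\ell(x\mid \bpsi)\bigr)\, \rho(x)\, \dd x,
\end{equation*}
which is the $\mu$-average of the covariance matrix of the categorical law $(p_j(x\mid\bpsi))_j$; it is positive semidefinite with kernel exactly $\mathbb{R}\mathbf{1}$, so on every ball $B_R\subset H$ its smallest eigenvalue admits a uniform lower bound $\lambda(R)>0$. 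An asymptotic analysis of the log-sum-exp term along any unit direction $\mathbf{e}\in H$ gives $J_\varepsilon(t\mathbf{e}) \sim t\bigl(\max_j e_j - \langle \mathbf{e},\bnu\rangle\bigr)$, and since $\nu_j>0$ for all $j$ this coefficient is strictly positive on the unit sphere of $H$ and hence bounded below by some $\alpha>0$ by compactness. Consequently $J_\varepsilon$ is coercive on $H$; an analogous estimate for $J_\varepsilon^{h,k,r}$ (which has the same structure with $\rho$ replaced by $\rho_h$) together with the pointwise convergence $J_\varepsilon^{h,k,r}(\bpsi_\varepsilon)\to J_\varepsilon(\bpsi_\varepsilon)$ confines the discrete minimizers $\bpsi_\varepsilon^{h,k,r}$ in a common ball $B_R\subset H$ for all sufficiently small $h$.

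The main estimate then follows from first-order optimality. Since $\nabla J_\varepsilon(\bpsi_\varepsilon)=0$ and $\nabla J_\varepsilon^{h,k,r}(\bpsi_\varepsilon^{h,k,r})=0$ (viewed as vectors in $H$), strong convexity on $B_R$ yields
\begin{equation*}
\lambda(R)\,\|\bpsi_\varepsilon^{h,k,r} - \bpsi_\varepsilon\|^2 \le \langle \nabla J_\varepsilon(\bpsi_\varepsilon^{h,k,r}) - \nabla J_\varepsilon(\bpsi_\varepsilon),\, \bpsi_\varepsilon^{h,k,r} - \bpsi_\varepsilon\rangle = \langle \nabla J_\varepsilon(\bpsi_\varepsilon^{h,k,r}) - \nabla J_\varepsilon^{h,k,r}(\bpsi_\varepsilon^{h,k,r}),\, \bpsi_\varepsilon^{h,k,r} - \bpsi_\varepsilon\rangle,
\end{equation*}
and Cauchy--Schwarz gives $\lambda(R)\|\bpsi_\varepsilon^{h,k,r} - \bpsi_\varepsilon\|\le \|\nabla J_\varepsilon(\bpsi_\varepsilon^{h,k,r}) - \nabla J_\varepsilon^{h,k,r}(\bpsi_\varepsilon^{h,k,r})\|_2$. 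The theorem then reduces to showing that the right-hand side tends to zero.

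The hard part will be this final reduction, because the hypothesis supplies only \emph{pointwise} convergence of the gradients, yet the estimate above requires evaluation at the moving argument $\bpsi_\varepsilon^{h,k,r}$. The natural tool is the classical convex-analysis result that pointwise convergence of a sequence of finite convex functions on an open subset of $\R^{N-1}$ automatically upgrades to locally uniform convergence, and, since the limit $J_\varepsilon$ is $C^1$ on $H$, the associated gradients also converge uniformly on compact sets (via standard epi-convergence of subdifferentials for convex functions). Applied to the compact set $B_R\subset H$, this gives $\sup_{\bpsi\in B_R}\|\nabla J_\varepsilon(\bpsi) - \nabla J_\varepsilon^{h,k,r}(\bpsi)\|_2 \to 0$, closing the argument and yielding $\bpsi_\varepsilon^{h,k,r}\to\bpsi_\varepsilon$ in Euclidean norm.
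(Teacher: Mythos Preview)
Your argument is correct but follows a genuinely different route from the paper's. The paper's proof is in the $\Gamma$-convergence spirit: after restricting to the hyperplane $V=\{\sum_j \psi_j\nu_j=0\}$ (you use $\{\sum_j\psi_j=0\}$, which is an equivalent choice of transversal), it establishes equi-coercivity of the family $\{J_\varepsilon^{h,k,r}\}_h$ via the same log-sum-exp lower bound you use, deduces boundedness of the minimizers, and then argues that convexity together with equi-coercivity yields equi-continuity; Ascoli--Arzel\`a then upgrades pointwise to uniform convergence of the \emph{functionals} on compacts, and a standard cluster-point argument shows any limit is the unique minimizer of $J_\varepsilon$. No Hessian, no strong convexity, and no gradient convergence is invoked.

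Your approach is more quantitative: you compute the Hessian, extract a uniform strong-convexity modulus $\lambda(R)$ on balls in $H$, and convert the first-order optimality conditions into the stability bound $\lambda(R)\|\bpsi_\varepsilon^{h,k,r}-\bpsi_\varepsilon\|\le\|\nabla J_\varepsilon-\nabla J_\varepsilon^{h,k,r}\|_2$ evaluated at the moving point. You then close with the Rockafellar-type theorem that pointwise convergence of finite convex functions implies locally uniform convergence, hence uniform gradient convergence on compacts when the limit is $C^1$. This buys you an explicit rate in terms of the gradient discretization error, which the paper's compactness argument does not provide; conversely, the paper's proof needs only convexity rather than strong convexity. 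Both routes ultimately rely on convexity to promote pointwise to locally uniform convergence, but they apply it at different levels (functional values versus gradients).
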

\begin{proof}
    The proof is reported in the Appendix~\ref{appendix:proof}.
\end{proof}
The auxiliary functions $f,g$ defined in Equation~\eqref{eq:fandg} are $\mathcal{C}^{\infty}$ for $c(x, y)=\tfrac{1}{p}\lVert x-y\rVert^p_p$ with $p\in\mathbb{N}$ even integer.
In practice, we solve the finite-dimensional convex optimization problem $\min_{\bpsi \in \R^N} J_\varepsilon^h(\bpsi)$ using an iterative quasi-Newton method, specifically the L-BFGS algorithm. The choice of a second-order-like method over simpler first-order algorithms (like Sinkhorn) is motivated by the potential for faster convergence, especially for small $\varepsilon$. L-BFGS offers a practical compromise, avoiding the need to compute and invert the full Hessian, which can be a significant challenge for Newton's method in this context \cite{kitagawa2019convergence,wu2025pins}. The efficiency of each L-BFGS iteration, however, depends critically on the fast evaluation of the functional and its gradient, making this the primary computational challenge that we will address in the following sections.

\begin{remark}[Choice of Stopping Criterion for Optimization]
\label{rem:stopping_criterion}
The L-BFGS algorithm iteratively minimizes the discretized dual functional $J_\varepsilon^h(\bpsi)$. The stopping criterion is typically based on the norm of the gradient $\nabla J_\varepsilon^h(\bpsi)$.

Recall the $k$-th component of the gradient (Eq. \eqref{eq:discretized_gradient}):
\[
\frac{\partial J_\varepsilon^h}{\partial \psi_k} = \left( \sum_{K \in \calT_h} \sum_{q=1}^{N_{q,K}} p_k(x_q^K | \bpsi) \rho_h(x_q^K) w_q^K \right) - \nu_k = \tilde{\nu}_k(\bpsi) - \nu_k.
\]
Here, $\tilde{\nu}_k(\bpsi)$ represents the total mass computed via quadrature that is assigned to the target point $y_k$ given the current potential vector $\bpsi$, while $\nu_k$ is the prescribed target mass. The gradient component $\frac{\partial J_\varepsilon^h}{\partial \psi_k}$ thus measures the violation of the $k$-th target marginal constraint in the discretized sense. One possible criterion involves the $L^\infty$ norm (maximum absolute value):
\[
\left\| \nabla J_\varepsilon^h(\bpsi) \right\|_\infty = \max_{k} |\tilde{\nu}_k(\bpsi) - \nu_k|.
\]
Terminating when $\left\| \nabla J_\varepsilon^h(\bpsi) \right\|_\infty \le \delta_{\text{tol},\infty}$ means stopping when the \emph{maximum absolute error} in satisfying any single target marginal constraint falls below the tolerance $\delta_{\text{tol},\infty}$. This criterion provides strong, uniform control over the absolute mass conservation error for each individual target point.

The criterion used in this work employs the $L^1$ norm of the gradient. We terminate the iterations when the $L^1$ norm falls below a predefined tolerance $\delta_{\text{tol},1}$:
\[
\left\| \nabla J_\varepsilon^h(\bpsi) \right\|_1 = \sum_{k=1}^N \left| \frac{\partial J_\varepsilon^h}{\partial \psi_k} \right| = \sum_{k=1}^N |\tilde{\nu}_k(\bpsi) - \nu_k| \le \delta_{\text{tol},1}.
\]
The $L^1$ stopping criterion measures the \emph{total absolute error} in satisfying the target marginal constraints across all target points combined. This provides a robust and global measure of convergence.
\end{remark} %
\section{Accelerating the Dual Evaluation}
\label{sec:acceleration}

A direct evaluation of the discretized functional $J_\varepsilon^h(\bpsi)$ \eqref{eq:discretized_functional} and gradient $\nabla J_\varepsilon^h(\bpsi)$ \eqref{eq:discretized_gradient} presents a significant computational bottleneck. For each of the $N_q$ quadrature points $x_q^K$, computing the sums $S(x_q^K, \bpsi) = \sum_{j=1}^N \nu_j \exp\left(\frac{\psi_j - c(x_q^K, y_j)}{\varepsilon}\right)$ requires $O(N)$ operations (evaluating the exponential term and summing). The total cost per evaluation is therefore $O(N_q N)$, which becomes prohibitive for large numbers of target points $N$ or high-accuracy quadrature (large $N_q$).

To overcome this challenge, we employ acceleration techniques that exploit the structure of the problem, particularly the rapid decay of the Gibbs kernel $K_\varepsilon(x, y) = \exp\left(-\frac{c(x,y)}{\varepsilon}\right)$ for points $x, y$ that are far apart in terms of the cost function $c(\cdot, \cdot)$ relative to $\varepsilon$. This kernel exhibits exponential decay as $c(x,y)$ increases, with the rate of decay controlled by the regularization parameter $\varepsilon$.

\subsection{Adaptive Truncation Strategies}
\label{subsubsec:truncation_revised}

The core idea is to approximate the full sum $S(x, \bpsi)$ by truncating it, including only contributions from target points $y_j$ that are sufficiently close to the source point $x$ in terms of the cost function $c(x, y_j)$. We neglect terms corresponding to distant $y_j$ whose contribution $\nu_j \exp\left(\frac{\psi_j - c(x, y_j)}{\varepsilon}\right)$ is below a certain threshold.

One simple approach defines a pointwise contribution threshold $\delta_{\text{thr}} > 0$. A term is neglected if:
\begin{equation} \label{eq:pointwise_criterion_revised}
\nu_j \exp\left(\frac{\psi_j - c(x, y_j)}{\varepsilon}\right) < \delta_{\text{thr}}.
\end{equation}
This condition can be used to define a truncation threshold. To facilitate fast spatial searching, we can use a single cost cutoff $C_{\text{pw}}$. Letting $M = \max_k \psi_k$ and $\overline{\nu} = \max_k \nu_k$, we define $C_{\text{pw}}$ such that $c(x, y_j) \ge C_{\text{pw}}$ guarantees condition \eqref{eq:pointwise_criterion_revised} holds:
\begin{equation} \label{eq:cost_cutoff_delta_revised}
C_{\text{pw}}(\bpsi, \delta_{\text{thr}}) = M + \varepsilon \ln\left( \frac{\overline{\nu}}{\delta_{\text{thr}}} \right).
\end{equation}
However, choosing an appropriate value for the hyperparameter $\delta_{\text{thr}}$ requires careful tuning, as its direct relationship to the overall accuracy of the computed functional $J_\varepsilon^h(\bpsi)$ is not immediately obvious and may depend strongly on problem parameters ($\varepsilon$, distribution of $\nu_j$, range of $\bpsi$).

To achieve a more predictable and theoretically grounded control over the approximation error, we can determine the truncation cost cutoff $C$ to ensure that the \emph{global relative error} introduced in the functional $J_\varepsilon(\bpsi)$ remains below a user-defined tolerance $\tau > 0$. This avoids the ambiguity of choosing $\delta_{\text{thr}}$ and directly controls the error in the quantity being minimized. As derived in Appendix~\ref{app:truncation_derivation}, the condition $\|J_\varepsilon(\bpsi) - \widetilde{J}_\varepsilon(\bpsi)\| \le \tau \|J_\varepsilon(\bpsi)\|$ can be guaranteed by choosing $C$ sufficiently large. Two main bounds for the required cost cutoff can be derived.

First, the \emph{integrated bound} provides a cost cutoff based on the truncated sum structure. The required truncation threshold $C_{\text{int}}$ must satisfy:
\begin{equation}
\label{eq:truncation_cutoff_int}
C_{\text{int}}(\bpsi, \varepsilon, \tau, D) \ge M + \varepsilon \ln\left( \frac{\varepsilon D(\bpsi, C)}{\tau |J_\varepsilon(\bpsi)|} \right),
\end{equation}
where $M = \max_k \psi_k$ represents the maximum potential, $D(\bpsi, C)$ is the integrated inverse of the truncated sum, and $S_{trunc}(x, \bpsi; C)$ denotes the truncated sum itself:
\begin{equation}
\begin{aligned}
S_{trunc}(x, \bpsi; C) &= \sum_{j: c(x, y_j) < C} \nu_j \exp\left(\frac{\psi_j - c(x, y_j)}{\varepsilon}\right), \\
D(\bpsi, C) &= \int_{\OmegaDomain} [S_{trunc}(x, \bpsi; C)]^{-1} d\mu(x).
\end{aligned}
\end{equation}
Note that equation \eqref{eq:truncation_cutoff_int} defines an implicit nonlinear equation for $C_{\text{int}}$, since $D(\bpsi, C)$ itself depends on $C$ through the truncated sum $S_{trunc}(x, \bpsi; C)$. In practice, $D(\bpsi, C)$ can be estimated during the optimization process by using its value computed with the cost cutoff from the previous iteration.

Second, by employing a worst-case estimate for the truncated sum, we obtain the \emph{geometric bound} $C_{\text{geom}}$, which does not require estimating $D$. Let $C_0 = \max_{x \in \OmegaDomain} \min_{j} c(x, y_j)$ be the covering cost, the minimum potential $m = \min_k \psi_k$, and the minimum weight $\underline{\nu} = \min_j \nu_j$. Assuming normalized measures ($\int_\OmegaDomain d\mu = 1$, $\sum \nu_j = 1$), this truncation threshold must satisfy (see Appendix~\ref{app:truncation_derivation}):
\begin{equation}
\label{eq:truncation_cutoff_geom_revised}
C_{\text{geom}}(\bpsi, \varepsilon, \tau) \ge C_0 + \Gamma(\bpsi) + \varepsilon \ln\left( \frac{\varepsilon}{\underline{\nu} \tau |J_\varepsilon(\bpsi)|} \right),
\end{equation}
where $\Gamma(\bpsi) = M-m$ is the range of the potentials. This truncation bound $C_{\text{geom}}$ adapts during the optimization as $\bpsi$ (and thus $M$, $m$, $\Gamma$, and $J_\varepsilon(\bpsi)$) evolves. It provides a robust way to control the global relative error without needing runtime estimates of integrals.

To efficiently find the set of target points $I_C(x) = \{j : c(x, y_j) < C_{\text{trunc}}\}$ for each quadrature point $x=x_q^K$, we can use a spatial data structure. If the cost $c(x,y)$ is a metric (e.g., Euclidean distance), spatial trees like R-trees \cite{Guttman1984} allow range queries to be performed much faster than a linear scan. When using spatial trees with metric costs, the query time is typically $O(\log N + N_{eff}(x))$, where $N_{eff}(x) = |I_C(x)|$ is the number of points found. When $N_{eff}(x)$ is small, this significantly reduces the cost compared to $O(N)$.

\begin{algorithm}[ht!]
    \caption{Accelerated Iterative Solver for the RSOT Dual Problem}
    \label{alg:accelerated_rsot_solver}
    \begin{algorithmic}[1]
    \Require Source mesh $\calT_h$, density $\rho_h$, quadrature $\{x_q, w_q\}_{q=1}^{N_q}$, target $\{y_j, \nu_j\}_{j=1}^N$, cost function $c(\cdot, \cdot)$, $\varepsilon > 0$, tolerance $\delta_{\text{tol}}$.
    \Ensure Optimal potential vector $\bpsi^* \in \R^N$.

    \Repeat \ While $\|\nabla J_{\text{current}}\|_1 > \delta_{\text{tol}}$.

        \State Perform L-BFGS step that include a line search procedure requiring multiple functional $J_{\text{current}}$ and gradient $\nabla J_{\text{current}}$ evaluations at current potential $\bpsi^{(k)}$ via Algorithm~\ref{alg:gradeval}: $$\bpsi^{(k+1)} \leftarrow \text{L-BFGSstep}(\bpsi^{(k)}, \nabla J_{\text{current}}, J_{\text{current}}).$$
        \State $k \leftarrow k + 1$.
    \Until{convergence}
    \end{algorithmic}
\end{algorithm}

\begin{algorithm}[ht!]
\caption{Truncated RSOT gradient and functional evaluation}
    \label{alg:gradeval}
    \begin{algorithmic}[1]
        \Require Source mesh $\calT_h$, density $\rho_h$, quadrature $\{x_q, w_q\}_{q=1}^{N_q}$, target $\{y_j, \nu_j\}_{j=1}^N$, cost function $c(\cdot, \cdot)$, $\varepsilon > 0$, potential $\bpsi$.
        \Ensure Functional $J_{\text{current}}$ and gradient $\nabla J_{\text{current}}$ evaluations.
        \ForAll{quadrature points $x_q$ with weight $w_q$}
            \State $I_q \leftarrow \{j : c(x_q, y_j) < C_{\text{trunc}}\}$ via spatial index query.
            \State $S_q \leftarrow \sum_{j \in I_q} \nu_j \exp((\psi_j - c(x_q, y_j))/\varepsilon)$.

            \If{$S_q > 0$}
                \State $J_{\text{current}} \leftarrow J_{\text{current}} + \varepsilon \ln(S_q) \rho_h(x_q) w_q$.
                \ForAll{$j \in I_q$}
                     \State $T_{qj} \leftarrow \nu_j \exp((\psi_j - c(x_q, y_j))/\varepsilon)$.
                     \State $\nabla J_{\text{current}}[j] \leftarrow \nabla J_{\text{current}}[j] + \frac{T_{qj}}{S_q} \rho_h(x_q) w_q$.
                \EndFor
            \EndIf
        \EndFor

        \State $J_{\text{current}} \leftarrow J_{\text{current}} - \sum_{j=1}^N \psi_j \nu_j$.
        \State $\nabla J_{\text{current}} \leftarrow \nabla J_{\text{current}} - \bnu$.
    \end{algorithmic}
\end{algorithm}
\begin{remark}[Parallelization Strategy]
\label{rem:parallelization}
We employ a two-level parallelization scheme for efficient computation. Firstly, using domain decomposition via MPI, the source domain $\OmegaDomain$ and its corresponding mesh $\calT_h$ are partitioned among multiple MPI processes. Each process is responsible for computing the contributions to $J_\varepsilon^h$ and $\nabla J_\varepsilon^h$ arising from its assigned cells. Secondly, within each MPI process, shared-memory parallelism via OpenMP is used to parallelize the loop over the locally owned cells and their quadrature points. To facilitate this, the R-tree index and the target point data ($\{y_j, \nu_j\}$), as well as the potential vector $\bpsi$, are replicated on each MPI process, allowing concurrent, communication-free spatial queries by the OpenMP threads. Partial gradient contributions computed by the threads are accumulated locally within each MPI process. The final global gradient and functional value are then obtained via MPI reduction operations (summation) across all processes.
\end{remark}

\subsection{Regularization Parameter Scheduling}
\label{sec:epsilon_scheduling}

The choice of the regularization parameter $\varepsilon$ significantly influences both the smoothness of the resulting transport plan and the numerical conditioning of the optimization problem \eqref{eq:dual_functional_to_minimize_redux}. A large value of $\varepsilon$ leads to a smoother objective function that is generally easier to minimize but yields a solution further from the original unregularized OT problem. Conversely, a small $\varepsilon$ allows the solution to better approximate the unregularized problem but can introduce numerical instability and potentially slow down the convergence of optimization algorithms like BFGS.

To balance these competing effects, a common and effective strategy, often referred to as $\varepsilon$-scaling or annealing, involves starting the optimization process with a relatively large initial value $\varepsilon_0$ and gradually decreasing it through a sequence $\varepsilon_0 > \varepsilon_1 > \dots > \varepsilon_M = \varepsilon_{final}$ until the desired target value $\varepsilon_{final}$ is reached. The core idea of this procedure is to solve the regularized semi-discrete OT problem for each $\varepsilon_m$ in the sequence, using the optimal potential $\bpsi^*(\varepsilon_m)$ obtained at one stage as a warm start (initial guess) for the optimization problem at the next stage with $\varepsilon_{m+1}$. This continuation approach often improves the overall robustness of the solution process and can accelerate convergence towards the final solution corresponding to $\varepsilon_{final}$, primarily because the optimizer begins each subsequent stage closer to the minimum.
\section{Multilevel Acceleration Strategies}
\label{sec:multilevel}

Solving the RSOT problem directly on a fine discretization can be slow, especially if the initial guess for $\bpsi$ is poor. Multigrid and multilevel methods are a powerful paradigm for accelerating solvers in numerical analysis. The core idea is to address this by starting with a coarser version of the problem and progressively refining it, using the solution from the coarse level to initialize the fine level. This paradigm was successfully applied to unregularized SOT by Mérigot \cite{merigot2011multiscale}, who introduced a coarse-to-fine hierarchy for the \textit{target measure}, using Lloyd's algorithm to generate the coarser levels. In this work, we adapt and extend this concept in two ways for the regularized FE setting: by building hierarchies for the \textit{source measure} as well, and by developing a combined strategy that refines both simultaneously.
\subsection{Source Measure Coarsening}
\label{sec:source_coarsening}

To mitigate the computational expense associated with solving the RSOT problem directly on the fine target mesh $\calT_L$, we employ a multilevel strategy based on a hierarchy of source domain discretizations. This approach constructs a sequence of meshes $\calT_0, \calT_1, \dots, \calT_L$, where $\calT_0$ is the coarsest mesh and $\calT_L$ is the finest. The core idea is to leverage solutions computed on coarser, computationally less demanding levels to provide effective initial guesses for the optimization on finer levels, thereby accelerating overall convergence.

The generation of this mesh hierarchy proceeds from fine to coarse. Starting with the finest mesh $\calT_L$, coarser meshes $\calT_l$ (for $l = L-1, \dots, 0$) are created iteratively using surface-based mesh simplification techniques, as implemented in the Geogram library \cite{Levy2012Geogram}. Specifically, we employ algorithms founded on Centroidal Voronoi Tesselation (CVT) principles \cite{Du1999CVT, Liu2009CVTComputation} to optimize point placement on the mesh surface. A key aspect of our hierarchy generation is controlling the complexity at each level by specifying the desired number of \textit{surface vertices}. For each level $l < L$, we calculate a target number of surface vertices, $N_{\text{surf}}^{(l)}$, often decreasing exponentially from the number of surface vertices in $\calT_{l+1}$ (or from a defined maximum) down to a specified minimum for $\calT_0$. The surface remeshing algorithm is then invoked, targeting $N_{\text{surf}}^{(l)}$ points on the surface derived from the mesh of level $l+1$. After remeshing the surface, the interior volume is re-tetrahedralized using volumetric meshing as implemented in Tetgen \cite{Si2015Tetgen}.

A crucial aspect of this multilevel strategy is the consistent representation of the source density $\rho(x)$ across all levels of the hierarchy. The density is initially represented by a FE function $\rho_h^{(L)}$ on the finest mesh $\calT_L$. However, the meshes $\calT_l$ generated by the remeshing process are generally \emph{non-nested} (i.e., a coarse cell is not simply a union of fine cells), preventing direct application of standard FE interpolation (like nodal or $L^2$ projection) between levels. To address this, we define the density $\rho_h^{(l)}$ on each coarser mesh $\calT_l$ using a custom interpolation approach. For each degree of freedom on $\calT_l$, we identify the nearest cell in the \textit{finest} mesh $\calT_L$ and evaluate the original high-resolution density function $\rho_h^{(L)}$ within that cell at the corresponding location. This nearest-neighbor search can be efficiently performed using spatial indexing structures like R-trees built on the cell centers of $\calT_L$. The projection ensures that the density representations on coarser levels remain consistent with the fine-scale information, despite the non-nested nature of the mesh hierarchy.

The multilevel solution process then proceeds from coarsest to finest. First, the RSOT problem (e.g., minimize \eqref{eq:dual_functional_to_minimize_redux}) is solved on the coarsest mesh $\calT_0$ using the density $\rho_h^{(0)}$ to obtain an optimal potential vector $\bpsi^{(0)}$. Subsequently, for each level $l$ from $0$ to $L-1$, the computed potential $\bpsi^{(l)}$ serves as the initial guess for the optimization problem on the next finer mesh $\calT_{l+1}$, which utilizes the density $\rho_h^{(l+1)}$. This iterative process continues until the final potential $\bpsi^{(L)}$ is obtained on the target fine mesh $\calT_L$. This coarse-to-fine strategy typically requires significantly fewer iterations on the finer, more expensive levels compared to solving directly on $\calT_L$ from a default initial guess.

\begin{remark}[Generality of the Source Hierarchy]
It is worth noting that this fine-to-coarse generation, which assumes a single high-resolution source representation is available, is only one possible approach. The multilevel framework is general and only requires a sequence of meshes with decreasing complexity. If the problem setting naturally provides a hierarchy of source geometries constructed through progressive refinement (i.e., from coarse to fine), such as in adaptive mesh refinement or shape generation, this sequence can be directly employed as the levels $\{\calT_l\}_{l=0}^L$.
\end{remark}

\subsection{Target Set Coarsening}
\label{sec:target_coarsening}

Alternatively, or in conjunction with source mesh coarsening, acceleration can be achieved by constructing a hierarchy of the discrete target measure $\nu = \sum_{j=1}^N \nu_j \delta_{y_j}$. This strategy focuses on reducing the number of target points $N$ involved in the computationally intensive dual objective evaluation \eqref{eq:dual_functional_to_minimize_redux} at coarser stages of the solution process.

The hierarchy generation involves creating a sequence of discrete measures $\nu^{(0)}, \nu^{(1)}, \dots, \nu^{(L)}$, where $\nu^{(0)}$ is the coarsest measure and $\nu^{(L)} = \nu$ is the original (finest) target measure. The coarser approximations $\nu^{(l)}$ are supported on fewer points, $N_l < N_{l+1}$. This hierarchy is constructed iteratively from the finest level $L$ down to the coarsest level $0$. At each step, from level $l$ to $l-1$, a k-means clustering is applied to the support points $Y_l = \{y_j^{(l)}\}_{j=1}^{N_l}$. Each cluster of points from level $l$ defines a single point $y_k^{(l-1)}$ at the coarser level $l-1$, chosen as the cluster's centroid. Crucially, the measure associated with a coarse point, $\nu_k^{(l-1)}$, is defined as the sum of the measures of the fine points belonging to its cluster, ensuring mass conservation across levels: $\nu_k^{(l-1)} = \sum_{y_j^{(l)} \in \text{cluster}_k} \nu_j^{(l)}$.

A critical component of this multilevel strategy is the transfer of potential information from a coarse level $l$ to the next finer level $l+1$. While simple injection (assigning a child point the potential of its parent) is possible, it often provides a poor initial guess. We employ a more sophisticated approach motivated by the structure of the dual problem and its solution via Sinkhorn-like iterations. Recall that the optimal dual potentials $(\phi^*, \bpsi^*)$ are related through the expression for $\exp{(\phi^*(x)/\varepsilon)}$ in \eqref{eq:u_from_v_expanded}. Given the optimal potential $\bpsi^{(l)}$ from the coarse level $l$, we can define the corresponding optimal continuous potential $\phi^{(l)}(x)$ via:
\begin{equation}
\label{eq:phi_from_psi_coarse}
\exp{(\frac{\phi^{(l)}(x)}{\varepsilon})} = \frac{1}{\sum_{j=1}^{N_l} \nu_j^{(l)} \exp\left( (\psi_j^{(l)} - c(x, y_j^{(l)})) / \varepsilon \right)}.
\end{equation}
This $\phi^{(l)}(x)$ represents the optimal dual field adjusted to the coarse target measure $\nu^{(l)}$. The softmax refinement scheme uses this $\phi^{(l)}(x)$ to compute an initial guess $\psi_k^{(l+1), \text{init}}$ for the potential at a fine point $y_k^{(l+1)}$ on level $l+1$. The formula is derived by considering the relationship that must hold at optimality for the target marginals and is given by:

\begin{equation}
\label{eq:softmax_refinement_formula}
\psi_k^{(l+1), \text{init}} = -\varepsilon \log \left( \int_{\OmegaDomain} \frac{e^{-c(x, y_k^{(l+1)})/\varepsilon}}{\sum_{j=1}^{N_l} \nu_j^{(l)} \exp\left( (\psi_j^{(l)} - c(x, y_j^{(l)})) / \varepsilon \right)} \dd\mu(x) \right).
\end{equation}

Intuitively, this computes the (negative) $\varepsilon$-smoothed $c$-transform of the potential $\phi^{(l)}$ with respect to the cost $c(x, y_k^{(l+1)})$, averaged over the source measure $\mu$. This operation is analogous to performing one half-step of a Sinkhorn-like iteration: starting with the pair $(\phi^{(l)}, \bpsi^{(l)})$ which satisfies the source marginal constraint approximately for the coarse problem, we update the discrete potential to $\bpsi^{(l+1), \text{init}}$ to better approximate the target marginal constraint for the fine measure $\nu^{(l+1)}$ using the existing continuous potential $\phi^{(l)}$. This provides a significantly more informed initialization for the optimization at level $l+1$ compared to simple injection.

The overall multilevel algorithm for target set coarsening proceeds as follows: First, the RSOT problem is solved for the coarsest target measure $\nu^{(0)}$, yielding the potential vector $\bpsi^{(0)}$. Then, for $l = 0$ to $L-1$, the potential vector $\bpsi^{(l)}$ is used to compute $\phi^{(l)}(x)$ via \eqref{eq:phi_from_psi_coarse}, and subsequently an initial guess $\bpsi^{(l+1), \text{init}}$ is generated for the finer level $l+1$ via the softmax-based refinement \eqref{eq:softmax_refinement_formula}. The RSOT problem is then solved for the target measure $\nu^{(l+1)}$ starting from $\bpsi^{(l+1), \text{init}}$, resulting in the optimal potential $\bpsi^{(l+1)}$. This process continues until the solution $\bpsi^{(L)}$ on the original fine target measure $\nu$ is obtained. Similar to source mesh coarsening, this provides significant acceleration by reducing the computational effort required on the levels with a large number of target points.

\subsection{Combined Multilevel Strategy}
\label{subsec:combined_multilevel}

For problems characterized by both complex source domain geometry, addressed by the source mesh hierarchy described in Section~\ref{sec:source_coarsening}, and a large number of target points, handled by the target set hierarchy of Section~\ref{sec:target_coarsening}, we integrate these into a unified multilevel approach. This strategy simultaneously manages the progression through both hierarchies within a single iterative framework, designed to gracefully handle potentially different numbers of levels in the source mesh and target measure hierarchies (see Figure~\ref{fig:combined_multilevel_scheme}).

\begin{figure}[htbp]
\centering
\includegraphics[width=0.9\textwidth]{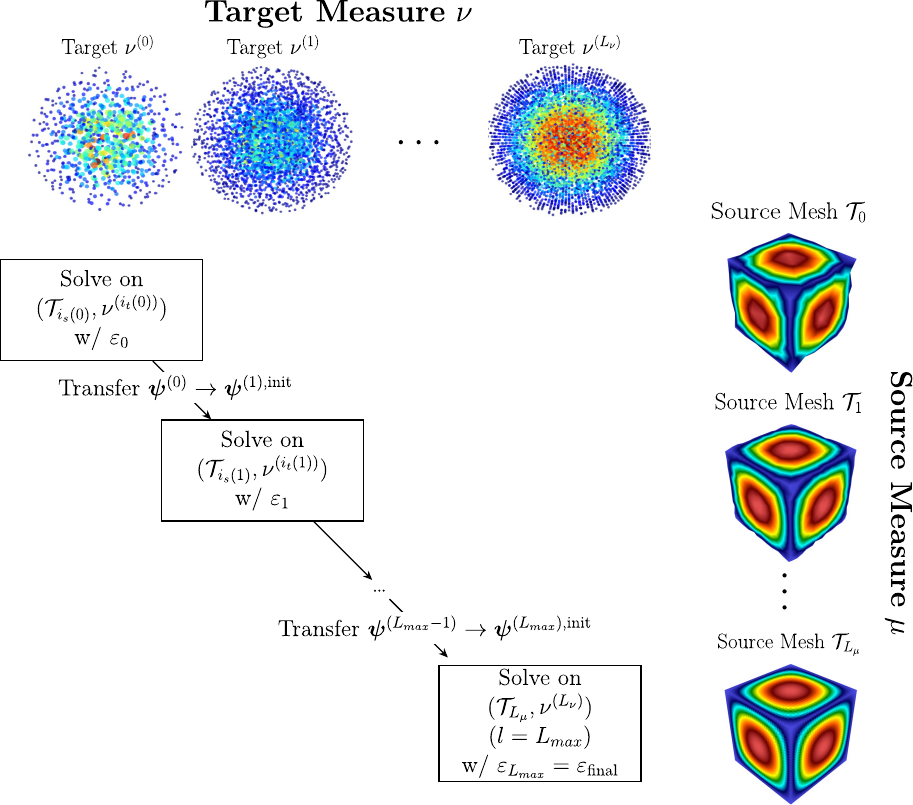}
\caption{Illustration of the unified combined multilevel strategy. The process iterates sequentially from $l=0$ to $l=L_{max}$. At each iteration $l$, the source mesh $\mathcal{T}_{i_s(l)}$ and target measure $\nu^{(i_t(l))}$ are selected based on the index logic. The coarser hierarchy remains fixed until the main iteration index $l$ allows it to start refining. Potential transfer uses $\bpsi^{(l-1)}$ to initialize $\bpsi^{(l)}$, employing either softmax refinement or injection based on whether the target level index $i_t$ increased.}
\label{fig:combined_multilevel_scheme}
\end{figure}

Let the source mesh hierarchy be $\{\mathcal{T}_l\}_{l=0}^{L_\mu}$ and the target measure hierarchy be $\{\nu^{(l)}\}_{l=0}^{L_\nu}$, as defined previously. We define the total number of refinement steps in the combined strategy as $L_{max} = \max(L_\mu, L_\nu)$.

The combined multilevel process iterates through a main loop indexed by $l$ from $0$ (coarsest overall step) to $L_{max}$ (finest overall step). At each iteration $l$, the specific source mesh and target measure used are determined by their respective level indices, $i_s$ and $i_t$. The source level index is $i_s = \max(0, l - (L_{max} - L_\mu))$, and the target level index is $i_t = \max(0, l - (L_{max} - L_\nu))$. This indexing scheme ensures that the hierarchy with fewer levels (e.g., source, if $L_\mu < L_\nu$) remains fixed at its coarsest representation ($\mathcal{T}_0$) for the initial $L_\nu - L_\mu$ iterations. After this initial phase, both the source level index $i_s$ and the target level index $i_t$ increment together at each subsequent iteration $l$, until the hierarchy with more levels reaches its finest representation at $l = L_{max}$. The optimization problem (minimizing the dual functional $J_\varepsilon(\bpsi)$, Eq.~\eqref{eq:dual_functional_to_minimize_redux}) is solved at each iteration $l$ using the selected pair $(\mathcal{T}_{i_s}, \nu^{(i_t)})$.

Crucial to the efficiency of this approach is the transfer of potential information between iterations. Let $\bpsi^{(l-1)}$ be the optimal potential vector obtained at iteration $l-1$ (corresponding to source mesh $\mathcal{T}_{i_s(l-1)}$ and target measure $\nu^{(i_t(l-1))}$). To obtain an initial guess $\bpsi^{(l), \text{init}}$ for the optimization at iteration $l$ (on $\mathcal{T}_{i_s(l)}$ with $\nu^{(i_t(l))}$), we adopt a strategy based on how the levels change.

If the target measure level index increased sequentially from the previous iteration (i.e., $i_t(l) = i_t(l-1) + 1$), we use the softmax refinement scheme described in Section~\ref{sec:target_coarsening} to compute the initial guess. The initial potential for a point $y_k^{(i_t(l))}$ at the finer target level $i_t(l)$ is computed based on the potential $\bpsi^{(l-1)}$ from the coarser target level $i_t(l-1)$, using the formula:
\begin{equation}
\label{eq:softmax_refinement_formula_combined}
\psi_k^{(l), \text{init}} = -\varepsilon \log \left( \int_{\OmegaDomain} \frac{e^{-c(x, y_k^{(i_t(l))})/\varepsilon}}{\sum_{j=1}^{N_{i_t(l-1)}} \nu_j^{(i_t(l-1))} \exp\left( (\psi_j^{(l-1)} - c(x, y_j^{(i_t(l-1))})) / \varepsilon \right)} \dd\mu^{(i_s(l))}(x) \right).
\end{equation}
Note that the integral in \eqref{eq:softmax_refinement_formula_combined} is evaluated numerically using the source measure $\dd\mu^{(i_s(l))}(x) = \rho_h^{(i_s(l))}(x) \dd x$ defined on the \emph{current} source mesh $\mathcal{T}_{i_s(l)}$, while the summation in the denominator uses the potentials $\bpsi^{(l-1)}$ and points $\{y_j^{(i_t(l-1))}\}$ from the \emph{previous} (coarser) target level $i_t(l-1)$.

If the target level index did \emph{not} increase (i.e., $i_t(l) = i_t(l-1)$), which occurs when only the source mesh refines or when the target hierarchy remains at its coarsest level during the initial phase, we use simple injection, as the dimension of the potential vector does not change. The potential vector $\bpsi^{(l-1)}$ is thus used directly as the initial guess: $\bpsi^{(l), \text{init}} = \bpsi^{(l-1)}$.

This combined strategy naturally integrates with regularization parameter scheduling (discussed in Section \ref{sec:epsilon_scheduling}). The sequence of $\varepsilon$ values can be synchronized with the main iteration loop $l=0, \dots, L_{max}$, starting with a larger $\varepsilon_0$ at the coarsest level ($l=0$) and decreasing towards the final target value $\varepsilon_{final}$ as $l$ approaches $L_{max}$ (see Figure \ref{fig:combined_multilevel_scheme}).

This unified approach avoids distinct phases, allowing both source and target complexities to be addressed synergistically throughout the computation. It provides a flexible framework for handling hierarchies of different depths, ensuring that computations on coarser representations inform the solution process on progressively finer levels. %
\section{Numerical Experiments}
\label{sec:numerical_experiments}

In this section, we present numerical results to demonstrate the capabilities of our computational framework. The core of our work is encapsulated in the \texttt{SemiDiscreteOT} C++ library, an open-source implementation of the methods discussed, built upon the \texttt{deal.II} finite element library~\cite{DEALII} and available at \url{https://github.com/SemiDiscreteOT/SemiDiscreteOT}. This library integrates our acceleration techniques, multilevel solvers, and a hybrid MPI+OpenMP parallelization strategy for execution on HPC clusters. These techniques are also amenable to GPU acceleration, preliminary studies in this regard are reported in the Appendix~\ref{appendix:gpu}. For specific tasks, we also rely on external libraries such as Geogram~\cite{Levy2012Geogram} for mesh operations and Faiss~\cite{Faiss} for k-means clustering. All simulations were performed on the Marconi A3 partition (CINECA), featuring nodes with dual 24-core Intel Xeon 8160 CPUs and 192 GB RAM, interconnected via Intel OmniPath.

We present three distinct computational studies to validate the robustness, efficiency, and versatility of our framework:
\begin{enumerate}
    \item \textbf{Benchmarking and Scalability:} A thorough HPC benchmarking analysis to assess the performance, accuracy, and scalability of our acceleration and multilevel strategies using a non-trivial source density derived from a PDE solution.
    \item \textbf{Wasserstein Barycenter Computation:} An application to compute the Wasserstein barycenter of complex 3D vascular geometries, which showcases the use of our solver within a nested, iterative Lloyd-type algorithm.
    \item \textbf{Geometric Registration:} An application using the same vascular geometries to demonstrate pairwise registration, showing the recovery and analysis of a transport map between complex shapes.
    \item \textbf{Blue noise sampling} An application for blue noise sampling via the Lloyd algorithm with non-uniform measures and Riemannian barycenters. A classical problem that naturally requires the semi-discrete formulation of optimal transport and highlights the framework's ability to handle non-Euclidean costs.
\end{enumerate}
These experiments collectively validate the developed \texttt{SemiDiscreteOT} library and the underlying numerical methods.

\subsection{Benchmarking and Scalability Analysis}
\label{subsec:benchmarking}

The primary goal of this experiment is to evaluate the computational performance of the \texttt{SemiDiscreteOT} framework. We investigate the efficiency gains from the proposed acceleration techniques, the convergence improvements offered by multilevel strategies, the impact of regularization parameter scheduling, and the parallel scalability of the implementation.

The benchmark transport problem involves measures derived from Darcy flow simulations in $d=3$ dimensions, defined on two distinct domains and computed using related PDE systems. The source domain is the cube $\Omega_1 = [-1, 1]^3$, discretized by a tetrahedral mesh $\mathcal{T}_h^1$. The target domain is the unit ball $\Omega_2 = \{ \mathbf{x} \in \mathbb{R}^3 \mid \|\mathbf{x}\| \le 1 \}$, discretized by a tetrahedral mesh $\mathcal{T}_h^2$. On each domain $\Omega_i$ ($i=1,2$), we first solve the Darcy flow equations to find velocity $\mathbf{u}_i$ and pressure $p_i$:
\begin{align*}
    \left\{
    \begin{aligned}
        \mathbf{u}_i + \nabla p_i &= \mathbf{0} \quad \text{in } \Omega_i, \\
        \nabla \cdot \mathbf{u}_i &= f_i \quad \text{in } \Omega_i, \\
        \mathbf{u}_i \cdot \mathbf{n} &= 0 \quad \text{on } \partial\Omega_i,
    \end{aligned}
    \right.
    \label{eq:darcy_bc}
\end{align*}
where $\mathbf{n}$ is the outward unit normal, and the source terms are:
\begin{align*}
    f_1(x,y,z) &= \exp\left(-\frac{1}{1 - \max(x^2, y^2, z^2)}\right) \quad \text{for } \Omega_1, \\
    f_2(x,y,z) &= \exp\left(-\frac{1}{1 - (x^2+y^2+z^2)}\right) \quad \text{for } \Omega_2.
\end{align*}
These systems are discretized using continuous piecewise linear ($P_1$) elements for velocity and discontinuous piecewise constant ($P_0$) elements for pressure.

The \textit{source measure} $\mu$ is derived from the pressure solution $p_{1,h}$ on the cube $\Omega_1$. The piecewise constant pressure field $p_{1,h}$ is interpolated into the space of continuous $P_1$ functions on $\mathcal{T}_h^1$, resulting in the field $\tilde{p}_{1,h}(x)$. This $P_1$ pressure field is then $L^1$-normalized over the domain to get the source density: $\rho_h^1(x) = \tilde{p}_{1,h}(x) / \int_{\Omega_1} \tilde{p}_{1,h}(x') dx'$. The source measure is thus defined as $\dd\mu(x) = \rho_h^1(x) \dd x$.

The \textit{target measure} $\nu$ is constructed from the pressure solution $p_{2,h}$ on the ball $\Omega_2$. Similarly, $p_{2,h}$ is interpolated to yield a continuous $P_1$ field $\tilde{p}_{2,h}(x)$. The target measure $\nu$ is discrete and supported on the set $Y = \{y_j\}_{j=1}^N$, which consists of the vertices of the target mesh $\mathcal{T}_h^2$. The weight $\nu_j$ associated with vertex $y_j$ is determined by evaluating the interpolated $P_1$ pressure field $\tilde{p}_{2,h}$ at that vertex, followed by normalization across all vertices:
\begin{equation*}
    \nu_j = \frac{\tilde{p}_{2,h}(y_j)}{\sum_{k=1}^N \tilde{p}_{2,h}(y_k)}.
\end{equation*}
This procedure defines the discrete target measure $\nu = \sum_{j=1}^N \nu_j \delta_{y_j}$, where $N$ is the number of vertices in $\mathcal{T}_h^2$. Note that this construction ensures $\sum \nu_j = 1$. The resulting source density and target measure are visualized in Figure~\ref{fig:benchmark_densities}.

For all benchmark experiments, we employ the Euclidean cost function $c(x,y) = \frac{\|x - y\|^2}{2}$, which is a natural choice for transport problems in geometric settings and allows for efficient spatial indexing using R-trees for the truncation strategies described in Section~\ref{sec:acceleration}. The Euclidean cost also ensures that the truncation cutoffs have a direct geometric interpretation as spatial neighborhoods around each quadrature point.

\begin{figure}[htbp]
    \centering
    \begin{subfigure}[b]{0.4\textwidth}
        \centering
        \includegraphics[width=\textwidth, clip, trim=500 0 30 100]{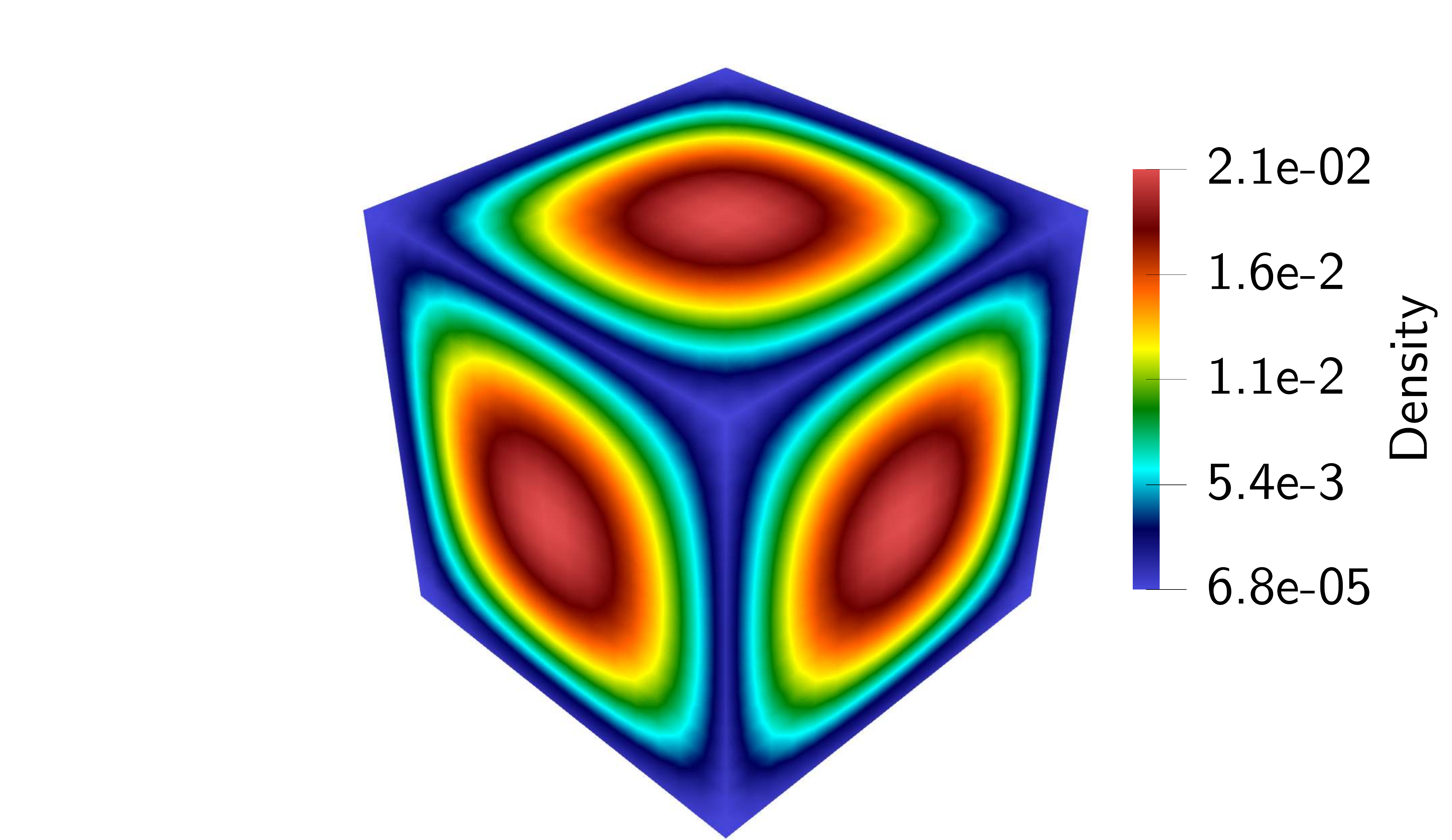}
        \caption{Source Measure $\mu$ (3D View)}
        \label{fig:bench_source_3d}
    \end{subfigure}
    \hfill %
    \begin{subfigure}[b]{0.4\textwidth}
        \centering
        \includegraphics[width=\textwidth, clip, trim=500 0 30 100]{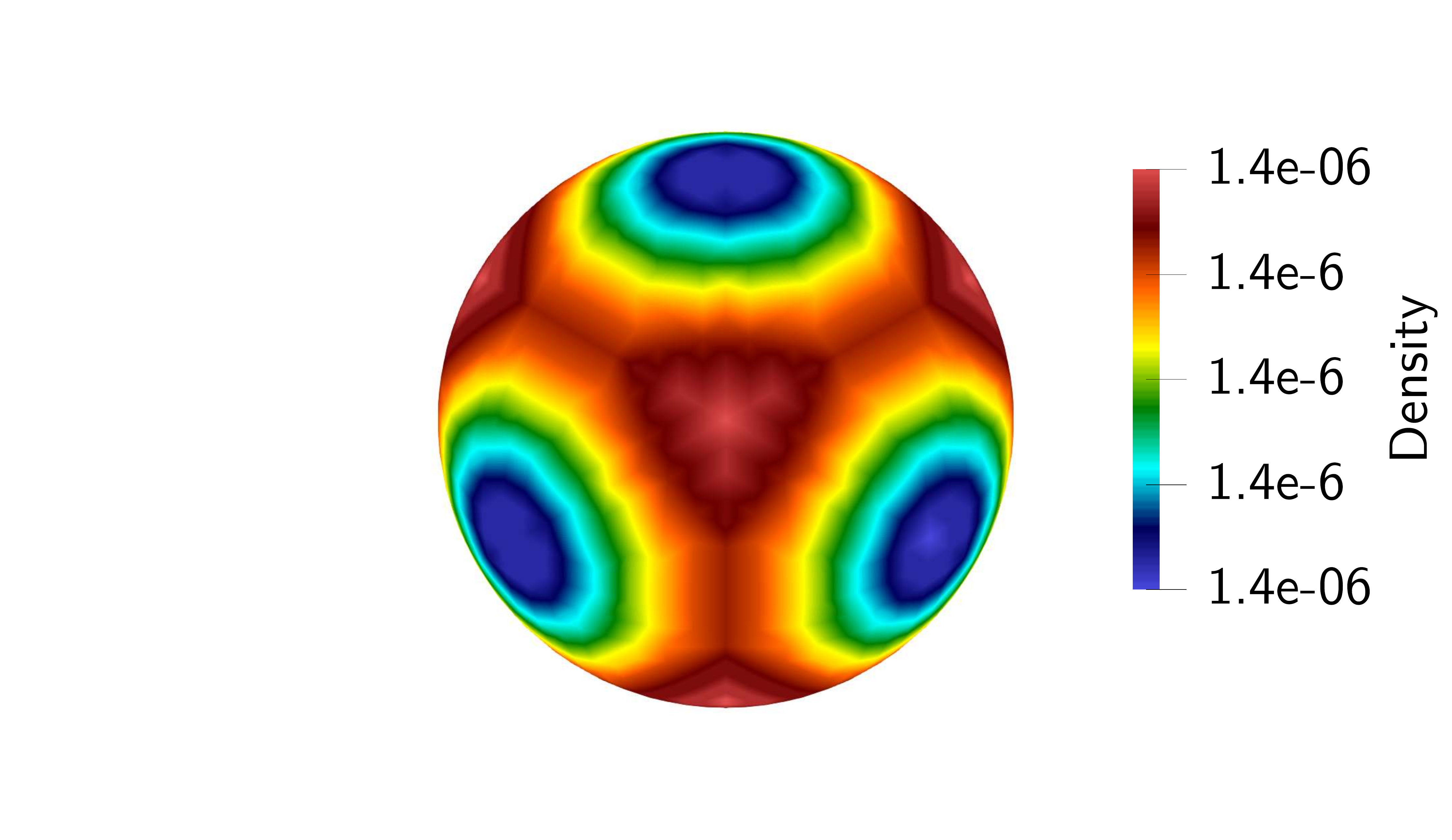}
        \caption{Target Measure $\nu$ (3D View)}
        \label{fig:bench_target_3d}
    \end{subfigure}

    \vspace{0.5cm} %

    \begin{subfigure}[b]{0.4\textwidth}
        \centering
        \includegraphics[width=\textwidth, clip, trim=500 0 30 100]{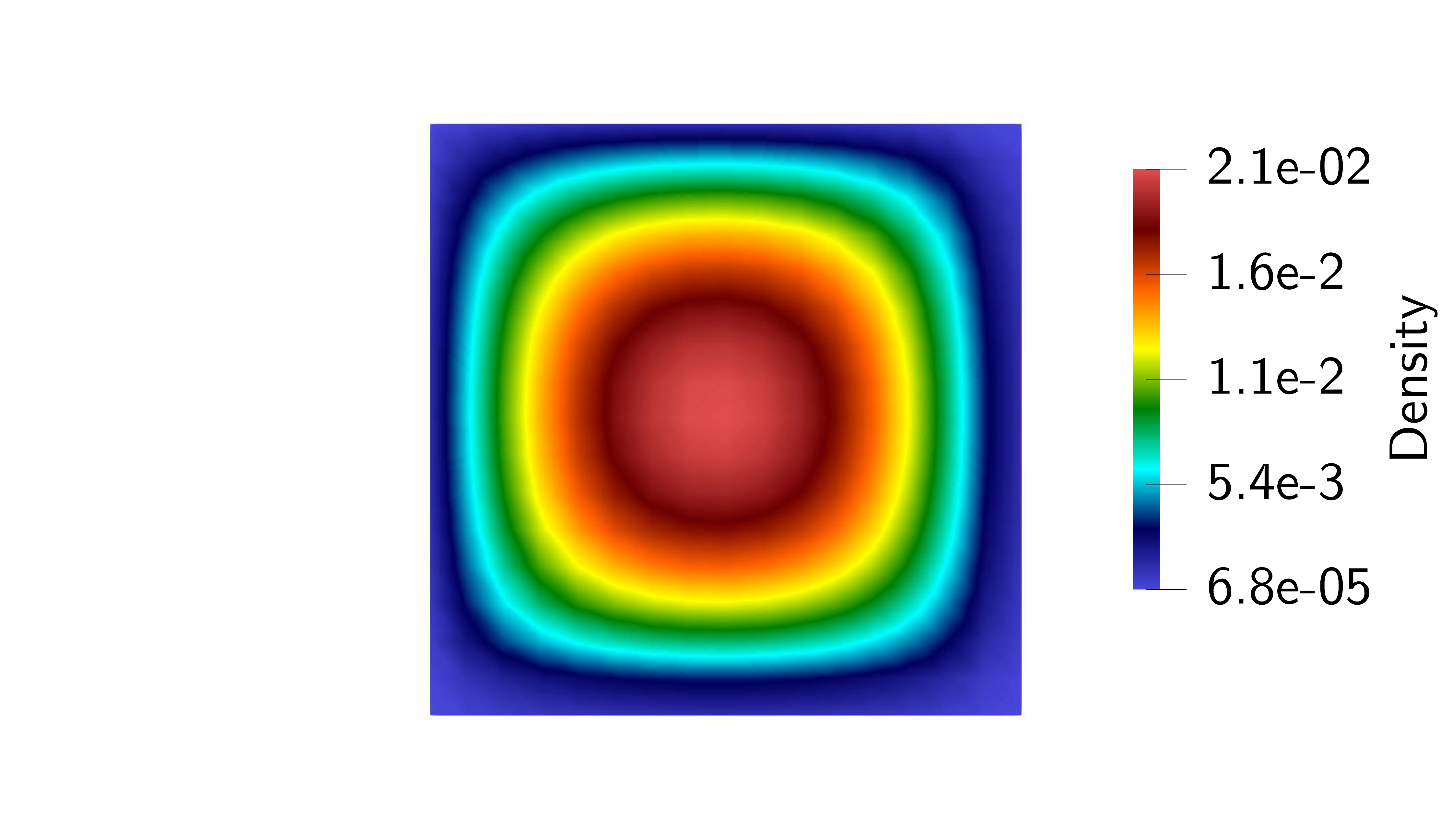}
        \caption{Source Measure $\mu$ ($yz$ Plane Clip)}
        \label{fig:bench_source_clip}
    \end{subfigure}
    \hfill %
    \begin{subfigure}[b]{0.4\textwidth}
        \centering
        \includegraphics[width=\textwidth, clip, trim=500 0 30 100]{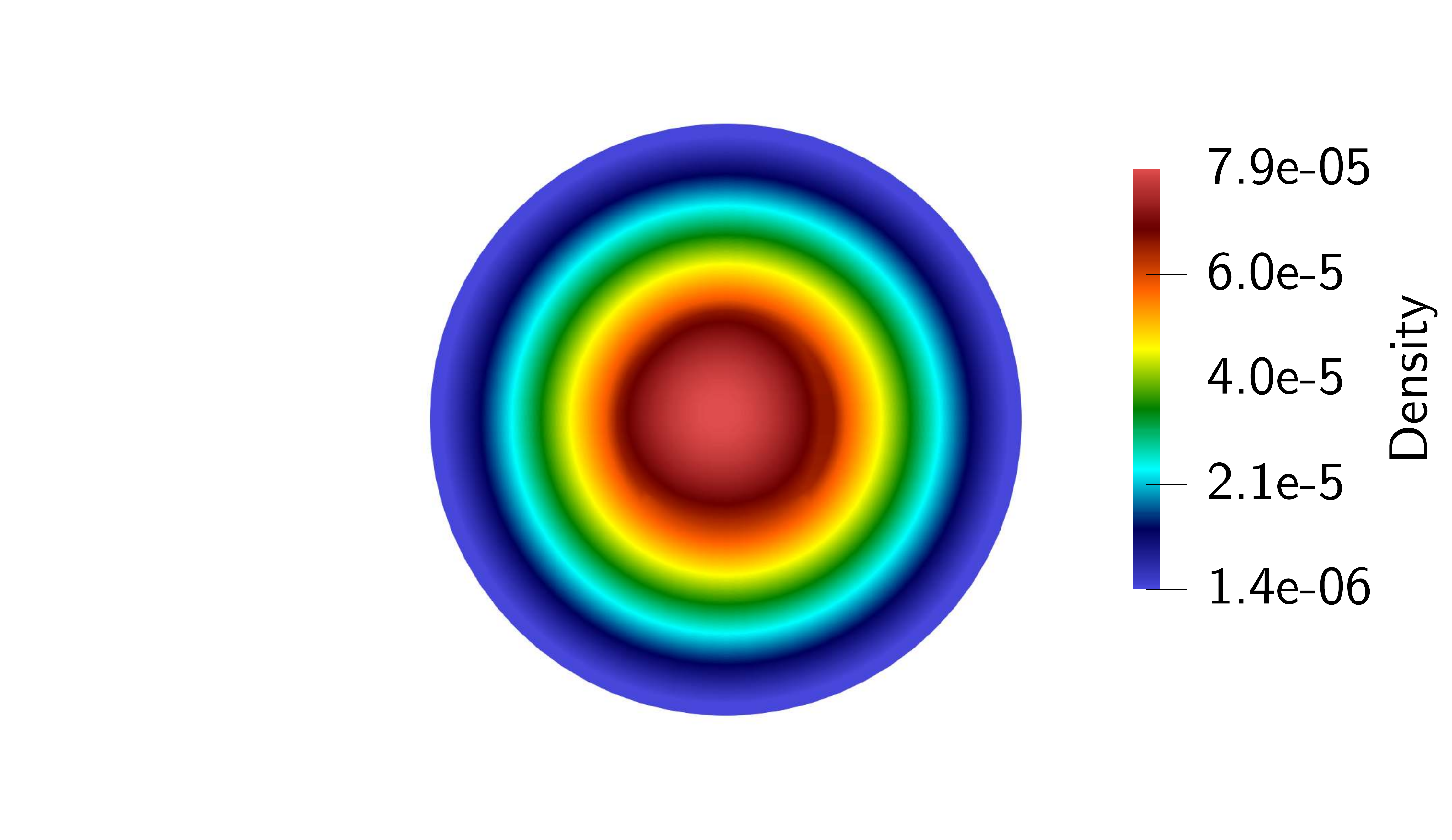}
        \caption{Target Measure $\nu$ ($yz$ Plane Clip)}
        \label{fig:bench_target_clip}
    \end{subfigure}
    \caption{Visualization of the source measure $\mu$ derived from the pressure solution on the cube domain $\Omega_1$, and the discrete target measure $\nu$ derived from the pressure solution on the ball domain $\Omega_2$. Both 3D perspective views and cross-sections through the $yz$ plane ($x=0$) are shown.}
    \label{fig:benchmark_densities}
\end{figure}

In addition to these PDE-derived densities, which introduce spatial non-uniformity and represent our primary benchmark case (referred to as Custom Density), we also consider a Uniform Density scenario where a constant source density $\rho^1(x)$ over $\Omega_1$ and uniform target weights $\nu_j = 1/N$ are used. This simpler case allows us to isolate the impact of density distribution complexity on the performance of our numerical methods.

The RSOT problem was solved by minimizing the dual functional $J_\varepsilon(\bpsi)$ (Eq.~\eqref{eq:dual_functional_to_minimize_redux}) using the L-BFGS algorithm. Numerical integration utilized Gaussian quadrature of order 3.

Unless otherwise specified, all subsequent experiments employ a common set of numerical parameters. To accelerate the evaluation of the dual functional and its gradient, we utilize the Geometric Bound truncation strategy (see Section~\ref{subsubsec:truncation_revised}) with a tolerance of $\tau=\SI{1e-4}{}$. The L-BFGS optimization is terminated when the $L^1$ norm of the gradient falls below a tolerance, i.e., $\|\nabla J_\varepsilon^h(\bpsi) \|_1 \le \delta_{\text{tol}}$, with $\delta_{\text{tol}} = \SI{1e-3}{}$. As detailed in Remark~\ref{rem:stopping_criterion}, this stopping criterion directly relates to the satisfaction of the target marginal constraints.

\subsubsection{Analysis of Truncation Strategies}
\label{subsubsec:param_study_trunc_reg}

This subsection evaluates the performance of different truncation strategies (Pointwise Bound $C_{\text{pw}}$, Integral Bound $C_{\text{int}}$, and Geometric Bound $C_{\text{geom}}$), detailed in Section~\ref{subsubsec:truncation_revised}. The study is conducted for a fixed regularization parameter $\varepsilon=\SI{1e-2}{}$, using the specific control parameters ($\delta_{\text{thr}}$ or $\tau$) indicated in Table~\ref{tab:small_case_trunc_comparison_fixed_eps}. For these tests, the source mesh consists of \num{98304} cells (yielding \num{17969} $P_1$ degrees of freedom for the source density) and the discrete target measure is supported on $N=\num{14761}$ points. The experiments are performed for both the Custom Density and Uniform Density scenarios, as previously described in Section~\ref{subsec:benchmarking}.

\begin{figure}[htbp]
    \centering
    \includegraphics[width=0.9\textwidth]{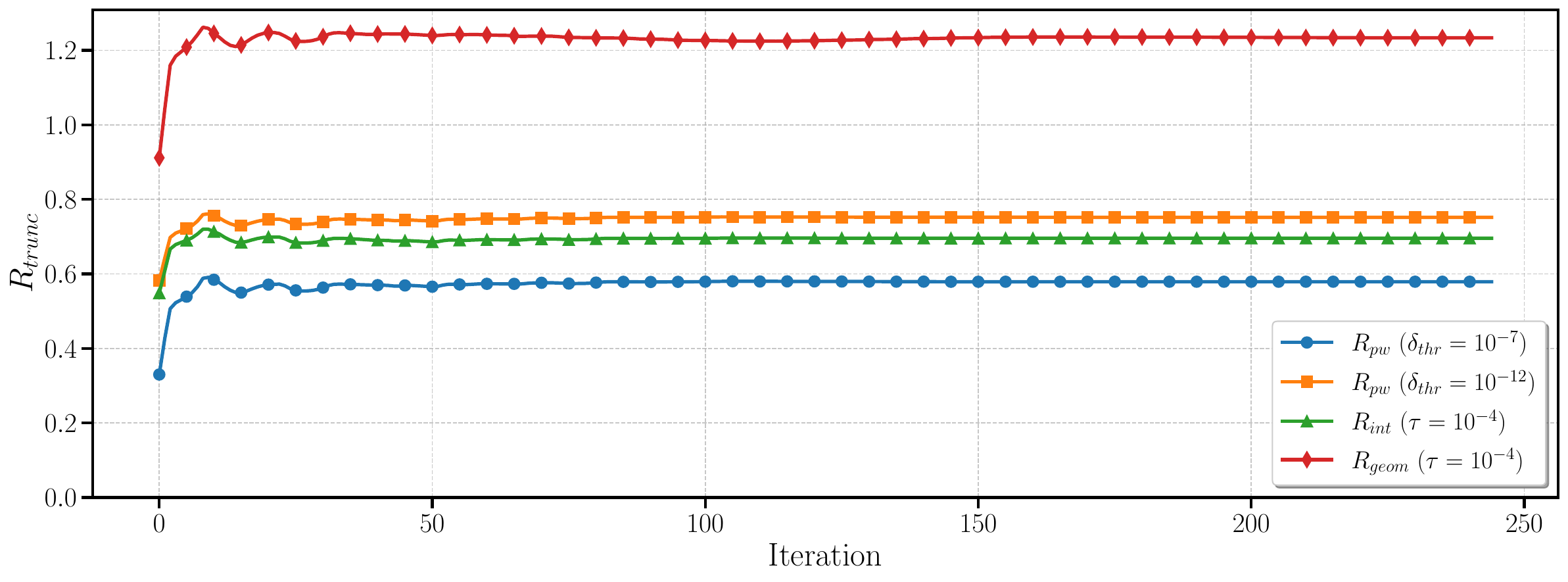}
    \caption{Visualization of truncation cutoffs for different strategies under the custom density. The figure compares the Pointwise bound $C_{\text{pw}}$ with thresholds $\delta_{\text{thr}}=\SI{1e-12}{}$ and $\delta_{\text{thr}}=\SI{1e-7}{}$ against the Geometric bound $C_{\text{geom}}$ and Integral bound $C_{\text{int}}$ with tolerance $\tau=\SI{1e-4}{}$. Each strategy produces a different effective cutoff, determining how many target points are included in the summation at each source quadrature point, which directly impacts both computational cost and numerical accuracy of $J_{\varepsilon}(\bpsi)$.}
    \label{fig:radius_comparison}
\end{figure}

\begin{table}[htbp]
\caption{Performance of truncation strategies.}
  \centering
  \setlength{\tabcolsep}{5.5pt}
  \begin{tabular}{@{}ll|rrcc|rrcc@{}}
      \toprule
      & & \multicolumn{4}{c|}{Custom Density} & \multicolumn{4}{c}{Uniform Density} \\
      \cmidrule(lr){3-6} \cmidrule(l){7-10}
      Strategy & Param. & Iter & Time & $E_{\text{rel}}$ & Conv & Iter & Time & $E_{\text{rel}}$ & Conv \\
      \midrule
      Pointwise & $\delta_{\text{thr}}=\SI{1e-12}{}$ & 244 & 49.1 & \num{1.7e-5} & \checkmark & 119 & 26.8 & \num{1.1e-5} & \checkmark \\
                & $\delta_{\text{thr}}=\SI{1e-10}{}$ & 174 & 33.4 & \num{1.2e-4} & \ding{55} & 127 & 23.8 & \num{5.3e-5} & \checkmark \\
                & $\delta_{\text{thr}}=\SI{1e-7}{}$  & 28  & 5.2  & \num{7.4e-3} & \ding{55} & 5   & 2.0  & \num{1.6e-1} & \ding{55} \\
      \midrule
      Integral  & $\tau=\SI{1e-6}{}$ & 244 & 48.4 & \num{1.4e-5} & \checkmark & 121 & 25.6 & \num{2.1e-5} & \checkmark \\
                & $\tau=\SI{1e-5}{}$ & 244 & 37.6 & \num{3.7e-5} & \checkmark & 128 & 24.7 & \num{5.2e-5} & \checkmark \\
                & $\tau=\SI{1e-4}{}$ & 244 & 34.7 & \num{9.2e-5} & \checkmark & 87  & 20.7 & \num{1.7e-4} & \ding{55} \\
      \midrule
      Geometric & $\tau=\SI{1e-6}{}$ & 244 & 146.8& \num{1.1e-15}& \checkmark & 97  & 57.6 & \num{9.2e-14}& \checkmark \\
                & $\tau=\SI{1e-5}{}$ & 244 & 142.7& \num{2.0e-15}& \checkmark & 97  & 55.9 & \num{1.2e-14}& \checkmark \\
                & $\tau=\SI{1e-4}{}$ & 244 & 139.5& \num{8.9e-15}& \checkmark & 97  & 55.7 & \num{2.0e-13}& \checkmark \\
      \midrule
      None      & ---             & 244 & 236.1& 0      & \checkmark & 97  & 97.3 & 0      & \checkmark \\
      \bottomrule
  \end{tabular}
  \par\medskip
  \label{tab:small_case_trunc_comparison_fixed_eps}
  \footnotesize Note: $E_{\text{rel}}$ is the relative error with respect to the baseline case, computed using converged dual functional values.
\end{table}

The results in Table~\ref{tab:small_case_trunc_comparison_fixed_eps} reveal significant performance differences among the three truncation strategies when applied to both custom and uniform density distributions. All truncation methods substantially outperform the baseline approach without truncation (None). For the custom density, computation times for convergent strategies range from \SI{34.7}{\second} to \SI{167.3}{\second}, compared to \SI{236.1}{\second} for the baseline. For the uniform density, convergent strategies take between \SI{20.7}{\second} and \SI{66.1}{\second}, versus \SI{97.3}{\second} for the baseline.

Key to this comparison is the relative error ($E_{\text{rel}}$), calculated with respect to the baseline case using the converged dual functional values, denoted $J_{\varepsilon}(\bpsi)$. The formula is:
\begin{equation}
  E_{\text{rel}} = \frac{\lvert J_{\varepsilon, \text{trunc}}(\bpsi) - J_{\varepsilon, \text{baseline}}(\bpsi) \rvert}{\lvert J_{\varepsilon, \text{baseline}}(\bpsi) \rvert}.
  \label{eq:relative_error_definition}
\end{equation}

The Pointwise strategy yields the fastest computation times with aggressive (smaller) $\delta_{\text{thr}}$ values but tends to fail convergence with large relative errors. Convergence is typically achieved only with strict threshold settings (e.g., $\delta_{\text{thr}}=\SI{1e-12}{}$), which still provide valuable reductions in computation time. As illustrated in Figure~\ref{fig:radius_comparison}, aggressive settings ($\delta_{\text{thr}}=\SI{1e-7}{}$) produce cutoffs $C_{\text{pw}}$ consistently smaller than $C_{\text{int}}$, resulting in excessive truncation that compromises solution accuracy. Conservative settings ($\delta_{\text{thr}}=\SI{1e-12}{}$) generally exceed $C_{\text{int}}$, providing sufficient approximation quality to ensure convergence.

The Integral bound strategy demonstrates mixed performance. For custom density, all tested $\tau$ values converge, with the least strict tolerance ($\tau=\SI{1e-4}{}$) providing the fastest time while maintaining small relative error. For uniform density, convergence is less consistent, with some configurations failing despite achieving low $E_{\text{rel}}$ values in other cases. The prescribed tolerance $\tau$ is not always a direct predictor of the final observed $E_{\text{rel}}$, because its calculation relies on an approximation of the integral term ($D(\bpsi, C)$ in Eq.~\eqref{eq:truncation_cutoff_int}) as detailed in Section~\ref{sec:acceleration}.

The Geometric bound strategy stands out for exceptional robustness, achieving convergence for all tested parameters and densities while maintaining extremely high solution fidelity ($E_{\text{rel}} \approx \num{1e-13}$ to $\num{1e-15}$). This reliability stems from its conservative formulation (Appendix~\ref{app:truncation_derivation}). Computation times are generally higher than other strategies when they converge, though still significantly faster than the baseline.

A notable observation is that for the custom density case all convergent strategies require the same number of optimization steps (244), but computation times vary significantly (\SI{34.7}{\second} to \SI{236.1}{\second}). This directly implies that truncation strategy choice alters the computational cost per iteration: stricter truncation yields cheaper iterations due to fewer points in the sums, while conservative or no truncation leads to expensive iterations due to the $O(N_q N)$ complexity.

The results suggest a critical relationship between relative error and convergence likelihood. For uniform density, strategies with $E_{\text{rel}} \gtrsim \num{1e-4}$ tend to fail convergence, while those with $E_{\text{rel}} \lesssim \num{1e-5}$ typically converge. A similar but potentially different threshold applies to custom density. This pattern indicates that successful convergence is contingent on the truncation-induced $E_{\text{rel}}$ remaining below a problem-dependent threshold, which varies based on density distribution and other problem characteristics.

This investigation quantifies the practical trade-offs inherent in choosing a truncation strategy. The results from this analysis on a moderately sized problem serve as a valuable guide for selecting hyperparameters in large-scale computations. Given its robustness and high fidelity, the Geometric bound is a reliable choice. We therefore select it with a tolerance of $\tau=\SI{1e-4}{}$ as the default strategy for all subsequent experiments.

\subsubsection{Multilevel Strategies}
\label{subsubsec:multilevel_performance}
To evaluate the multilevel strategies from Section~\ref{sec:multilevel}, we constructed 5-level hierarchies for both source and target measures (details in Table~\ref{tab:hierarchies_details}, visualized in Figures~\ref{fig:source_hierarchy_visualization} and \ref{fig:target_hierarchy_visualization}). The finest level comprises \num{137313} source DoFs and $N_0=\num{116305}$ target points.

We compare three multilevel procedures against a standard single-level solver: Source-only, Target-only, and Combined multilevel, which are detailed in Section~\ref{sec:multilevel}. The Target-only and Combined strategies use the softmax refinement scheme (Eq.~\eqref{eq:softmax_refinement_formula}) to transfer potentials between levels. All experiments were conducted on \num{480} cores with $\varepsilon=\SI{1e-2}{}$.

The results, summarized in Table~\ref{tab:multilevel_comparison_results_detailed}, reveal a transformative performance improvement. The standard solver required \SI{65369}{\second} (over \SI{18}{\hour}) and more than \num{1100} L-BFGS iterations for the custom density case, and failed to converge on the uniform case within a \SI{24}{\hour} time limit. In stark contrast, all multilevel approaches provided substantial acceleration. The most effective strategies, Target-only and Combined, converged after only one or two iterations on the finest, most expensive level. For the complex custom density case, the Target-only strategy finished in just \SI{373}{\second}, achieving a remarkable \SI{175}{\times} speedup over the standard solver. This dramatic improvement stems from the multilevel paradigm's core benefit: using solutions from coarser, computationally cheaper levels provides excellent initial guesses for finer levels, as visually corroborated by the rapid convergence shown in Figure~\ref{fig:functional_evolution}.

While universally beneficial, the strategies showed different performance profiles. The \textbf{Source-only} strategy proved least effective, especially for the custom density, where it took \SI{10450}{\second} (nearly \SI{3}{\hour}) to complete. Its poor performance is explained by two factors: it must solve against the full, high-resolution target measure at every level, and the initial guess it provides for the final level is less effective, a difficulty underscored by the 65 L-BFGS iterations it required on the finest level, compared to just one or two for the other methods. In contrast, the \textbf{Target-only} strategy, which keeps the source mesh fixed while refining the target measure, was the most efficient for the complex custom density (\SI{373}{\second}). Its effectiveness stems from the high-quality warm start provided by the softmax potential refinement (Eq.~\eqref{eq:softmax_refinement_formula}), which is computed on the full-resolution source mesh. As shown in Table~\ref{tab:multilevel_time_breakdown}, the cost for this strategy is dominated by the softmax refinement step and the final level solve.

The \textbf{Combined} strategy, which coarsens both measures, yielded excellent results, proving to be the fastest for the uniform density case (\SI{239}{\second}) and highly competitive for the custom density (\SI{785}{\second}). Its performance advantage in the uniform case stems from the very low cost of its intermediate solves, which use both coarsened meshes and target sets. For the more complex custom density, it required two fine-level iterations versus just one for the Target-only method, accounting for its slightly higher runtime.

In conclusion, this analysis highlights a critical trade-off: the Target-only strategy provides a high-quality warm start via softmax refinement on the full source mesh, while the Combined strategy benefits from cheaper intermediate solves. For problems with complex densities, the quality of the initial guess appears paramount, making the Target-only approach a consistently robust and efficient choice. Overall, the multilevel framework is an indispensable tool for making large-scale RSOT problems computationally tractable.

\begin{figure}[htbp]
    \centering
    \begin{subfigure}[b]{0.32\textwidth}
        \centering
        \includegraphics[width=4.8cm,clip, trim=550 0 30 100]{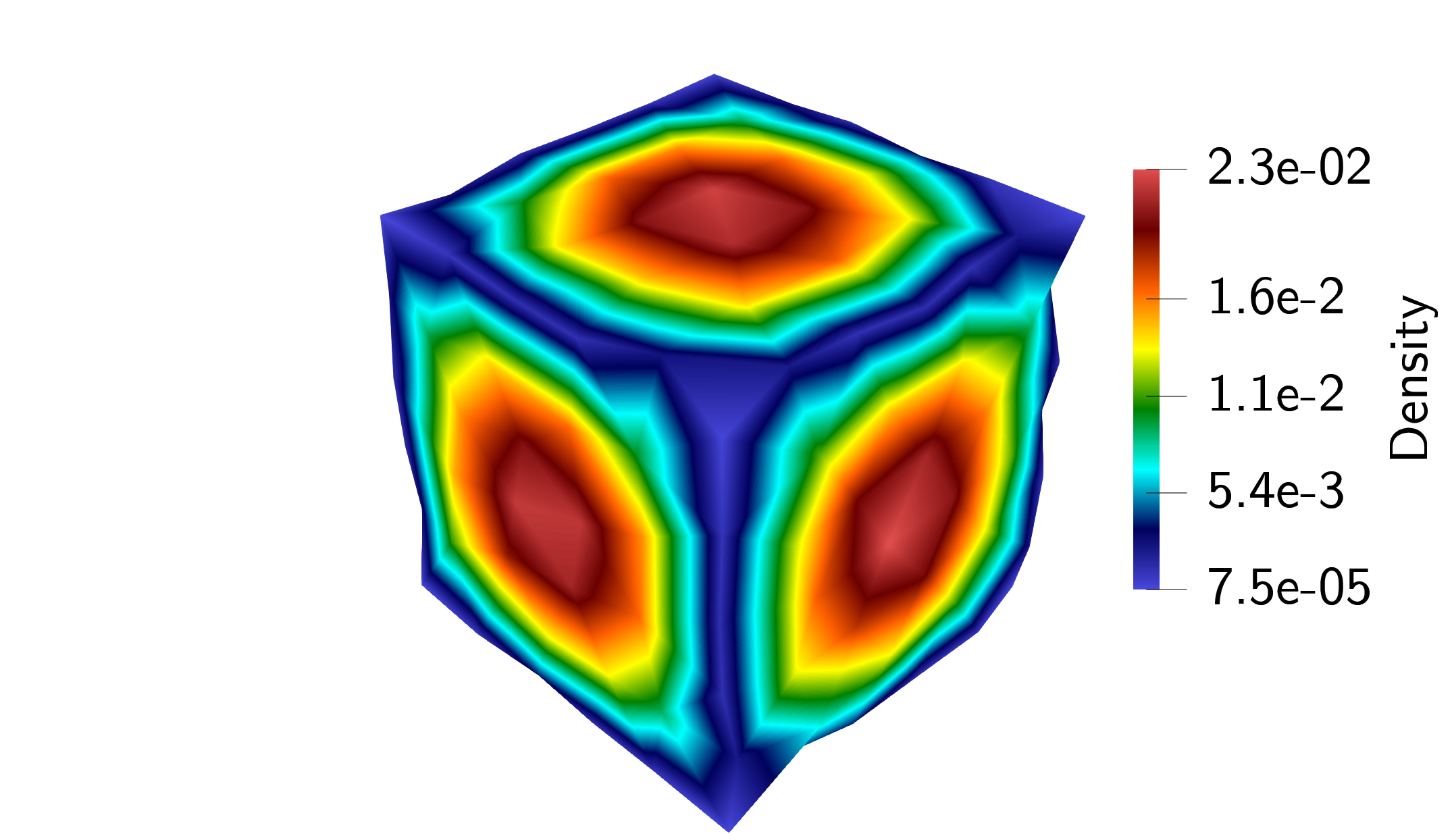}
        \caption{Level 4 (199 DoFs)}
        \label{fig:source_level_4}
    \end{subfigure}
    \hfill
    \begin{subfigure}[b]{0.32\textwidth}
        \centering
        \includegraphics[width=4.8cm,clip, trim=550 0 30 100]{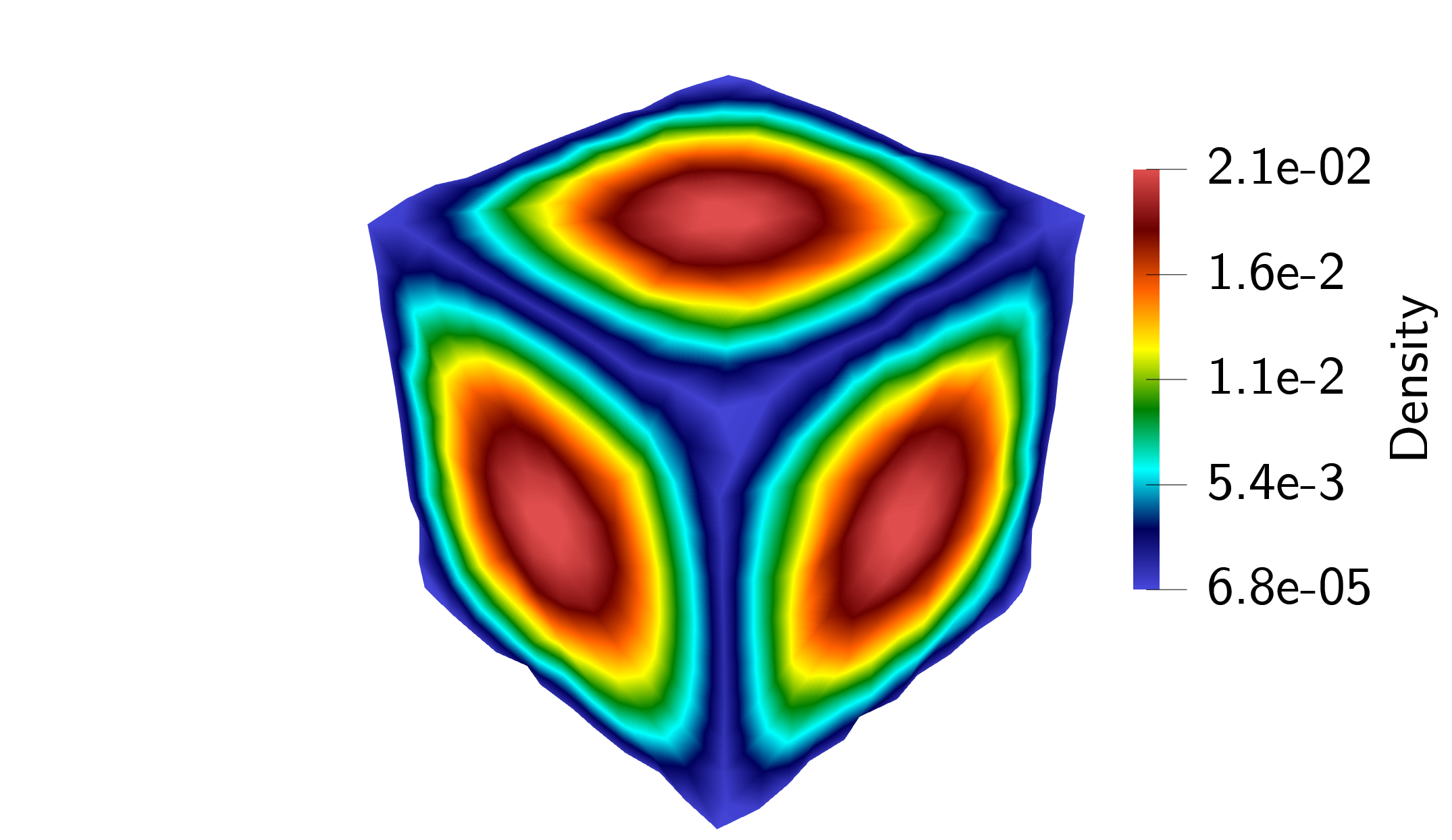}
        \caption{Level 3 (\num{1073} DoFs)}
        \label{fig:source_level_3}
    \end{subfigure}
    \hfill
    \begin{subfigure}[b]{0.32\textwidth}
        \centering
        \includegraphics[width=4.8cm,clip, trim=550 0 30 100]{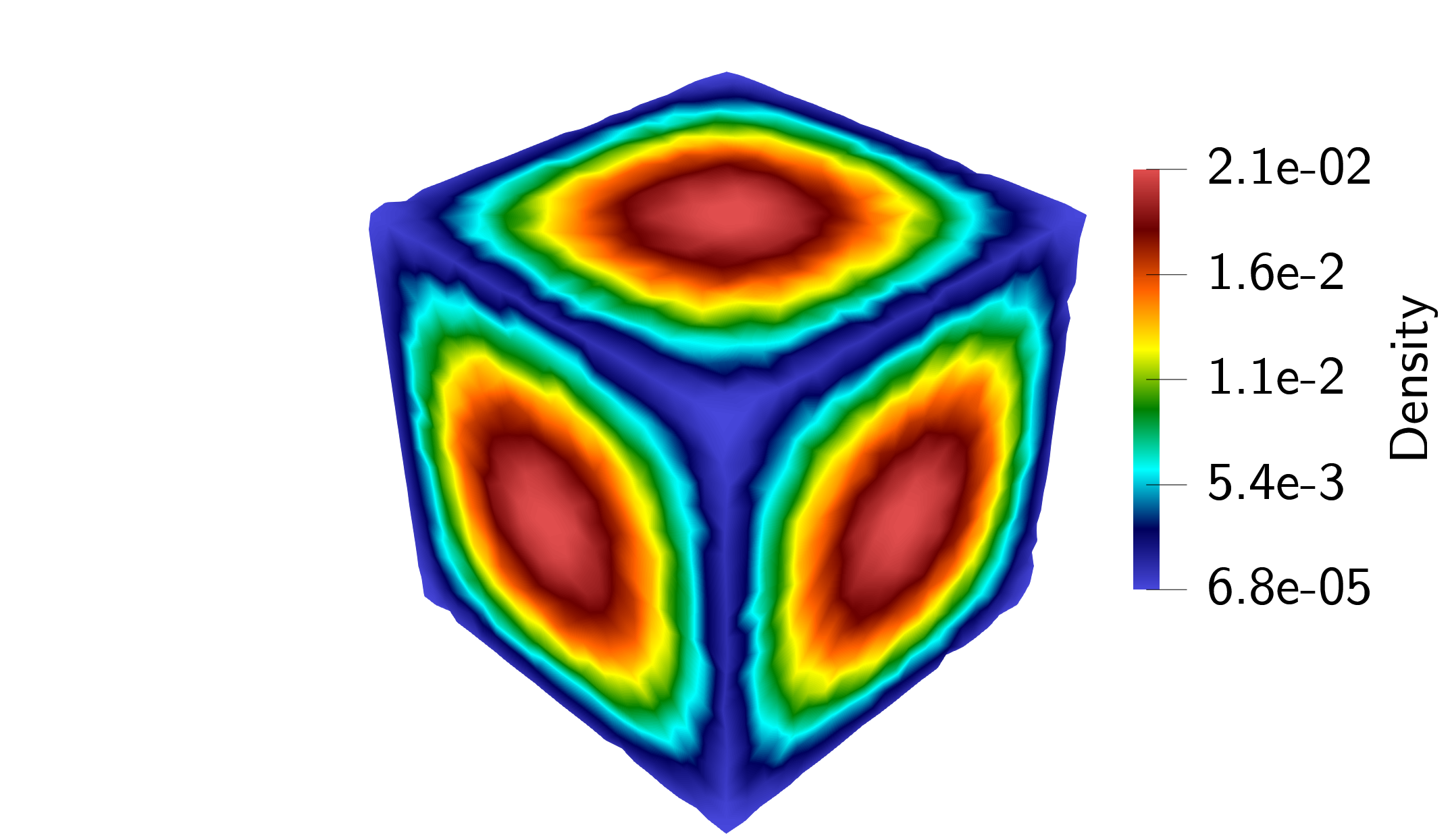}
        \caption{Level 2 (\num{5943} DoFs)}
        \label{fig:source_level_2}
    \end{subfigure}

    \vspace{0.5cm} %

    \begin{center}
    \begin{subfigure}[b]{0.40\textwidth}
        \centering
        \includegraphics[width=4.8cm,clip, trim=550 0 30 100]{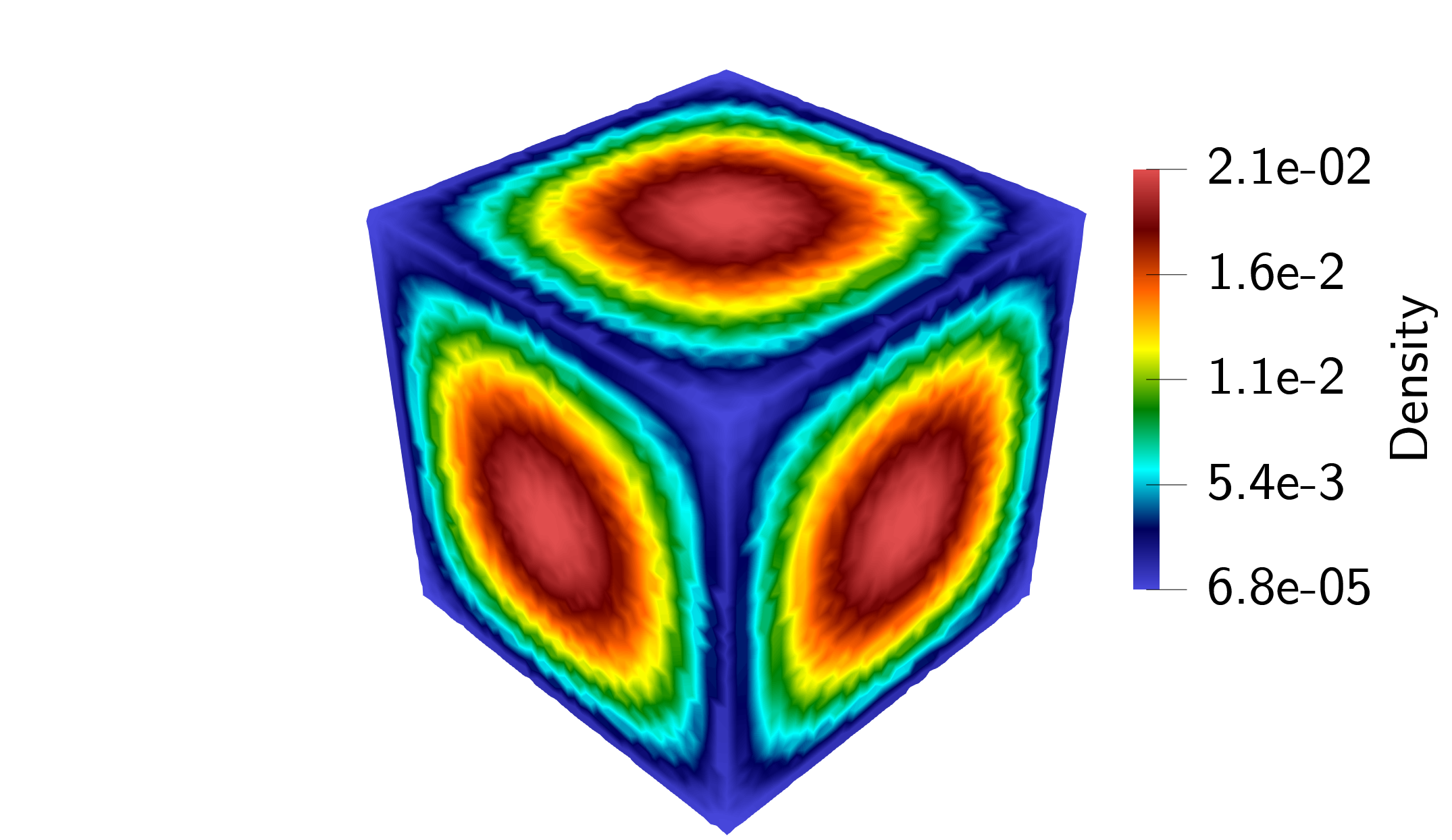}
        \caption{Level 1 (\num{32720} DoFs)}
        \label{fig:source_level_1}
    \end{subfigure}
    \hspace{-1cm} %
    \begin{subfigure}[b]{0.40\textwidth}
        \centering
        \includegraphics[width=4.8cm,clip, trim=550 0 30 100]{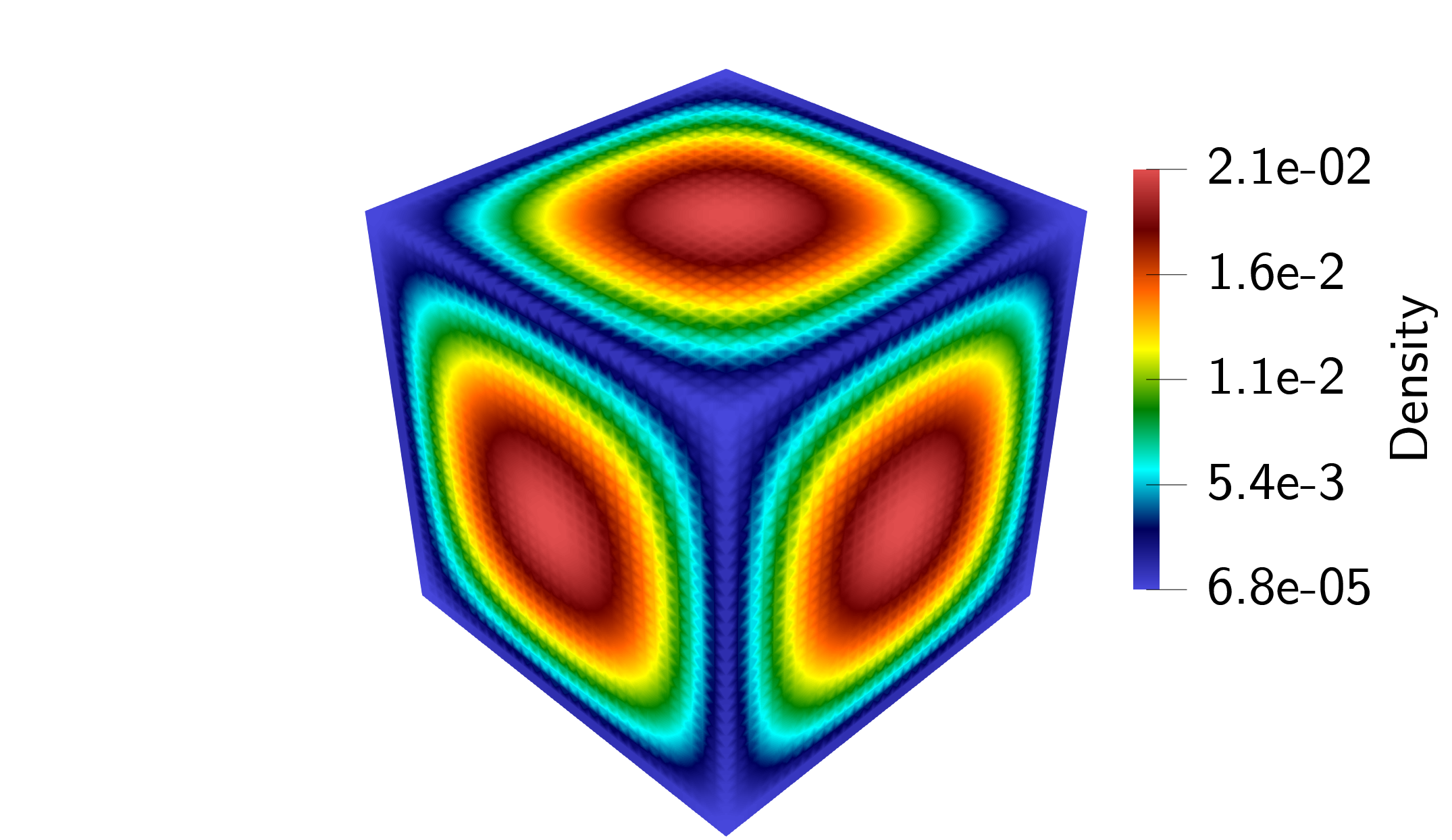}
        \caption{Level 0 (\num{137313} DoFs)}
        \label{fig:source_level_0}
    \end{subfigure}
    \end{center}
    \caption{Visualization of the source mesh hierarchy $\{\mathcal{T}_l\}_{l=0}^4$. Each panel shows a clip through the mesh structure at the corresponding level, indicating the progressive refinement and the associated number of degrees of freedom (vertices) for a $P_1$ discretization.}
    \label{fig:source_hierarchy_visualization}
\end{figure}

\begin{figure}[htbp]
    \centering
    \begin{subfigure}[b]{0.32\textwidth}
        \centering
        \includegraphics[width=4.8cm,clip, trim=16 12 5 0]{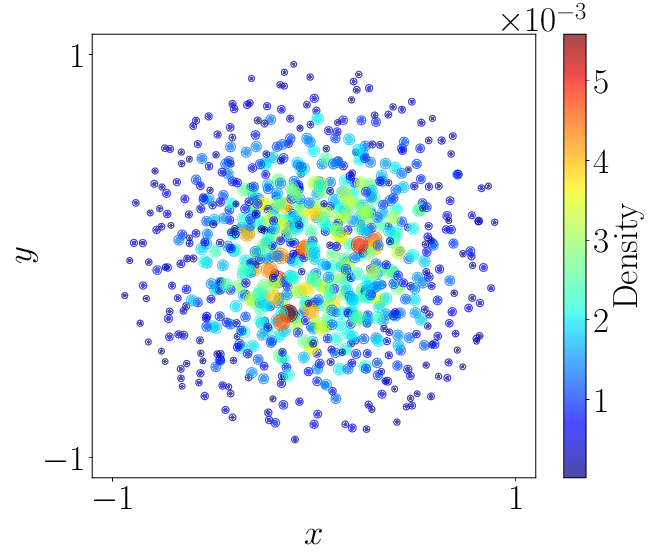}
        \caption{Level 4 (Coarsest, $N_4=781$)}
        \label{fig:target_level_4}
    \end{subfigure}
    \hfill
    \begin{subfigure}[b]{0.32\textwidth}
        \centering
        \includegraphics[width=4.8cm,clip, trim=16 12 5 0]{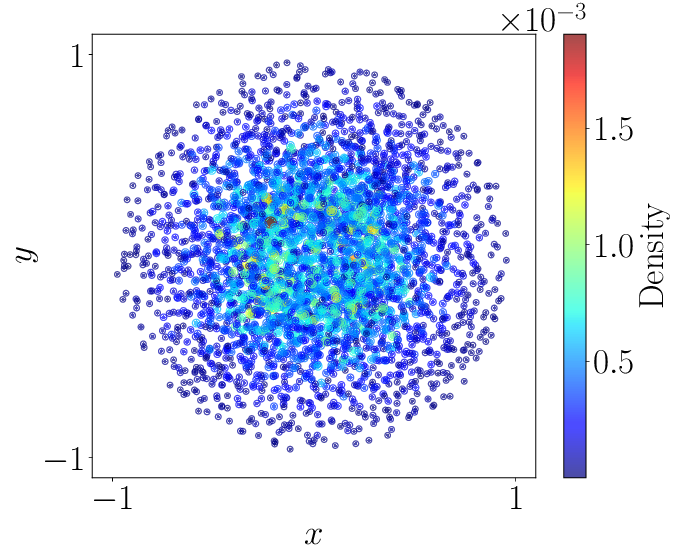}
        \caption{Level 3 ($N_3=\num{3125}$)}
        \label{fig:target_level_3}
    \end{subfigure}
    \hfill
    \begin{subfigure}[b]{0.32\textwidth}
        \centering
        \includegraphics[width=4.8cm,clip, trim=16 12 5 0]{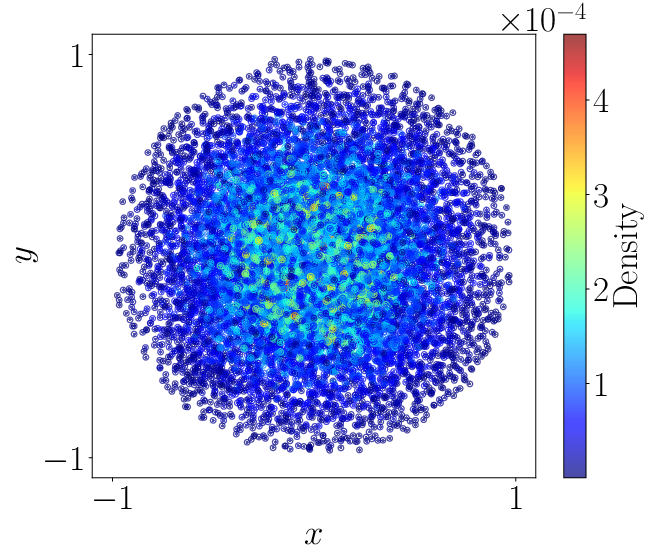}
        \caption{Level 2 ($N_2=\num{12500}$)}
        \label{fig:target_level_2}
    \end{subfigure}

    \vspace{0.5cm} %

    \begin{center}
    \begin{subfigure}[b]{0.40\textwidth}
        \centering
        \includegraphics[width=4.8cm,clip, trim=16 12 5 0]{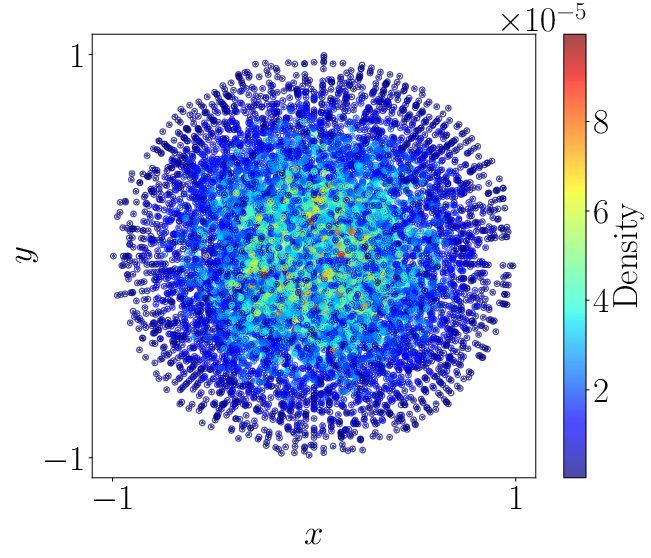}
        \caption{Level 1 ($N_1=\num{50000}$)}
        \label{fig:target_level_1}
    \end{subfigure}
    \hspace{-1cm} %
    \begin{subfigure}[b]{0.40\textwidth}
        \centering
        \includegraphics[width=4.8cm,clip, trim=16 12 5 0]{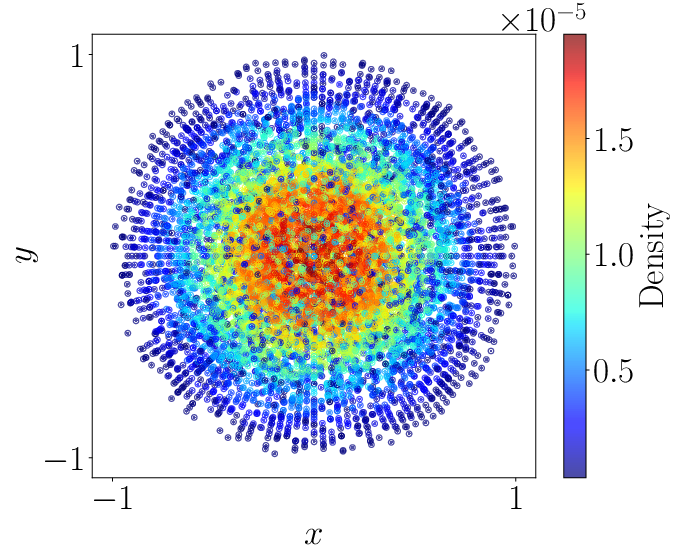}
        \caption{Level 0 (Finest, $N_0=\num{116305}$)}
        \label{fig:target_level_0}
    \end{subfigure}
    \end{center}
    \caption{Visualization of the discrete target measure hierarchy $\{\nu^{(l)}\}_{l=0}^4$ generated via k-means clustering. Each panel shows a scatter plot of the support points $\{y_j^{(l)}\}$ for the corresponding level, projected onto the $xy$ plane.}
    \label{fig:target_hierarchy_visualization}
\end{figure}

\begin{table}[htbp]
    \caption{Details of the 5-level Source Mesh and Target Measure Hierarchies.}
    \label{tab:hierarchies_details}
    \centering
    \begin{tabular}{c c r r c r}
        \toprule
        Level & \multicolumn{3}{c}{Source Mesh Hierarchy ($\mathcal{T}_l$)} & \phantom{abc} & \multicolumn{1}{c}{Target Measure Hierarchy ($\nu^{(l)}$)} \\
        \cmidrule(lr){2-4} \cmidrule(lr){6-6}
        Index ($l$) & & Cells & Vertices / DoFs ($P_1$) && Number of Points ($N_l$) \\
        \midrule
        4 (Coarsest) & & 671     & 199     && 781 \\
        3            & & \num{4559}   & \num{1073}   && \num{3125} \\
        2            & & \num{28962}  & \num{5943}   && \num{12500} \\
        1            & & \num{172933} & \num{32720}  && \num{50000} \\
        0 (Finest)   & & \num{786432} & \num{137313} && \num{116305} \\
        \bottomrule
    \end{tabular}
\end{table}

\begin{table}[htbp]
    \centering
    \resizebox{\textwidth}{!}{%
        \begin{tabular}{l c c c c}
            \toprule
            Strategy         & \multicolumn{2}{c}{Custom Density} & \multicolumn{2}{c}{Uniform Density}                               \\
            \cmidrule(lr){2-3} \cmidrule(lr){4-5}
                                                 & Time (s)                                    & Fine Level Iters (L0)         & Time (s) & Fine Level Iters (L0) \\
            \midrule
            Target-only Multilevel               & \SI{373}{\second}                           & 1                             & \SI{396}{\second}  & 1                \\
            Source-only Multilevel               & \SI{10450}{\second}                         & 65                            & \SI{876}{\second}  & 1                \\
            Combined Multilevel                  & \SI{785}{\second}                           & 2                             & \SI{239}{\second}  & 1                \\
            \midrule
            Standard Solver (Baseline)           & \SI{65369}{\second}                         & \num{1142}                    & $> \SI{24}{h}$    & \num{1372}       \\
                                                 &                                             &                               & (timeout)        & (not converged)  \\
            \bottomrule
        \end{tabular}%
    }
    \caption{Performance comparison of multilevel strategies. Times are total wall-clock seconds. Iterations refer to the count on the finest level (Level 0).}
    \label{tab:multilevel_comparison_results_detailed}
\end{table}

\begin{table}[htbp]
    \centering
    \definecolor{lowpct}{rgb}{0.95,0.95,1.0}
    \definecolor{midpct}{rgb}{0.8,0.8,1.0}
    \definecolor{highpct}{rgb}{0.4,0.4,0.9}
    \newcommand{\cellcol}[1]{%
        \ifdim#1pt<10pt\cellcolor{lowpct}#1\%
        \else\ifdim#1pt<30pt\cellcolor{midpct}#1\%
        \else\cellcolor{highpct}\textcolor{white}{#1\%}
        \fi\fi
    }
    \resizebox{\textwidth}{!}{%
        \begin{tabular}{l rrrrrr rrrrrr}
            \toprule
            & \multicolumn{6}{c}{Custom Density} & \multicolumn{6}{c}{Uniform Density} \\
            \cmidrule(lr){2-7} \cmidrule(lr){8-13}
            Strategy & L4 & L3 & L2 & L1 & L0 & Softmax & L4 & L3 & L2 & L1 & L0 & Softmax \\
            \midrule
            Source   & \cellcol{2.5} & \cellcol{2.2} & \cellcol{1.1} & \cellcol{21.1} & \cellcol{73.1} & \cellcol{0.0} & \cellcol{64.7} & \cellcol{10.1} & \cellcol{8.4} & \cellcol{3.5} & \cellcol{13.4} & \cellcol{0.0} \\
            Target   & \cellcol{5.8} & \cellcol{4.5} & \cellcol{1.9} & \cellcol{14.1} & \cellcol{28.1} & \cellcol{45.5} & \cellcol{4.8} & \cellcol{4.5} & \cellcol{6.5} & \cellcol{10.5} & \cellcol{32.2} & \cellcol{41.6} \\
            Combined & \cellcol{0.0} & \cellcol{0.1} & \cellcol{0.4} & \cellcol{16.4} & \cellcol{66.8} & \cellcol{16.3} & \cellcol{0.0} & \cellcol{0.1} & \cellcol{0.1} & \cellcol{3.7} & \cellcol{43.9} & \cellcol{52.1} \\
            \bottomrule
        \end{tabular}%
    }
    \par\medskip
    \caption{Percentage breakdown of wall-clock time spent per level and for Softmax refinement across different multilevel strategies and density types.}
    \label{tab:multilevel_time_breakdown}
    \footnotesize Note: Level indices L4..L0 represent levels from coarsest to finest. Softmax time is associated with potential refinement steps using Eq.~\eqref{eq:softmax_refinement_formula} in Target-only and Combined strategies. For Combined, percentages per level reflect solving on $(\mathcal{T}_l, \nu^{(l)})$.
\end{table}

\begin{figure}[htbp]
    \centering
    \begin{subfigure}[b]{0.48\textwidth}
        \centering
        \includegraphics[width=\textwidth]{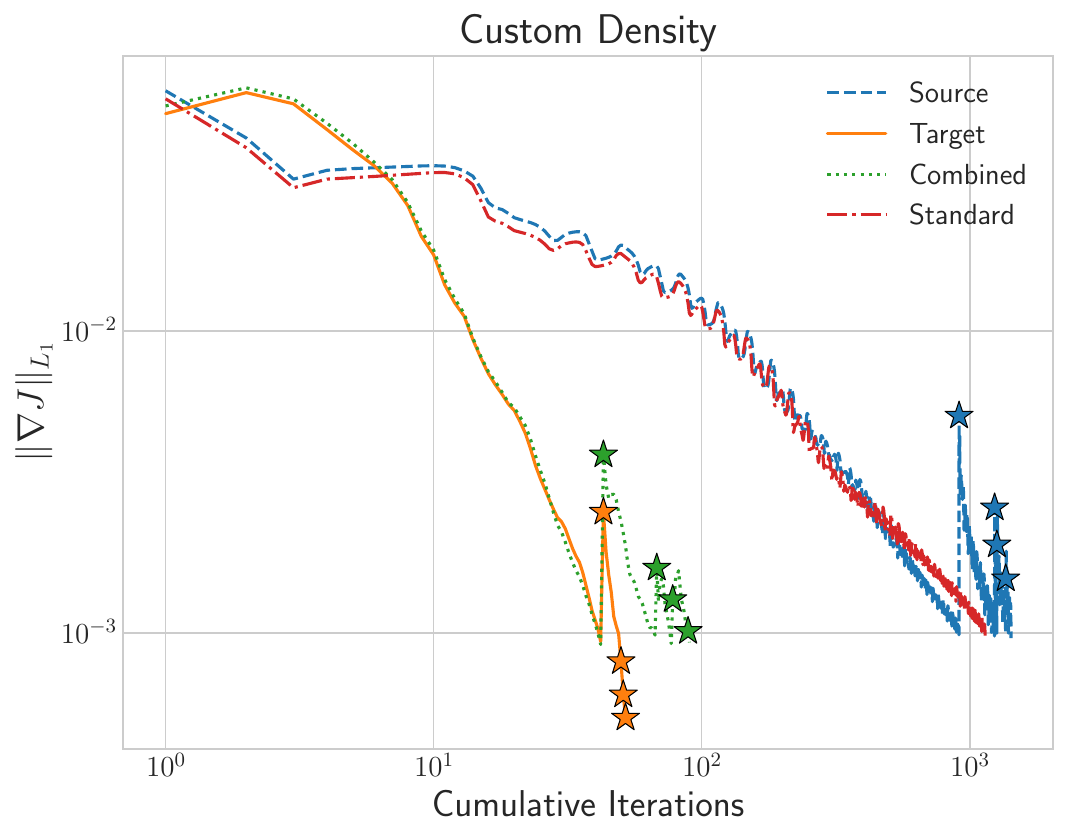}
        \caption{Custom Density}
        \label{fig:functional_evolution_custom}
    \end{subfigure}
    \hfill %
    \begin{subfigure}[b]{0.48\textwidth}
        \centering
        \includegraphics[width=\textwidth]{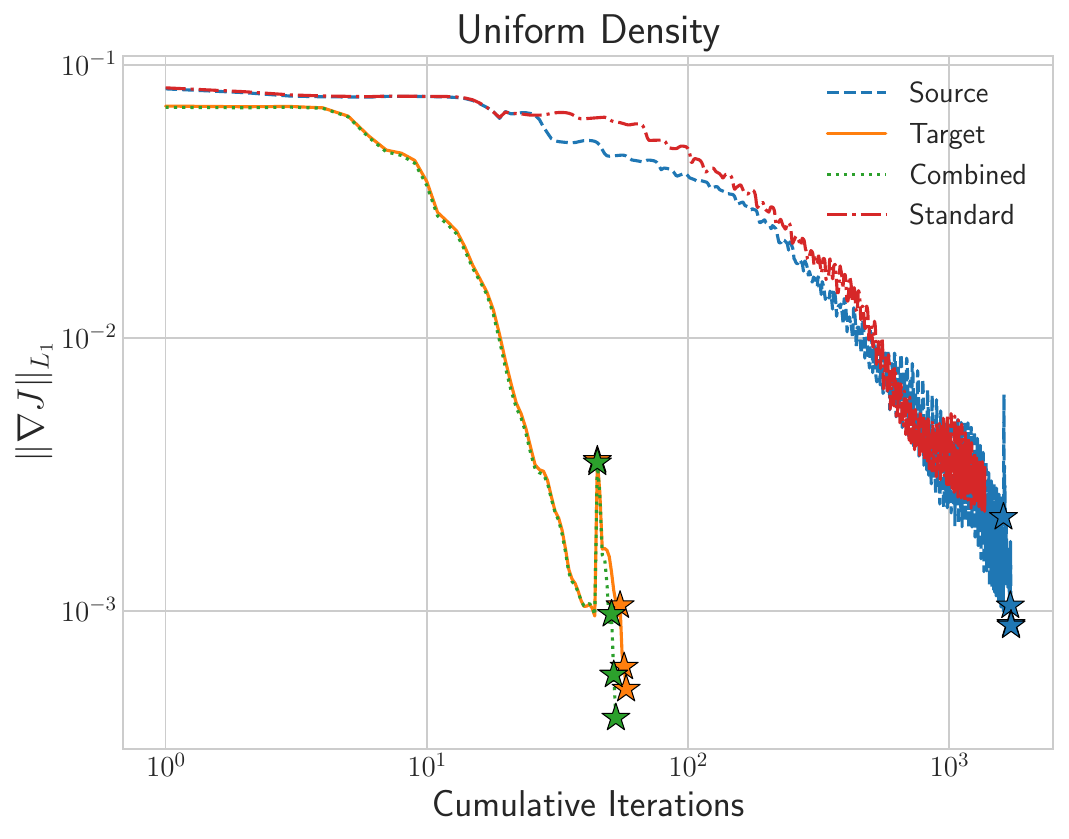}
        \caption{Uniform Density}
        \label{fig:functional_evolution_uniform}
    \end{subfigure}
    \caption{Convergence behavior of different multilevel strategies. The plots show the $L^1$ norm of the dual functional gradient $\|\nabla J_\varepsilon^h(\bpsi) \|_1$ as a function of the cumulative number of L-BFGS iterations across all levels. Vertical dashed lines and star markers indicate transitions between refinement levels, showcasing the rapid convergence upon refinement.}
    \label{fig:functional_evolution}
\end{figure}

\subsubsection{Epsilon Scaling and Convergence to Unregularized OT}
\label{subsubsec:epsilon_convergence}

This section investigates two aspects of the regularization parameter $\varepsilon$: the practical performance of the $\varepsilon$-scaling strategy (Section~\ref{sec:epsilon_scheduling}) and the convergence of the regularized solution to the unregularized one as $\varepsilon \to 0$.

First, we evaluated the effectiveness of $\varepsilon$-scaling for reaching small target regularization parameters ($\varepsilon_{\text{target}} \in \{\SI{1e-6}{}, \SI{1e-7}{}\}$). These tests were run on a problem with a source mesh of \num{98304} cells (\num{17969} DoFs) and a target measure of $N=\num{14761}$ points. We compared a direct solve against a continuation method involving 1, 2, or 3 intermediate solves. This method starts with a larger $\varepsilon$ and reduces it by a factor of 10 at each step, using the previous solution as a warm start. As shown in Table~\ref{tab:bench_epsilon_scaling_iters}, this warm-start strategy drastically reduces the number of L-BFGS iterations required on the final, most challenging solve. However, as Figure~\ref{fig:bench_epsilon_scaling_time_plot} illustrates, total computation time presents a trade-off due to the overhead of the intermediate solves. For the complex custom density at $\varepsilon=\SI{1e-7}{}$, a 2-step strategy proved optimal, yielding a \SI{29.5}{\percent} speedup over the direct solve. For the simpler uniform density, a 1-step strategy was consistently fastest, providing up to a \SI{21.3}{\percent} speedup. These results confirm that $\varepsilon$-scaling is a valuable tool for improving both robustness and efficiency, where the optimal number of steps depends on problem complexity and the target $\varepsilon$.

\begin{table}[htbp]
    \caption{Epsilon Scaling Performance: Final Level L-BFGS Iterations. "$N_s$ scaling steps" denotes a strategy with $N_s$ intermediate solves (total $N_s+1$ solves), starting from an initial $\varepsilon = \varepsilon_{\text{target}} \cdot 10^{N_s}$ and reducing $\varepsilon$ by a factor of 10 at each step until $\varepsilon_{\text{target}}$ is reached.}
    \label{tab:bench_epsilon_scaling_iters}
    \centering
    \begin{tabular}{l S[table-format=3] S[table-format=4] S[table-format=3] S[table-format=3]}
        \toprule
        Scaling Strategy & \multicolumn{2}{c}{Custom Density} & \multicolumn{2}{c}{Uniform Density} \\
        \cmidrule(lr){2-3} \cmidrule(lr){4-5}
        & {$\varepsilon=\SI{1e-6}{}$} & {$\varepsilon=\SI{1e-7}{}$} & {$\varepsilon=\SI{1e-6}{}$} & {$\varepsilon=\SI{1e-7}{}$} \\
        \midrule
        No scaling        & 961 & 1364 & 398 & 623 \\
        1 scaling step    & 79  & 283  & 42  & 88  \\
        2 scaling steps   & 65  & 99   & 39  & 143 \\
        3 scaling steps   & 48  & 435  & 47  & 113 \\
        \bottomrule
    \end{tabular}
\end{table}

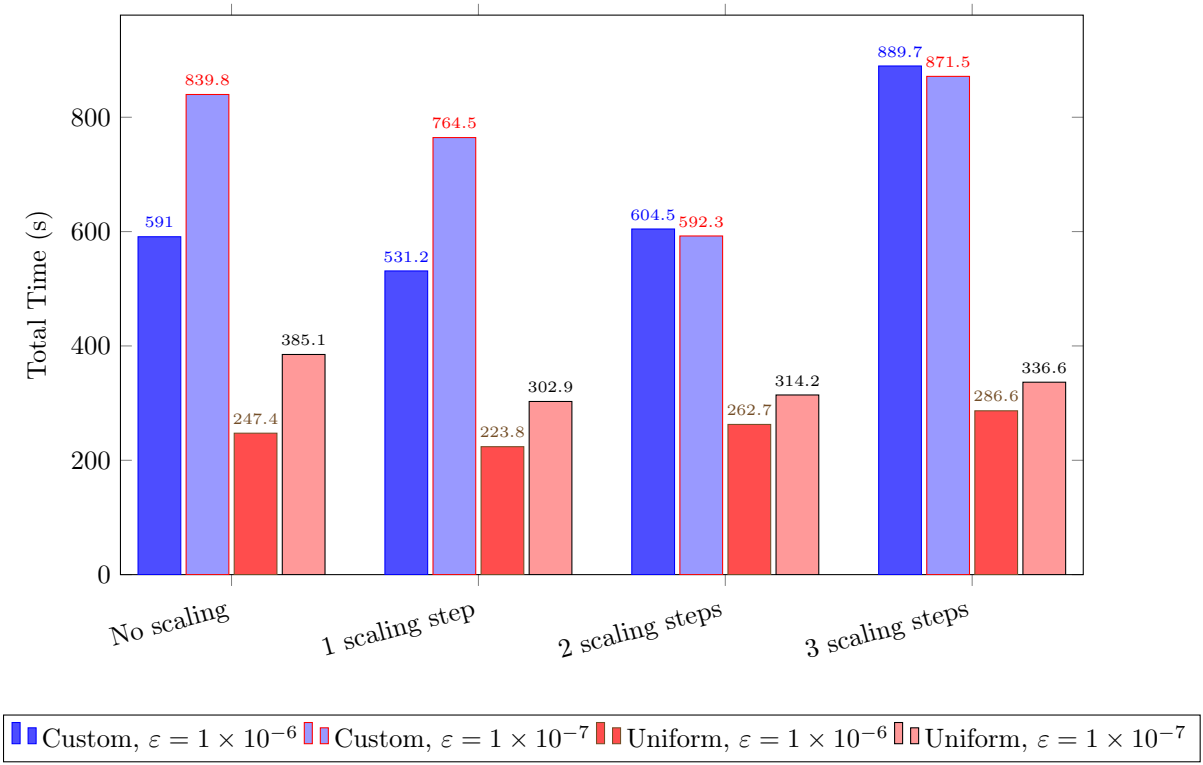
\begin{figure}[htbp]
    \centering
    \begin{tikzpicture}
    \begin{axis}[
        width=0.95\textwidth, %
        height=0.6\textwidth, %
        ybar, %
        bar width=16pt, %
        enlarge x limits=0.15, %
        ylabel={Total Time (s)},
        symbolic x coords={No scaling, 1 scaling step, 2 scaling steps, 3 scaling steps},
        xtick=data,
        xticklabel style={rotate=15, anchor=north east}, %
        nodes near coords, %
        nodes near coords align={vertical},
        nodes near coords style={font=\tiny, /pgf/number format/.cd,fixed,precision=1}, %
        legend style={at={(0.5,-0.25)}, anchor=north, legend columns=-1}, %
        ymin=0, %
    ]
    \addplot+[fill=blue!70!white] coordinates {
        (No scaling, 590.99)
        (1 scaling step, 531.17)
        (2 scaling steps, 604.54)
        (3 scaling steps, 889.66)
    };
    \addlegendentry{Custom, $\varepsilon=\SI{1e-6}{}$}

    \addplot+[fill=blue!40!white] coordinates {
        (No scaling, 839.80)
        (1 scaling step, 764.45)
        (2 scaling steps, 592.33)
        (3 scaling steps, 871.51)
    };
    \addlegendentry{Custom, $\varepsilon=\SI{1e-7}{}$}

    \addplot+[fill=red!70!white] coordinates {
        (No scaling, 247.41)
        (1 scaling step, 223.80)
        (2 scaling steps, 262.72)
        (3 scaling steps, 286.55)
    };
    \addlegendentry{Uniform, $\varepsilon=\SI{1e-6}{}$}

    \addplot+[fill=red!40!white] coordinates {
        (No scaling, 385.06)
        (1 scaling step, 302.87)
        (2 scaling steps, 314.23)
        (3 scaling steps, 336.58)
    };
    \addlegendentry{Uniform, $\varepsilon=\SI{1e-7}{}$}
    \end{axis}
    \end{tikzpicture}
    \caption{Total wall-clock time for different $\varepsilon$-scaling strategies, density types, and target $\varepsilon$ values.}
    \label{fig:bench_epsilon_scaling_time_plot}
\end{figure}

Second, we verified the convergence of the regularized potential $\bpsi^*_\varepsilon$ to the unregularized potential $\bpsi^*_0$. For this test, we computed solutions for the uniform density case for $\varepsilon$ from $10^1$ down to $10^{-8}$, leveraging the $\varepsilon$-scaling strategy to reach the smallest values. A key advantage of this continuation method is that it efficiently yields the entire sequence of solutions for all intermediate $\varepsilon$ values. These potentials were compared against a ground-truth solution $\bpsi^*_0$ obtained from an exact SOT solver based on power diagrams~\cite{Aurenhammer1991, levy2015numerical}, computed using the Geogram library~\cite{Levy2012Geogram}. This analysis was limited to the uniform density case due to the exact solver's incompatibility with our custom density's FE representation.

Figure~\ref{fig:bench_eps_convergence} plots the relative $L^2$ error $\|\bpsi^*_\varepsilon - \bpsi^*_0\|_2 / \|\bpsi^*_0\|_2$ versus $\varepsilon$. The results clearly show that the regularized solution converges to the unregularized one as $\varepsilon \to 0$. The error reaches its minimum of \num{8.0e-4} at $\varepsilon \approx \SI{1e-4}{}$. For smaller $\varepsilon$ values, the error plateaus around \num{1.1e-3}, a behavior likely caused by numerical precision limits and the increasing ill-conditioning of the problem. This confirms that our RSOT solver effectively approximates the true SOT solution, with the closest agreement found near $\varepsilon=\SI{1e-4}{}$ for this problem.

\begin{figure}[htbp]
    \centering
    \includegraphics[width=0.8\textwidth]{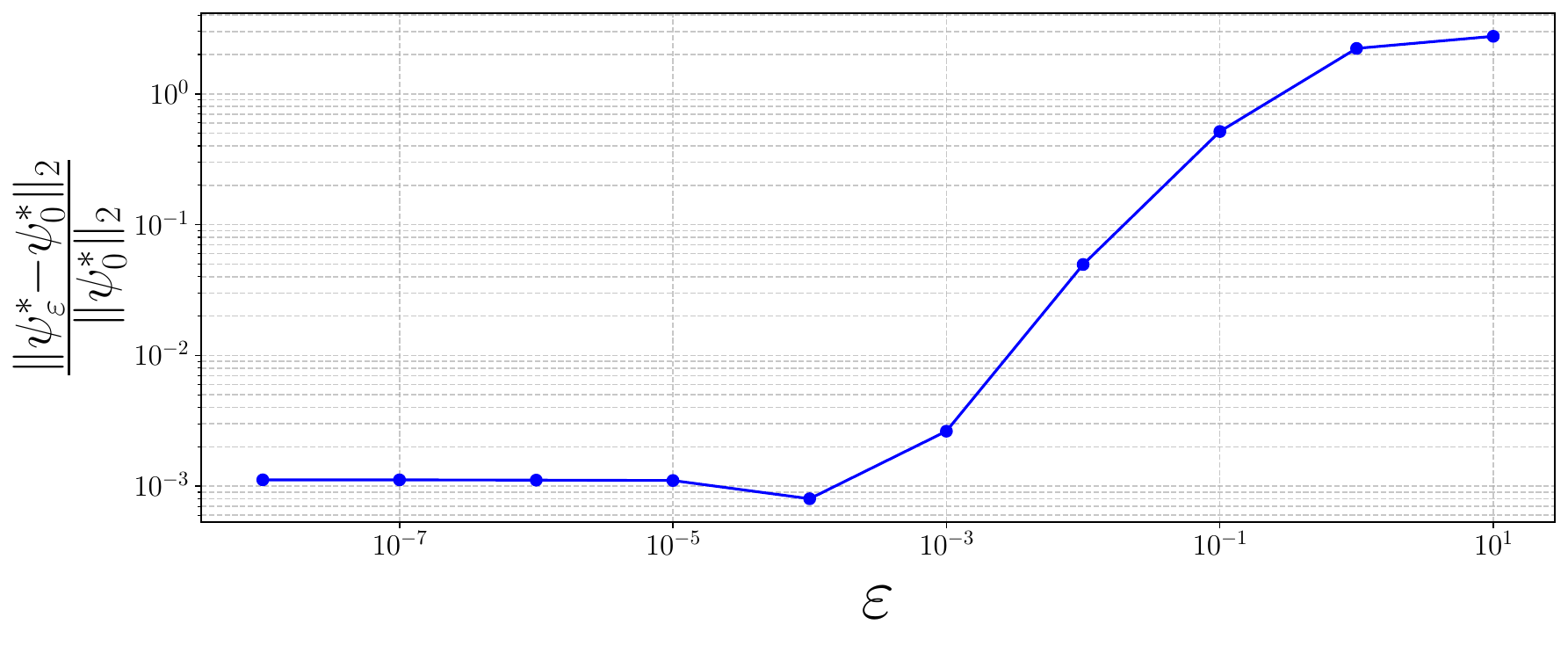} %
    \caption{Relative $L^2$ error between the regularized potential $\bpsi^*_\varepsilon$ and unregularized potential $\bpsi^*_0$ vs. $\varepsilon$.}
    \label{fig:bench_eps_convergence}
\end{figure}

\subsubsection{HPC Scalability}
\label{subsubsec:scalability}

This section assesses the parallel performance of our framework using the combined multilevel strategy and acceleration techniques. All HPC experiments were conducted on the Cineca G100 cluster, where each compute node has 48 cores, using a regularization parameter of $\varepsilon = \SI{1e-2}{}$. We analyze both strong and weak scaling to evaluate efficiency and scalability.

For strong scaling, we fixed the problem size (\num{17969} source DoFs, $N=\num{14761}$ target points) and measured the wall time $T(P)$ while scaling the number of compute nodes $P$ from 1 to 15. From this, we calculated the speedup, $S(P) = T(1)/T(P)$, and parallel efficiency, $E(P) = S(P)/P \times 100\%$. The results, presented in Figure~\ref{fig:bench_strong_scaling_plot}, show the framework achieves a speedup of \num{9.27} on 15 nodes, corresponding to a parallel efficiency of \SI{61.8}{\percent}. This efficiency demonstrates robust strong scaling, with the expected performance drop attributed to communication overhead and serial L-BFGS components becoming more prominent at scale.

\begin{figure}[htbp]
    \centering
    \includegraphics[width=1\textwidth]{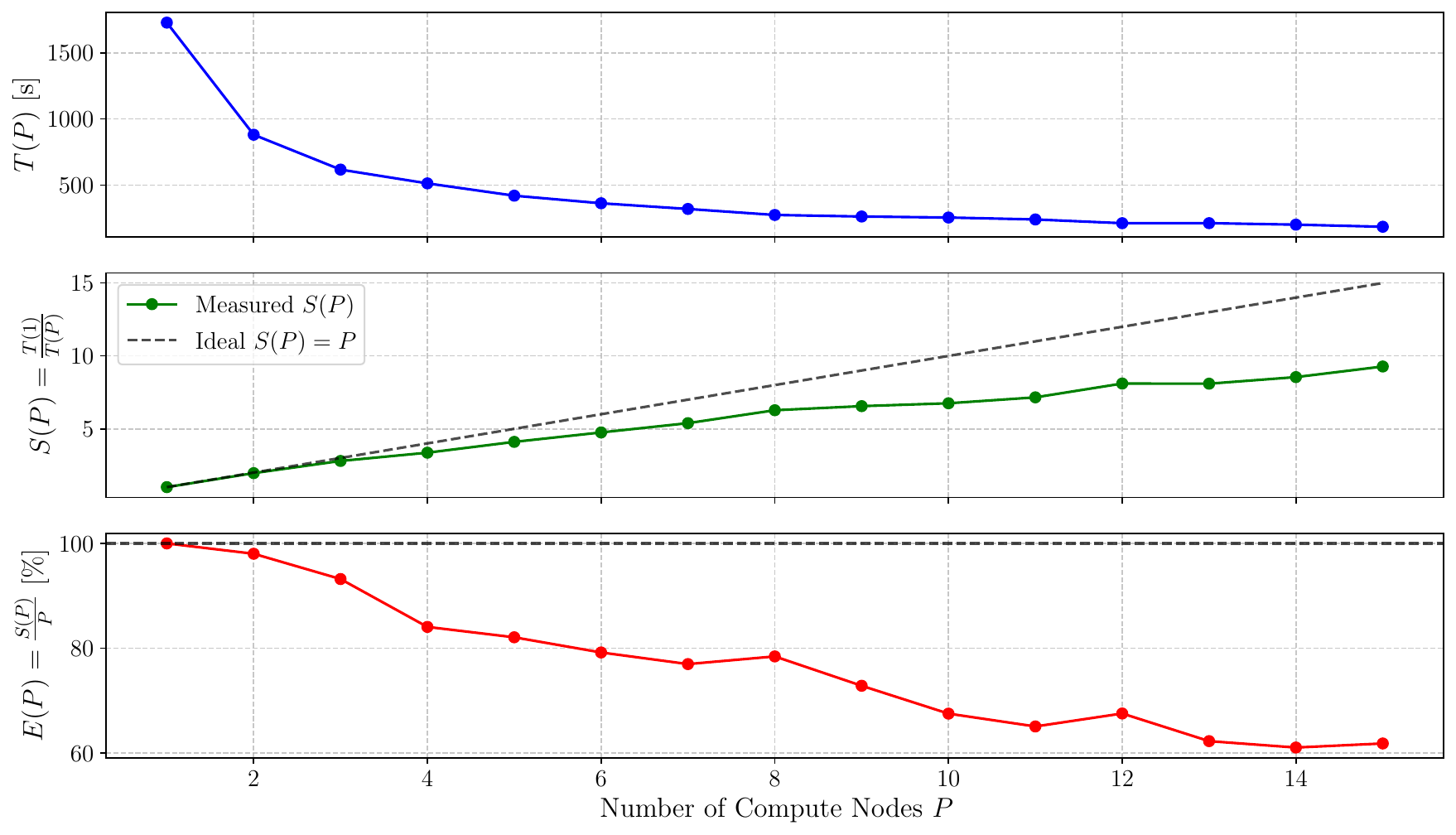}
    \caption{Strong scaling performance for a fixed problem size. Plots show Wall Time $T(P)$, Speedup $S(P)$, and Efficiency $E(P)$ versus the number of compute nodes $P$.}
    \label{fig:bench_strong_scaling_plot}
\end{figure}

For weak scaling, we increased the problem size proportionally with the number of cores to maintain a constant workload per core (see Table~\ref{tab:weak_scaling_setup}). We tested two scenarios: a pure multithreaded (OpenMP) setup on a single node, and a hybrid MPI+OpenMP setup across multiple nodes. In our parallel model, the source mesh is partitioned across MPI processes, while the target point set is replicated on each node.

The results are presented in Figure~\ref{fig:bench_weak_scaling_plot}. On a single node, the pure OpenMP implementation exhibits nearly ideal weak scaling, demonstrating the efficiency of shared-memory parallelism with dynamic load balancing. The hybrid MPI+OpenMP version shows a slight performance degradation (a \SI{13}{\percent} increase in normalized time on 4 nodes). This is attributed to work imbalance from the static MPI-level domain decomposition; since the entire non-uniform target point set is replicated on each node, a process assigned a source subdomain that interacts with a denser region of the target measure has a disproportionately higher computational load, forcing other processes to wait at global synchronization points. Despite this, the performance is strong, confirming the framework's capability to efficiently solve large-scale problems on distributed-memory systems.

\begin{table}[htbp]
    \centering
    \caption{Weak scaling experiment configurations, showing the per-core workload for each parallel strategy.}
    \label{tab:weak_scaling_setup}
    \begin{tabular}{l r r r r r r}
        \toprule
        & & & \multicolumn{2}{c}{Multithread (Single-Node)} & \multicolumn{2}{c}{Multiprocess (Hybrid)} \\
        \cmidrule(lr){4-5} \cmidrule(lr){6-7}
        Mesh & Cells & Vertices & Cores & Cells/Core & Cores & Cells/Core \\
        \midrule
        1 & \num{41672}  & \num{8376}  & 12 & $\approx\num{3470}$ & 48  & $\approx\num{870}$ \\
        2 & \num{89125}  & \num{17321} & 24 & $\approx\num{3710}$ & 96  & $\approx\num{930}$ \\
        3 & \num{130228} & \num{24902} & 36 & $\approx\num{3620}$ & 144 & $\approx\num{900}$ \\
        4 & \num{171884} & \num{32541} & 48 & $\approx\num{3580}$ & 192 & $\approx\num{900}$ \\
        \bottomrule
    \end{tabular}
\end{table}

\begin{figure}[htbp]
    \centering
    \begin{tikzpicture}
    \begin{axis}[
        width=0.8\textwidth,
        height=0.5\textwidth,
        xlabel={Total Number of Cores},
        ylabel={Normalized Execution Time},
        xmin=0, xmax=200,
        ymin=0.9, ymax=1.2,
        xtick={12, 24, 36, 48, 96, 144, 192},
        xticklabels={12, 24, 36, 48, 96, 144, 192},
        ytick={0.9, 0.95, 1.0, 1.05, 1.1, 1.15, 1.2},
        legend pos=north west,
        ymajorgrids=true,
        grid style=dashed,
        cycle list name=exotic,
    ]

    \addplot[
        color=blue,
        mark=square,
        ]
        coordinates {
        (48,1.0) (96,1.0865) (144,1.1288) (192,1.1276)
        };
        \addlegendentry{Multiprocess (Hybrid MPI+OpenMP)}

    \addplot[
        color=red,
        mark=triangle,
        ]
        coordinates {
        (12,1.0) (24,0.9626) (36,0.9738) (48,0.9939)
        };
        \addlegendentry{Multithread (Single-Node OpenMP)}

    \end{axis}
    \end{tikzpicture}
    \caption{Weak scaling performance. The "Multiprocess" line shows hybrid MPI+OpenMP scaling across multiple nodes. The "Multithread" line shows pure OpenMP scaling on a single node. Ideal scaling corresponds to a constant normalized time of 1.0.}
    \label{fig:bench_weak_scaling_plot}
\end{figure}
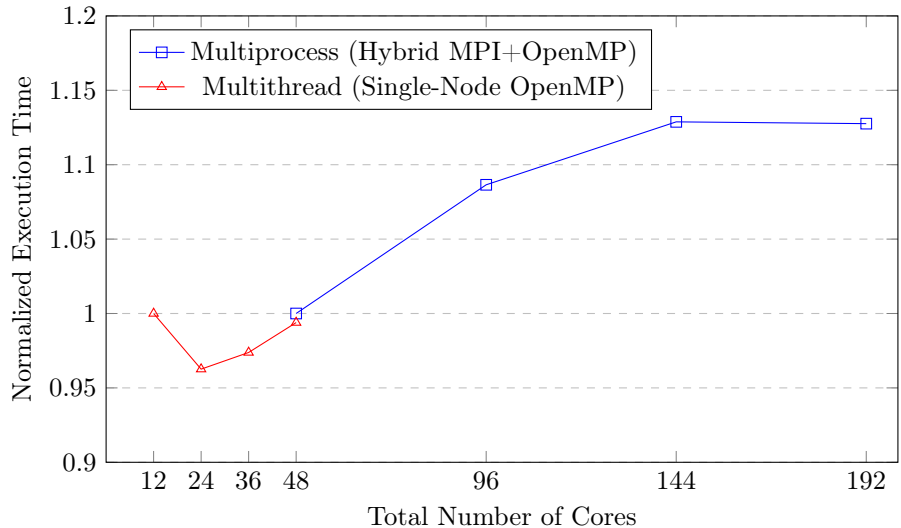  %
\subsection{Application: Wasserstein Barycenter}
\label{subsec:barycenter_application}

A powerful application of Optimal Transport lies in the ability to define and compute geometric averages of probability measures. This is achieved through the concept of the Wasserstein barycenter, a notion rigorously introduced and analyzed by Agueh and Carlier \cite{Agueh2011}. The barycenter provides a principled way to interpolate between and find the "mean" of a set of distributions, preserving their underlying geometric structure in a way that simple linear averaging cannot. This capability has found direct application in fields like computer graphics, where advanced techniques such as Wasserstein Blue Noise Sampling \cite{Qin2017} frame multi-class sampling itself as the computation of a constrained Wasserstein barycenter (see Section~\ref{subsec:blue_noise}).

However, the computation of these barycenters is exceptionally demanding.
Usually iterative methods are employed, requiring the solution of a full Optimal Transport problem from each source measure to the current barycenter estimate \emph{at every single iteration}. When dealing with complex, high-resolution source measures, such as those discretized by fine FE meshes, this nested optimization loop presents a formidable computational barrier, often rendering the problem intractable without significant acceleration.

We demonstrate the capability of our RSOT framework to efficiently compute Wasserstein barycenters by integrating it with an iterative optimization scheme. This showcases how the RSOT solver can serve as a core component in more complex, nested optimization problems while leveraging the multilevel acceleration strategies developed in this work.

The entropy-regularized Wasserstein barycenter $\nu^*$ of a set of measures $\{\mu_i\}_{i=1}^K$ with weights $\{\lambda_i\}_{i=1}^K$ (where $\sum \lambda_i = 1$) is the measure that minimizes the weighted sum of regularized transport costs:
\begin{equation}
    \label{eq:barycenter_objective}
    \nu^* = \argmin_{\nu \in \Prob(\R^d)} \sum_{i=1}^K \lambda_i \W_{\varepsilon,c}(\mu_i, \nu).
\end{equation}
We seek a discrete barycenter $\nu = \sum_{j=1}^{N_b} \nu_j \delta_{y_j}$ with fixed uniform weights $\nu_j = 1/N_b$, which transforms the problem into an optimization over the support point locations $Y = \{y_j\}_{j=1}^{N_b}$.

For the squared Euclidean cost, $c(x,y) = \frac{1}{2}\|x-y\|^2$, the optimality condition for a barycenter point $y_j^*$ leads to a fixed-point equation:
\begin{equation}
    \label{eq:barycenter_fixed_point}
    y_j^* = \sum_{i=1}^K \lambda_i T_{i,j}, \quad \text{where} \quad T_{i,j} = \frac{1}{\nu_j} \int_{\OmegaDomain} x \, \dd\pi_{i,j,\varepsilon}^*(x).
\end{equation}
Here, $\pi_{i,j,\varepsilon}^*$ is the component of the optimal transport plan $\pi_{i,\varepsilon}^*$ between $\mu_i$ and $\nu$ that transports mass to the location $y_j$ (see Remark~\ref{rem:semi_discrete_plans}), and $T_{i,j}$ is the corresponding conditional barycenter (centroid). This condition motivates a Lloyd-type fixed-point iteration, which we stabilize with a damping parameter $\theta$:
\begin{equation}
\label{eq:damped_lloyd}
y_j^{(t+1)} = (1-\theta)y_j^{(t)} + \theta \left(\sum_{i=1}^K \lambda_i T_{i,j}^{(t)}\right).
\end{equation}
The conditional barycenters $T_{i,j}^{(t)}$ are computed at each iteration using the optimal potentials $\bpsi_i^{(t)}$ obtained from solving the respective RSOT subproblems.

In the general case of Wasserstein barycenters, for which an update of the support point locations and discrete mass of the target measure needs to be performed, we refer to Algorithm~\ref{alg:wasserstein_barycenters}. In case $\Omega$ is an arbitrary Riemannian manifold, e.g. the sphere with the geodesic distance, the evaluation of the gradient of the support points $\mathbf{y}$ and their location update, should be carried out on the manifold, taking into account the metric information.

\begin{remark}[Lloyd's Algorithm vs. Gradient Descent]
The fixed-point iteration in Eq.~\eqref{eq:damped_lloyd} is a specialized and highly efficient method for the quadratic cost setting. More generally, one could minimize the barycenter objective $F_\varepsilon(Y)$ using a standard gradient descent scheme: $y_j^{(t+1)} = y_j^{(t)} - \alpha \nabla_{y_j} F_\varepsilon(Y^{(t)})$, where $\alpha$ is a learning rate. The required gradient is given by
\begin{equation}
\label{eq:barycenter_gradient_general}
\nabla_{y_j} \W_{\varepsilon,c}(\mu_i, \nu) =
\int_{\OmegaDomain} \nabla_{y_j} c(x, y_j) \, d\pi_{i,j,\varepsilon}^*(x).
\end{equation}
An analogous results holdf for $\nabla_\nu\mathcal{W}_{\varepsilon, c}$, and is a consequence of Danskin's Theorem, an envelope theorem for optimization problems. It provides a powerful result for computing the gradient of an optimal value with respect to a parameter, such as the support point location $y_j$. The theorem states that the gradient is simply the integral of the cost function's gradient, $\nabla_{y_j} c(x, y_j)$, evaluated under the optimal transport plan $\pi$. The indirect dependence of the optimal plan itself on $y_j$ does not contribute to the final expression. For the specific quadratic cost $c(x,y_j) = \frac{1}{2}\|x-y_j\|^2$, the cost gradient is $\nabla_{y_j} c(x,y_j) = y_j - x$. Setting the full barycenter gradient to zero yields the optimality condition in Eq.~\eqref{eq:barycenter_fixed_point}. The Lloyd's algorithm can thus be seen as an iterative method that directly seeks to satisfy this first-order optimality condition. While gradient descent is more general and applicable to any differentiable cost function, the Lloyd-type approach is often preferred for the quadratic case as it avoids the need to tune the learning rate.
A comparison between the two methods is presented in \href{https://github.com/SemiDiscreteOT/SemiDiscreteOT/tree/master/examples/tutorial_4}{Tutorial 4} of the \texttt{SemiDiscreteOT} library.
\end{remark}

\begin{algorithm}[ht]
    \caption{Wasserstein Barycenters}
    \label{alg:wasserstein_barycenters}
    \begin{algorithmic}[1]
    \Require Source meshes $\{\calT_h\}_{i=1}^{K}$, measures $\{\mu_{i}\}_{i=1}^{K}$ with densities $\{\rho_{h,i}\}_{i=1}^{K}$, quadrature $\{\{x_{q,i}, w_{q,i}\}_{q=1}^{N_{q,i}}\}_{i=1}^{K}$, initial guess $\mathbf{y}^{t=0}=\{y^{t=0}_j\}_{j=1}^N, \nu^{t=0}=\{\nu^{t=0}_j\}_{j=1}^N$, cost function $c(\cdot, \cdot)$, $\varepsilon > 0$, tolerances $\delta_{\text{tol}, \nu}$, $\delta_{\text{tol}, y}$, Wasserstein barycenter weights $\{\lambda_i\}_{i=1}^{K}$, steps $\alpha,\beta$.
    \Ensure Wasserstein barycenter described as discrete measure $\{y^*_j, \nu^*_j\}_{j=1}^N$.

    \Repeat (While $\lVert\delta\nu\rVert_2 > \delta_{\text{tol}, \nu}\lVert\nu\rVert_2$,  $\lVert\delta \mathbf{y}\rVert_2 > \delta_{\text{tol}, \mathbf{y}}\lVert\mathbf{y}\rVert_2$ and $t<T_{\text{max iterations}}$)
        \ForAll{$i$ in K}
            Solve regularized semi-discrete optimal transport between the source measures $\{\mu_{i}\}_{i=1}^{K}$ and the target $(\nu^t, \mathbf{y}^t)$, minimizing $\{J^h_{\varepsilon}(\bpsi_i)\}_{i=1}^K$ with L-BFGS.
        \EndFor
        \State Compute gradient of discrete weighted Wasserstein distances: $\delta\nu\leftarrow \sum_{i=1}^K\lambda_i \nabla_\nu\mathcal{W}_{\varepsilon, c}(\mu_i, \nu)$.
        \State Positivity preserving update: $\nu \leftarrow \nu^t\odot\exp(-\alpha \nu^t\odot\delta \nu^t)$.
        \State Normalization: $\nu^{t+1} \leftarrow \nu^t \oslash \lVert\nu^t\rVert_1$.
        \State Compute gradient of discrete weighted Wasserstein distances: $\delta\mathbf{y}\leftarrow \sum_{i=1}^K\lambda_i \nabla_{\mathbf{y}}\mathcal{W}_{\varepsilon, c}(\mu_i, \nu)$.
        \State Update support points: $\mathbf{y}^{t+1}\leftarrow\mathbf{y}^t-\beta\delta\mathbf{y}$.
        \State t = t + 1.
    \Until{convergence to $(\mathbf{y}^*, \nu^*)$.}
    \end{algorithmic}
\end{algorithm}

\begin{remark}[Optimal Quantization as a Barycenter Problem]
The Lloyd's algorithm is also the standard method for the optimal quantization of a single continuous measure $\mu$, which can be viewed as a barycenter problem with a single source ($K=1$). In this setting, the location update step involves computing a conditional barycenter for each point. While this operation corresponds to a simple centroid for the squared Euclidean cost, it requires solving a more complex optimization problem for general non-Euclidean costs. We demonstrate this general case in detail in Section~\ref{subsec:blue_noise}, where we apply the framework to blue noise sampling on a sphere using a geodesic distance cost.
\end{remark}

\subsubsection{Computational Strategy and Results}

We apply this methodology to compute the barycenter between two source probability measures, $\mu_1$ and $\mu_2$. Following the uniform density scenario outlined in Section~\ref{subsec:benchmarking}, each measure is defined by a constant density over its respective 3D vascular geometry domain, $\Omega_1$ and $\Omega_2$, such that $\dd\mu_i(x) = (1/\text{Vol}(\Omega_i)) \dd x$ for $i=1,2$. The domains, visualized in Figure~\ref{fig:source_geometries}, are discretized by finite element meshes $\mathcal{T}_h^1$ and $\mathcal{T}_h^2$ (see Table~\ref{tab:source_meshes} for details). We aim to compute a discrete barycenter supported on $N_b = \num{10000}$ points.

\begin{table}[ht]
    \centering
    \begin{tabular}{lccc}
        \hline
        \textbf{Source Mesh} & \textbf{Volume} & \textbf{Cells} & \textbf{DoFs ($P_1$)} \\
        \hline
        $\mathcal{T}_h^1$ & $\approx \SI{8.7e-6}{\cubic\meter}$ & \num{486904} & \num{83208} \\
        $\mathcal{T}_h^2$ & $\approx \SI{3.7e-5}{\cubic\meter}$ & \num{442502} & \num{74668} \\
        \hline
    \end{tabular}
    \caption{Geometric characteristics of the finest-level source meshes, $\mathcal{T}_h^1$ and $\mathcal{T}_h^2$, used to discretize the domains $\Omega_1$ and $\Omega_2$. The number of Degrees of Freedom (DoFs) corresponds to a $P_1$ finite element discretization.}
    \label{tab:source_meshes}
\end{table}

\begin{table}[htbp]
    \centering
    \begin{tabular}{c r r c r r}
        \toprule
        Level & \multicolumn{2}{c}{Source Mesh $\mathcal{T}_h^1$} & \phantom{abc} & \multicolumn{2}{c}{Source Mesh $\mathcal{T}_h^2$} \\
        \cmidrule(lr){2-3} \cmidrule(lr){5-6}
        Index ($l$) & Cells & DoFs ($P_1$) && Cells & DoFs ($P_1$) \\
        \midrule
        3 (Coarsest) & \num{915} & \num{347} && \num{1276} & \num{392} \\
        2 & \num{6932} & \num{1803} && \num{8272} & \num{2068} \\
        1 & \num{46499} & \num{10049} && \num{57107} & \num{11422} \\
        0 (Finest) & \num{486904} & \num{83208} && \num{442502} & \num{74668} \\
        \bottomrule
    \end{tabular}
    \caption{Details of the 4-level mesh hierarchies for the two source geometries, $\mathcal{T}_h^1$ and $\mathcal{T}_h^2$.}
    \label{tab:barycenter_hierarchies}
\end{table}

\begin{figure}[htbp]
    \centering
    \begin{subfigure}[b]{0.48\textwidth}
        \centering
        \includegraphics[height=4cm,trim={10cm 1cm 10cm 1cm},clip]{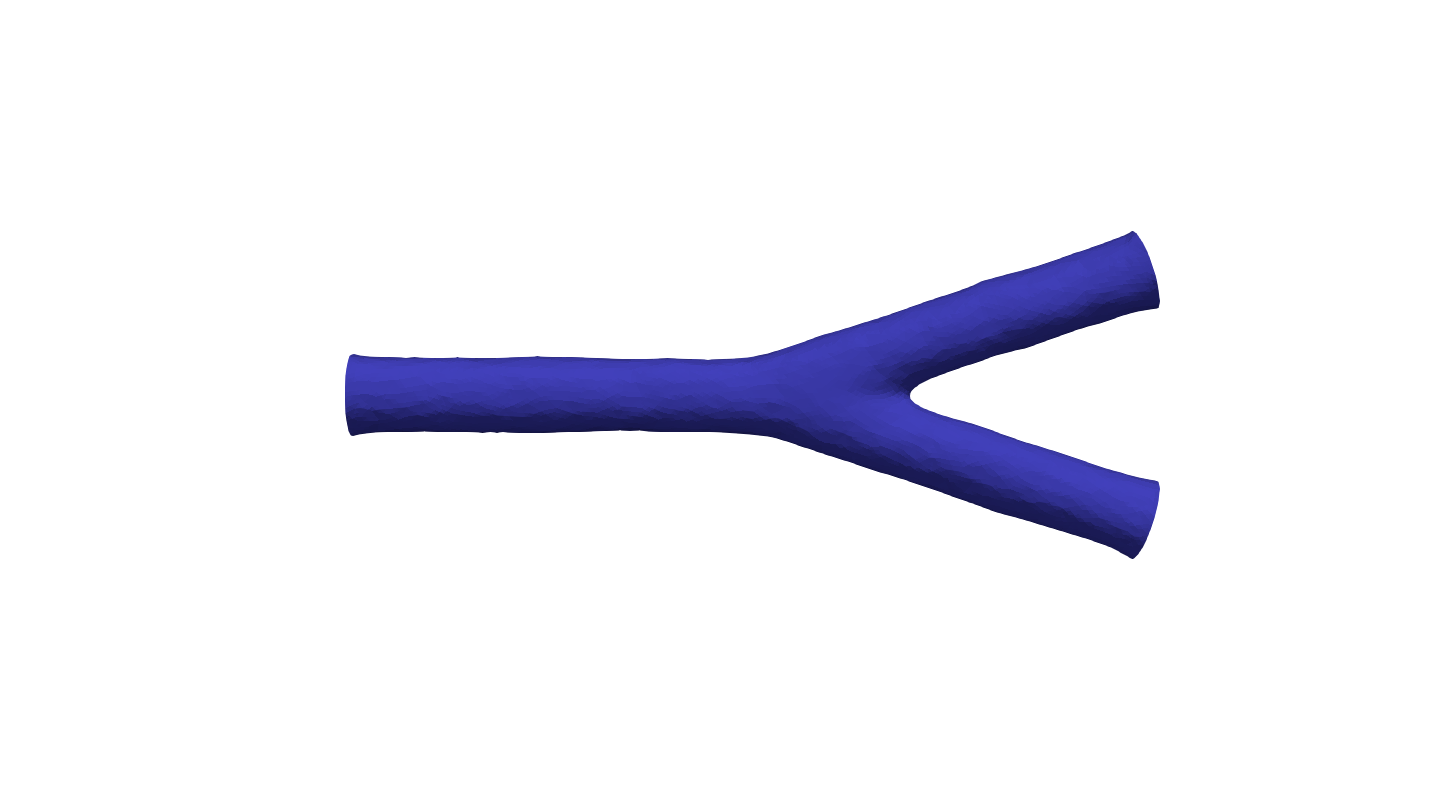}
        \caption{Source $\mu_1$: 2D projection ($xy$ plane)}
    \end{subfigure}
    \hfill
    \begin{subfigure}[b]{0.48\textwidth}
        \centering
        \includegraphics[height=4cm,trim={15cm 1cm 10cm 5cm},clip]{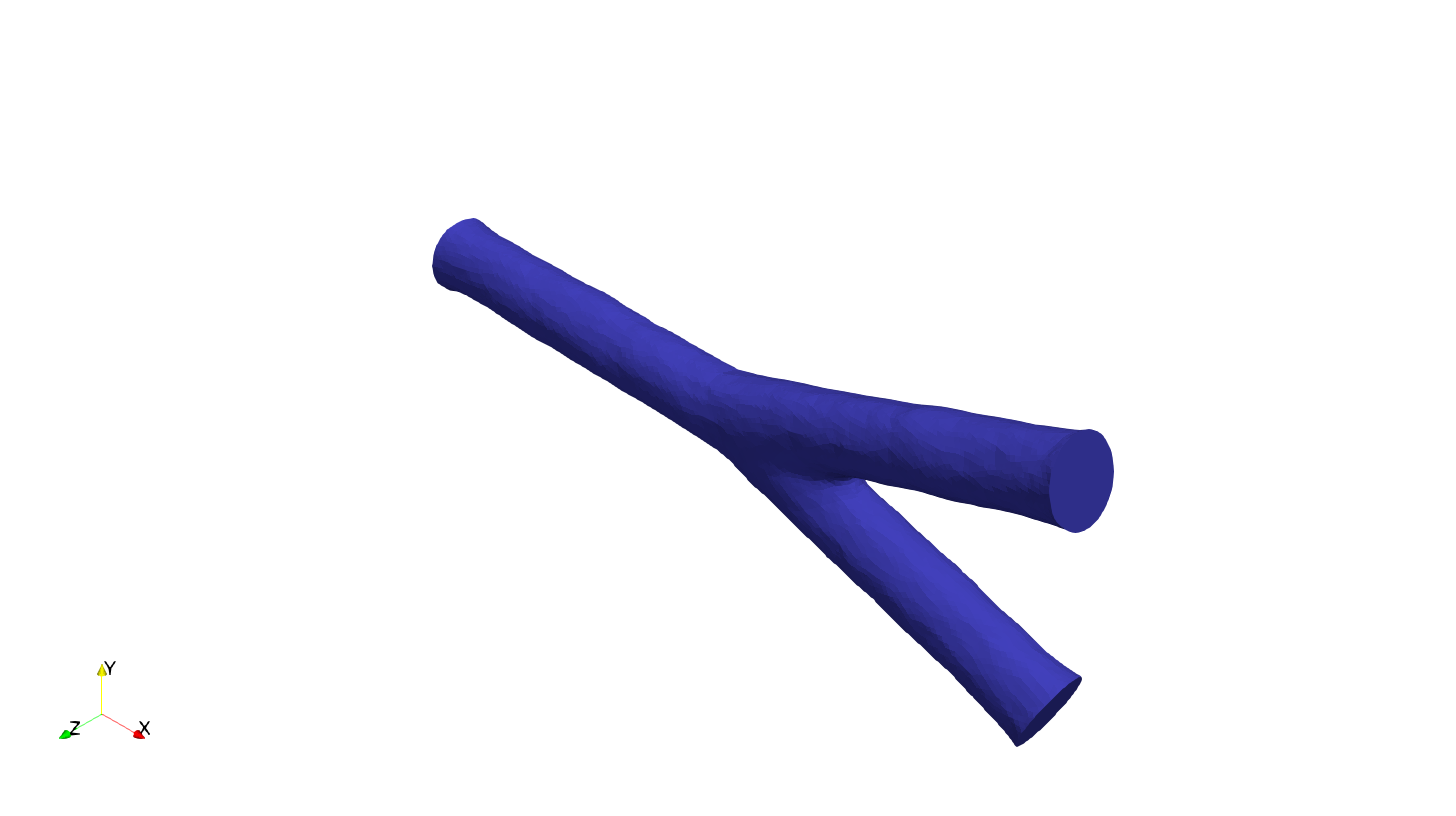}
        \caption{Source $\mu_1$: 3D visualization}
    \end{subfigure}
    \vspace{0.5cm}
    \begin{subfigure}[b]{0.48\textwidth}
        \centering
        \includegraphics[height=4cm,trim={10cm 1cm 10cm 1cm},clip]{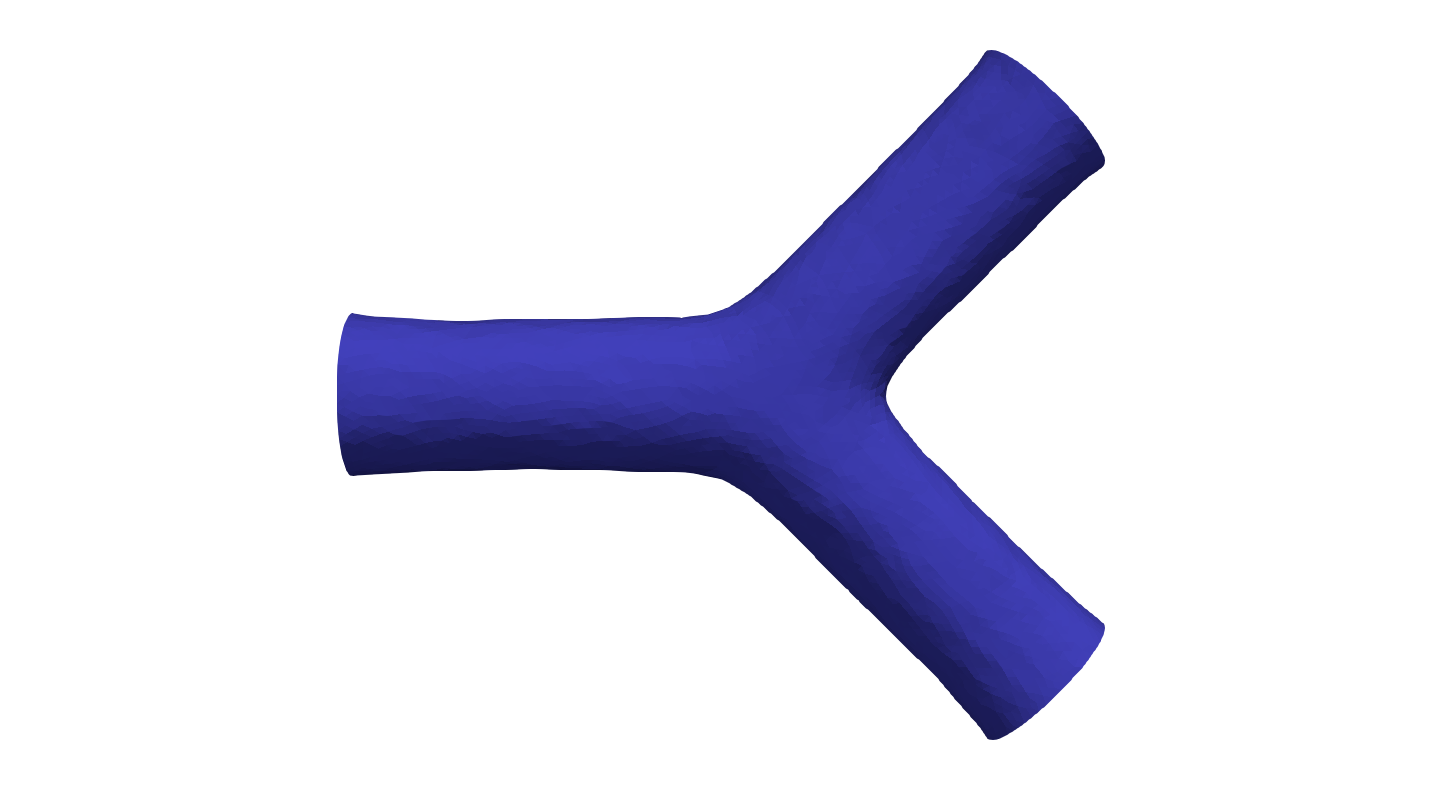}
        \caption{Source $\mu_2$: 2D projection ($xy$ plane)}
    \end{subfigure}
    \hfill
    \begin{subfigure}[b]{0.48\textwidth}
        \centering
        \includegraphics[height=4cm,trim={15cm 1cm 10cm 5cm},clip]{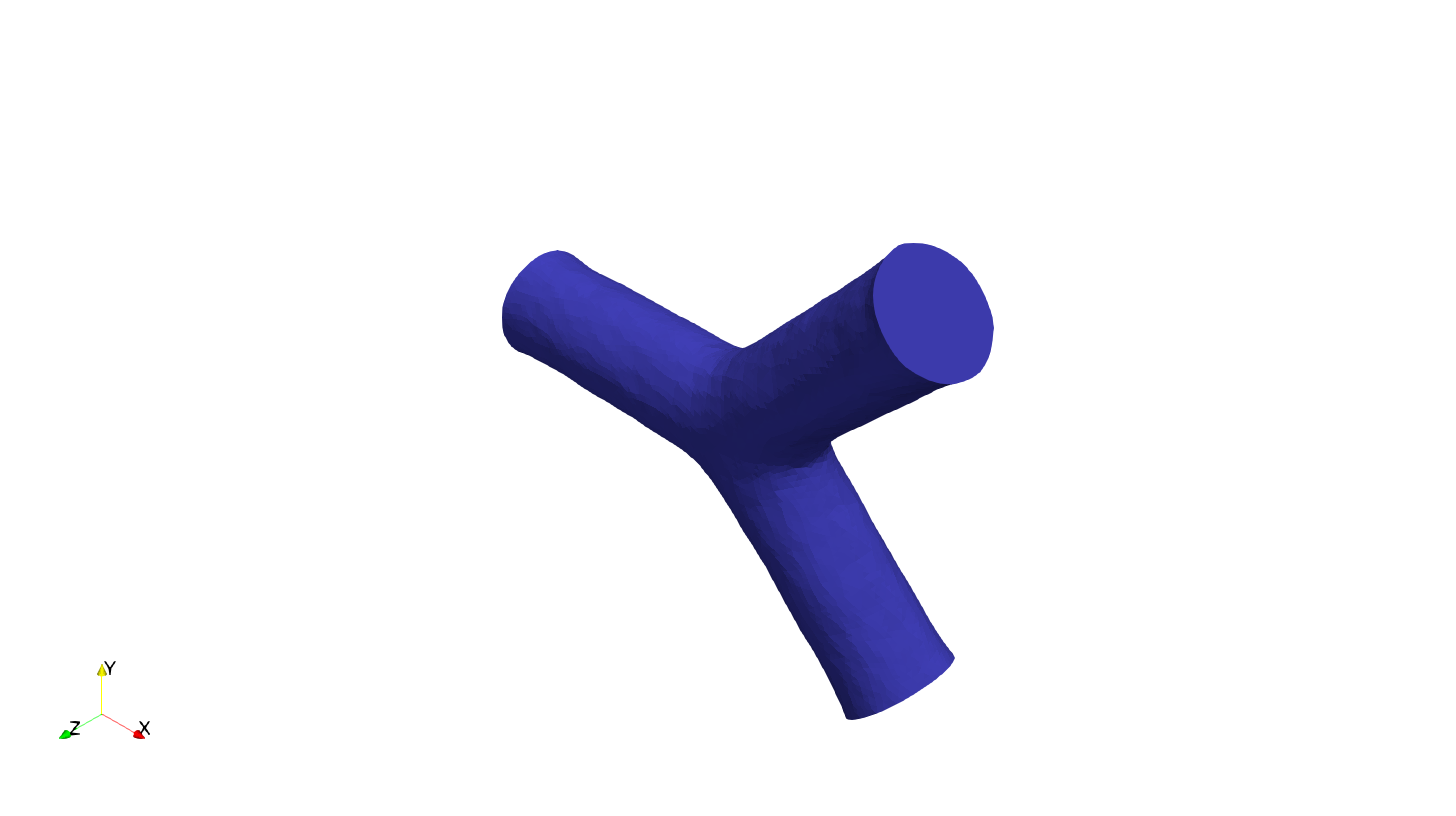}
        \caption{Source $\mu_2$: 3D visualization}
    \end{subfigure}
    \caption{Visualization of the two vascular junction geometries, corresponding to measures $\mu_1$ and $\mu_2$, used for the barycenter computation.}
    \label{fig:source_geometries}
\end{figure}

Our computational strategy is a two-stage hierarchical process designed for both efficiency and accuracy. We first perform a coarse initialization, which begins by sampling the $N_b$ barycenter support points from the domain $\Omega_1$. A barycenter is then computed using these points and only the coarsest mesh representations from each source hierarchy (level 3 in Table~\ref{tab:barycenter_hierarchies}). This stage, run to convergence using the damped Lloyd iteration (Eq.~\eqref{eq:damped_lloyd} with $\theta = 0.5$), rapidly determines a robust initial guess for the barycenter's overall geometric structure at low cost. This coarse solution then serves as a warm start for the second stage: high-resolution refinement. In each step of this main Lloyd's iteration, we must solve the two RSOT problems, $\mu_1 \to \nu^{(t)}$ and $\mu_2 \to \nu^{(t)}$, for the current barycenter estimate $\nu^{(t)}$.

For these inner solves, we employ the \emph{source-side multilevel strategy} (Section~\ref{sec:source_coarsening}). While the benchmark analysis in Section~\ref{subsubsec:multilevel_performance} showed that target-side or combined multilevel methods can offer superior performance, they are ill-suited for this barycenter problem.  The reason is that the barycenter support points $\{y_j\}_{j=1}^{N_b}$ are optimization variables that move in each Lloyd's step, a target-side approach would require continuous, costly re-clustering and hierarchy rebuilding. More importantly, our chosen approach confers a decisive advantage: we use the optimal dual potential vector $\bpsi^{(t)}$ from the RSOT solve at iteration $t$ as a warm start for the solve at iteration $t+1$, which substantially reduces inner solver iterations. This warm-start mechanism is invalidated by a changing target hierarchy, rendering the source-side multilevel strategy the most logical and efficient choice for this problem.

Numerical parameters for this experiment were set as follows: regularization $\varepsilon = \SI{5e-6}{}$, a value corresponding to approximately 1\% of the characteristic cost derived from the smaller source volume ($V_{\text{min}}^{2/d}$ for $d=3$); geometric truncation with relative error tolerance $\tau = \SI{1e-5}{}$; and an L-BFGS tolerance of $\delta_{\text{tol}} = \SI{1e-5}{}$ for the inner RSOT solves. The outer Lloyd's iteration was terminated when the Root Mean Square (RMS) Point Movement between iterations fell below $\SI{1e-5}{}$:
\begin{equation}
\label{eq:rms_movement}
\text{RMS}(t) = \sqrt{\frac{1}{N_b} \sum_{j=1}^{N_b} \|y_j^{(t+1)} - y_j^{(t)}\|^2} \le \SI{1e-5}{}.
\end{equation}

\begin{figure}[htbp]
    \centering
    \includegraphics[width=0.9\textwidth]{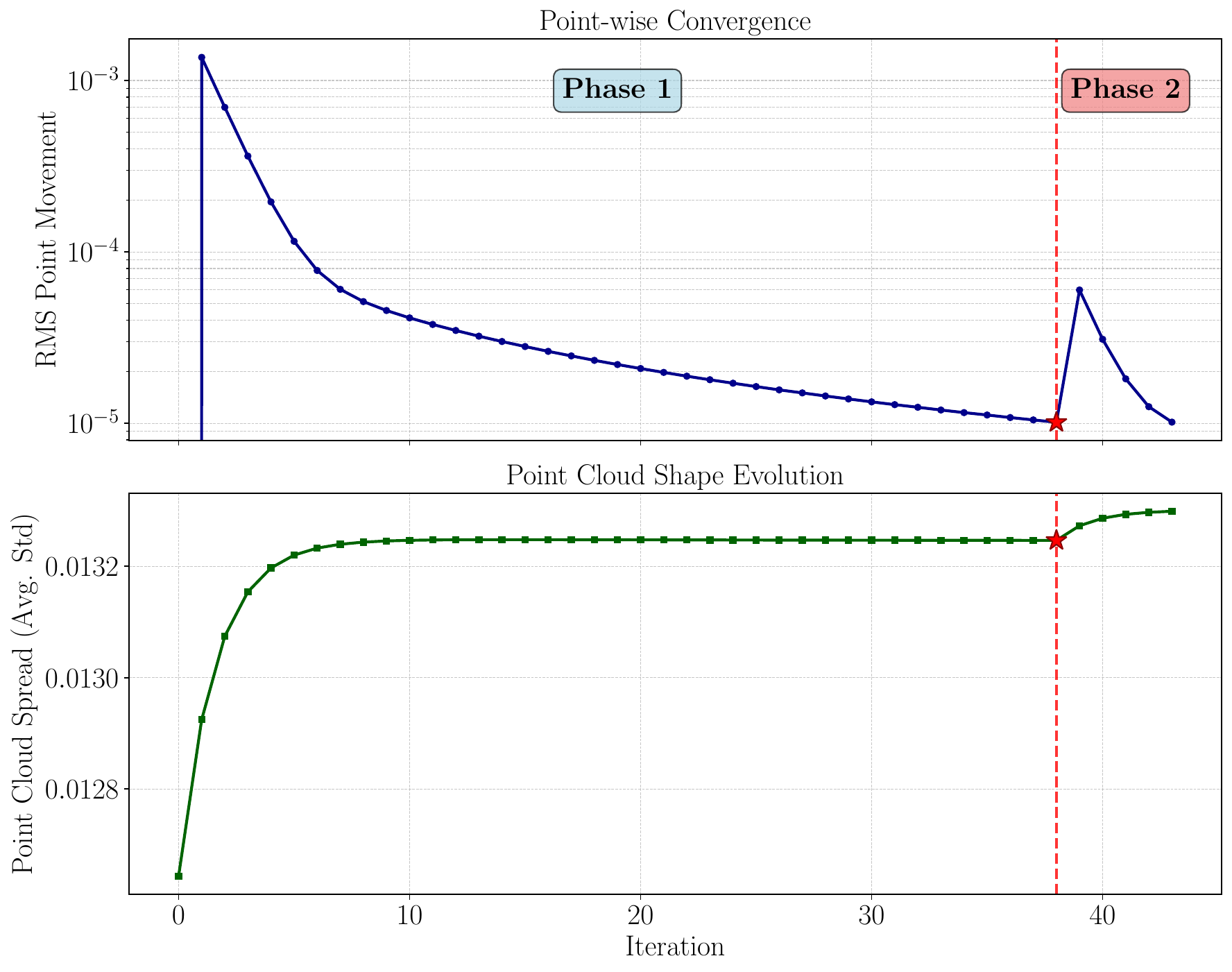}
    \caption{Convergence analysis of the two-stage barycenter computation. The top panel plots the RMS Point Movement (Eq.~\eqref{eq:rms_movement}) on a log scale, showing pointwise convergence. The bottom panel tracks the Point Cloud Spread (average standard deviation of coordinates), indicating the evolution of the barycenter's overall shape. The red dashed line and star marker indicate the transition from the coarse initialization phase (Phase 1) to the high-resolution refinement phase (Phase 2).}
    \label{fig:barycenter_convergence}
\end{figure}

The convergence of this two-stage process is detailed in Figure~\ref{fig:barycenter_convergence}. The top panel shows that in Phase 1, the RMS point movement starts high and decreases rapidly as the algorithm quickly finds the coarse geometric structure of the barycenter. The spike at the transition to Phase 2 is expected, as the algorithm begins its first high-resolution update from the coarse solution. In Phase 2, the RMS movement continues to decrease smoothly, indicating stable refinement of the point positions. The bottom panel shows that the overall shape of the point cloud, measured by its spatial spread, stabilizes very quickly during the coarse phase and remains consistent throughout the fine-tuning phase.

\begin{figure}[htbp]
    \centering
    \includegraphics[width=\textwidth]{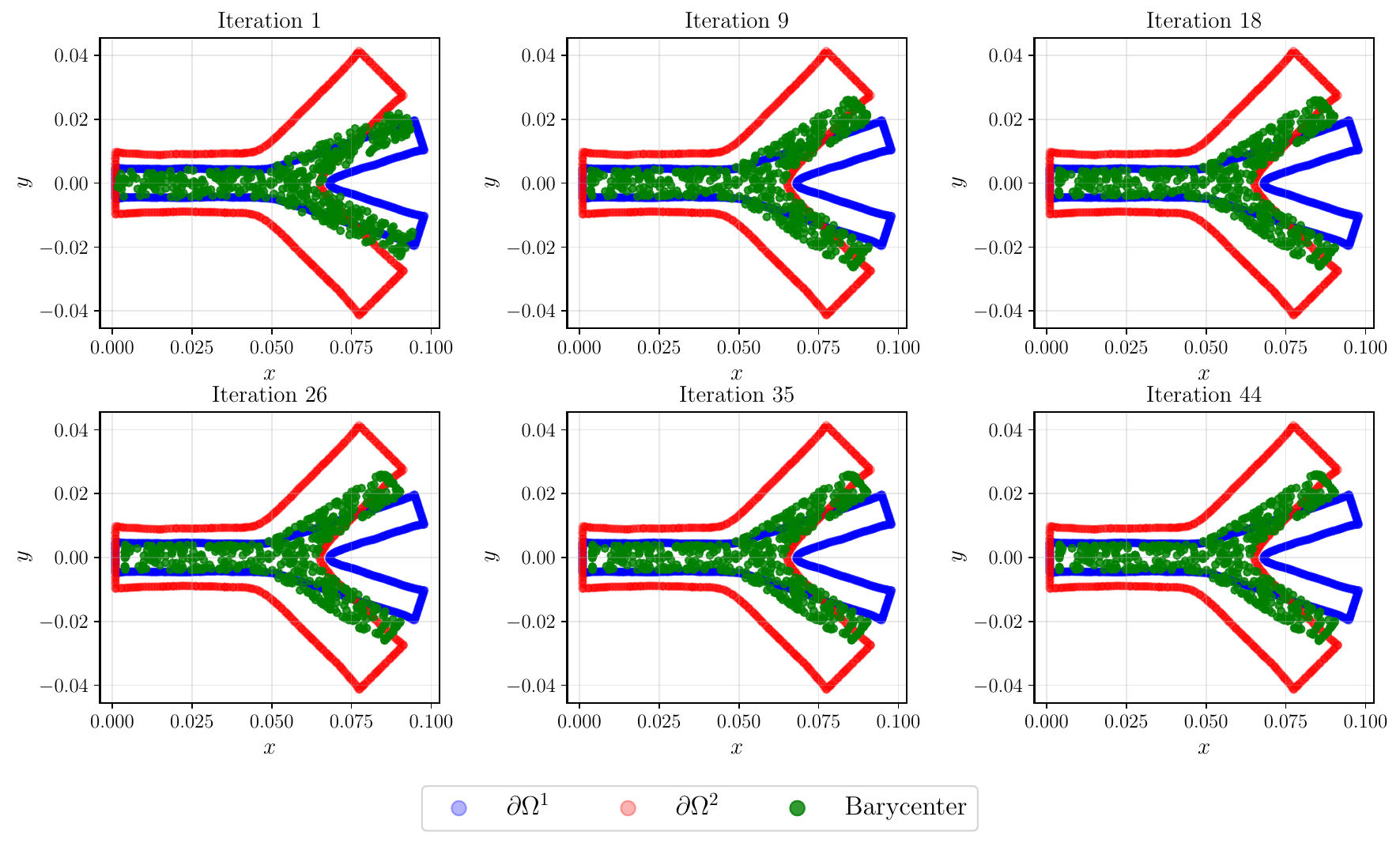}
    \caption{Evolution of the barycenter point cloud at selected iterations, projected onto the $xy$ plane (showing 500 randomly sampled points for visualization clarity from the full \num{10000} point cloud). The plots show the initial state followed by progressive refinement. The final iteration (44) shows the converged high-resolution barycenter, which smoothly interpolates the geometry of the two source measures ($\mu_1$ and $\mu_2$).}
    \label{fig:barycenter_snapshots}
\end{figure}

Figure~\ref{fig:barycenter_snapshots} provides a visual representation of the barycenter's evolution. The snapshots illustrate how the point cloud rapidly organizes from its initial configuration. The subsequent iterations refine the density and position of the points to form a smooth and well-defined geometric mean of the two sources. The final state (Iteration 44) represents the converged high-resolution barycenter, successfully capturing the interpolated geometry between the source structures (supports of $\mu_1$ and $\mu_2$) shown in Figure~\ref{fig:source_geometries}.

This experiment highlights a powerful application of our framework. By nesting the efficient, multilevel RSOT solver within a Lloyd's iteration, we can tackle computationally demanding problems like Wasserstein barycenter computation for complex 3D geometries. The two-stage strategy, combining a cheap coarse initialization with an accelerated high-resolution refinement, proves to be a highly effective and robust approach. %
\subsection{Application: Shape Registration}
\label{subsec:registration_application}

Beyond barycenter computation, we demonstrate the utility of our RSOT framework for the registration of complex 3D geometries, a fundamental task in medical imaging and computational anatomy~\cite{romor2025dataassimilationperformedrobust}. We consider the problem of finding a smooth, physically meaningful mapping between the two vascular junction geometries introduced in Section \ref{subsec:barycenter_application}. Specifically, we transport the measure $\mu_1$, defined on the domain $\Omega_1$, to a discrete measure $\nu_2$, supported on the vertices of the mesh discretizing domain $\Omega_2$. This experiment showcases the detailed analysis possible with the computed transport plan and highlights the benefits of the multilevel and acceleration strategies in a practical, large-scale setting.

\subsubsection{Problem Setup}

The registration problem is formulated using uniform measures to drive the geometric mapping. The source measure $\mu_1$ is defined by a constant density over the domain $\Omega_1$ (discretized by the mesh $\mathcal{T}_h^1$), and the target measure $\nu_2$ is a discrete measure supported on the $N_2 = \num{74668}$ vertices of the mesh $\mathcal{T}_h^2$ (discretizing $\Omega_2$), with uniform weights. To handle the geometric complexity and the large number of points, we employ the combined multilevel approach (Section \ref{subsec:combined_multilevel}). The source mesh hierarchy for $\mathcal{T}_h^1$ is identical to that used in the barycenter experiment (Table \ref{tab:barycenter_hierarchies}), while the target measure hierarchy for the support points of $\nu_2$ is constructed via k-means clustering. The details of both hierarchies are provided in Table \ref{tab:registration_hierarchies}.

The RSOT problem was solved with a regularization parameter of $\varepsilon = \SI{5e-6}{}$ and a geometric truncation tolerance of $\tau = \SI{1e-5}{}$ for the accelerated dual evaluation.

\begin{table}[htbp]
    \centering
    \caption{Hierarchies used for the vascular registration problem.}
    \label{tab:registration_hierarchies}
    \begin{tabular}{lrrrr}
        \toprule
        \textbf{Hierarchy Level} & \multicolumn{2}{c}{\textbf{Source Mesh ($\mathcal{T}_h^1$)}} & \multicolumn{2}{c}{\textbf{Target Points (from $\mathcal{T}_h^2$)}} \\
        \cmidrule(lr){2-3} \cmidrule(lr){4-5}
         & Vertices & Elements & \multicolumn{2}{r}{Points ($N_l$)} \\
        \midrule
        3 (Coarsest) & 347 & \num{1535} & \multicolumn{2}{r}{\num{2500}} \\
        2 & \num{1803} & \num{9428} & \multicolumn{2}{r}{\num{10000}} \\
        1 & \num{10049} & \num{56495} & \multicolumn{2}{r}{\num{40000}} \\
        0 (Finest) & \num{83208} & \num{486904} & \multicolumn{2}{r}{\num{74668}} \\
        \bottomrule
    \end{tabular}
\end{table}

\subsubsection{Analysis of the Transport Plan: Conditional Densities}

The optimal transport plan $\pi_\varepsilon^*$ provides rich information about the correspondence between the source and target geometries. As discussed in Remark \ref{rem:conditional_expectations}, the plan can be interpreted as the joint law of a pair of random variables $(X,Y)$ on $\Omega_1 \times \Y_2$. A key quantity for analysis is the conditional probability density of the source location $X$ given that mass is transported to a specific target point $Y=y_j$\footnote{For a detailed tutorial on conditional densities in the context of semi-discrete optimal transport, see \url{https://github.com/SemiDiscreteOT/SemiDiscreteOT/tree/master/examples/tutorial_2}.}. This density, which we denote $\mu_j(x)$, is obtained via Bayes' theorem:
\begin{equation}
\label{eq:conditional_density}
\mu_j(x) \defeq \frac{\dd\pi_j(x)}{\int_{\Omega_1} \dd\pi_j(x')} = \frac{p_j(x|\bpsi^*) \dd\mu_1(x)}{\nu_{2,j}},
\end{equation}
where $p_j(x|\bpsi^*)$ is the unnormalized plan density from Eq.~\eqref{eq:final_plan_density} and the equality follows from the target marginal constraint $\int_{\Omega_1} p_j(x|\bpsi^*) \dd\mu_1(x) = \nu_{2,j}$. By construction, $\mu_j(x)$ is a probability measure on $\Omega_1$ for each target point $j$.

Figure \ref{fig:reg_cond_density} visualizes this conditional density $\mu_j(x)$ for four distinct points on the target geometry: one on the axis before the bifurcation, one at the bifurcation, and one in each of the two outflow branches. Each subplot shows the source geometry colored by the density of mass that is transported to the indicated target point.

\begin{figure}[htbp]
    \centering
    \begin{subfigure}[b]{0.49\textwidth}
        \centering
        \includegraphics[width=\textwidth,trim={12cm 1cm 12cm 1cm},clip]{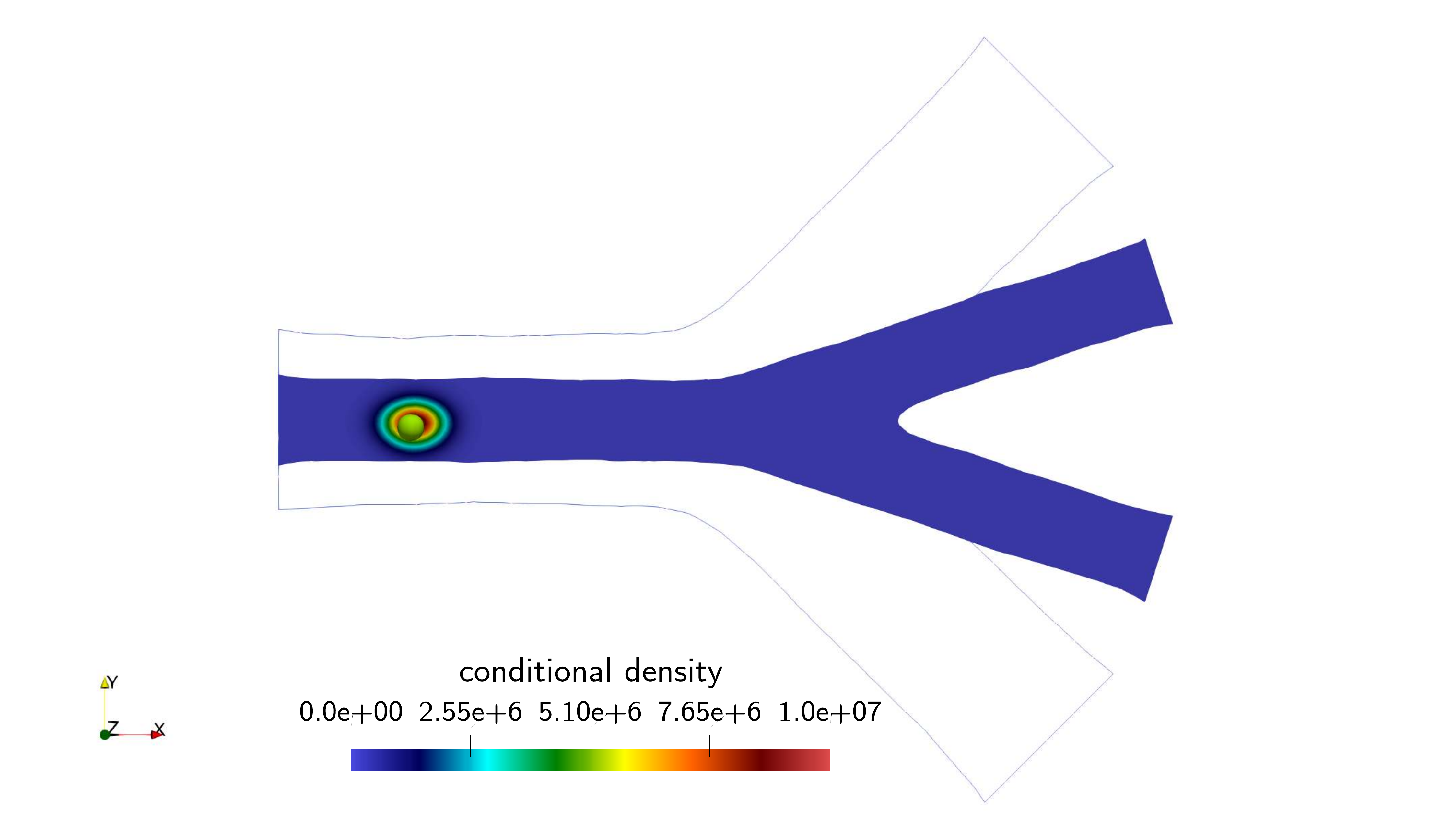}
        \caption{Target point on axis (before junction)}
    \end{subfigure}
    \hfill
    \begin{subfigure}[b]{0.49\textwidth}
        \centering
        \includegraphics[width=\textwidth,trim={12cm 1cm 12cm 1cm},clip]{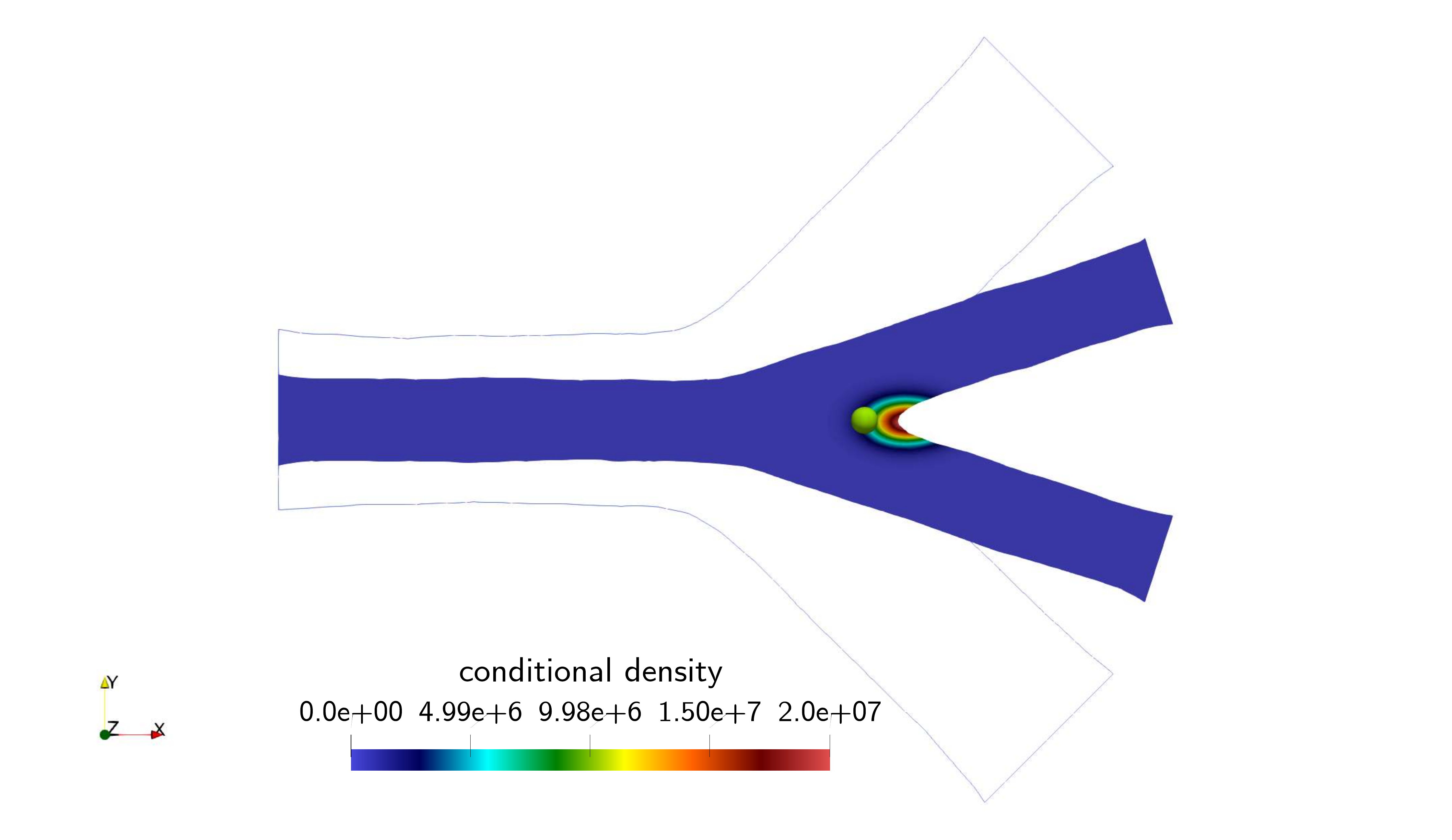}
        \caption{Target point at junction}
    \end{subfigure}
    \vspace{0.2cm}
    \begin{subfigure}[b]{0.49\textwidth}
        \centering
        \includegraphics[width=\textwidth,trim={12cm 1cm 12cm 1cm},clip]{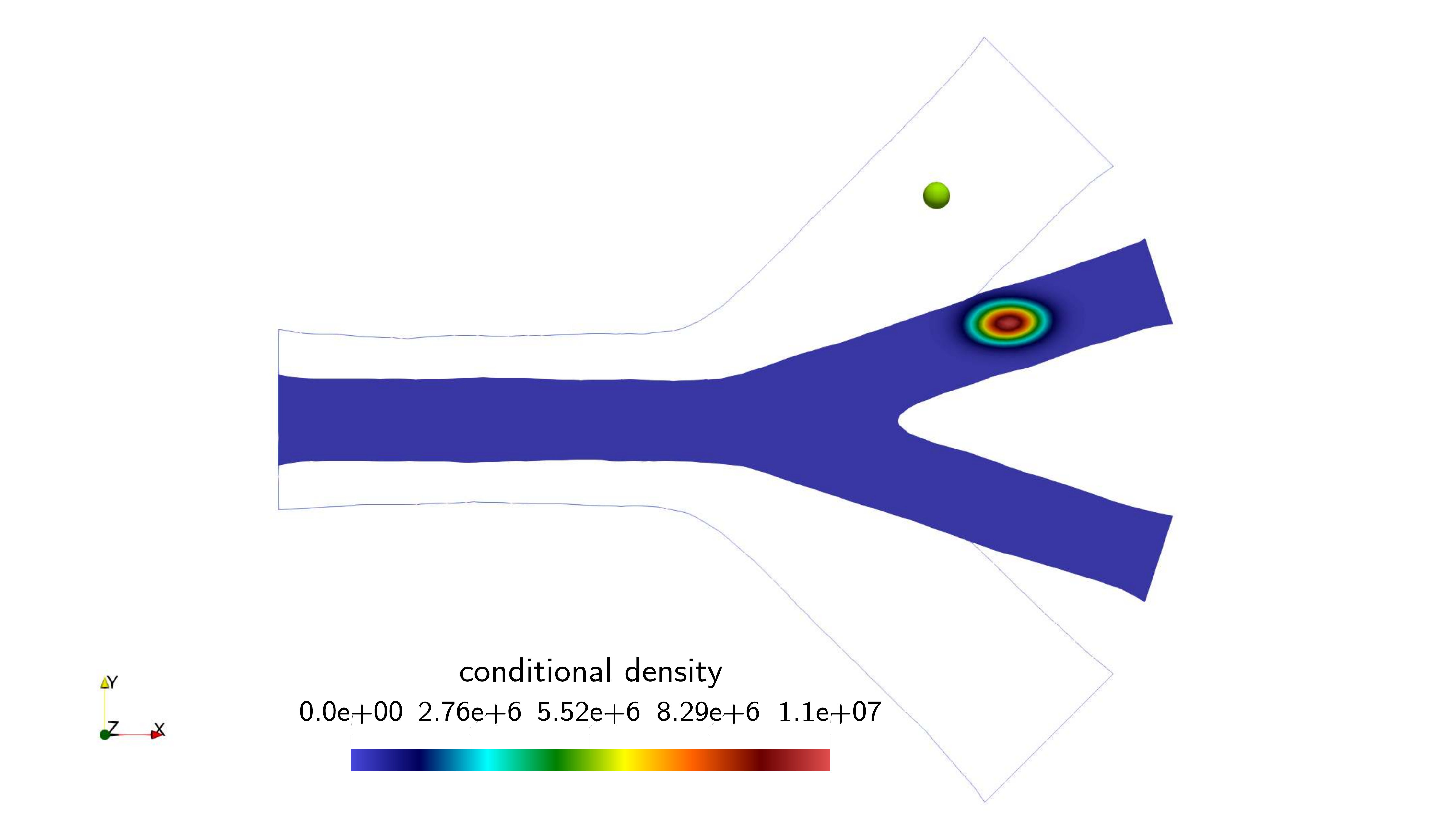}
        \caption{Target point on upper branch}
    \end{subfigure}
    \hfill
    \begin{subfigure}[b]{0.49\textwidth}
        \centering
        \includegraphics[width=\textwidth,trim={12cm 1cm 12cm 1cm},clip]{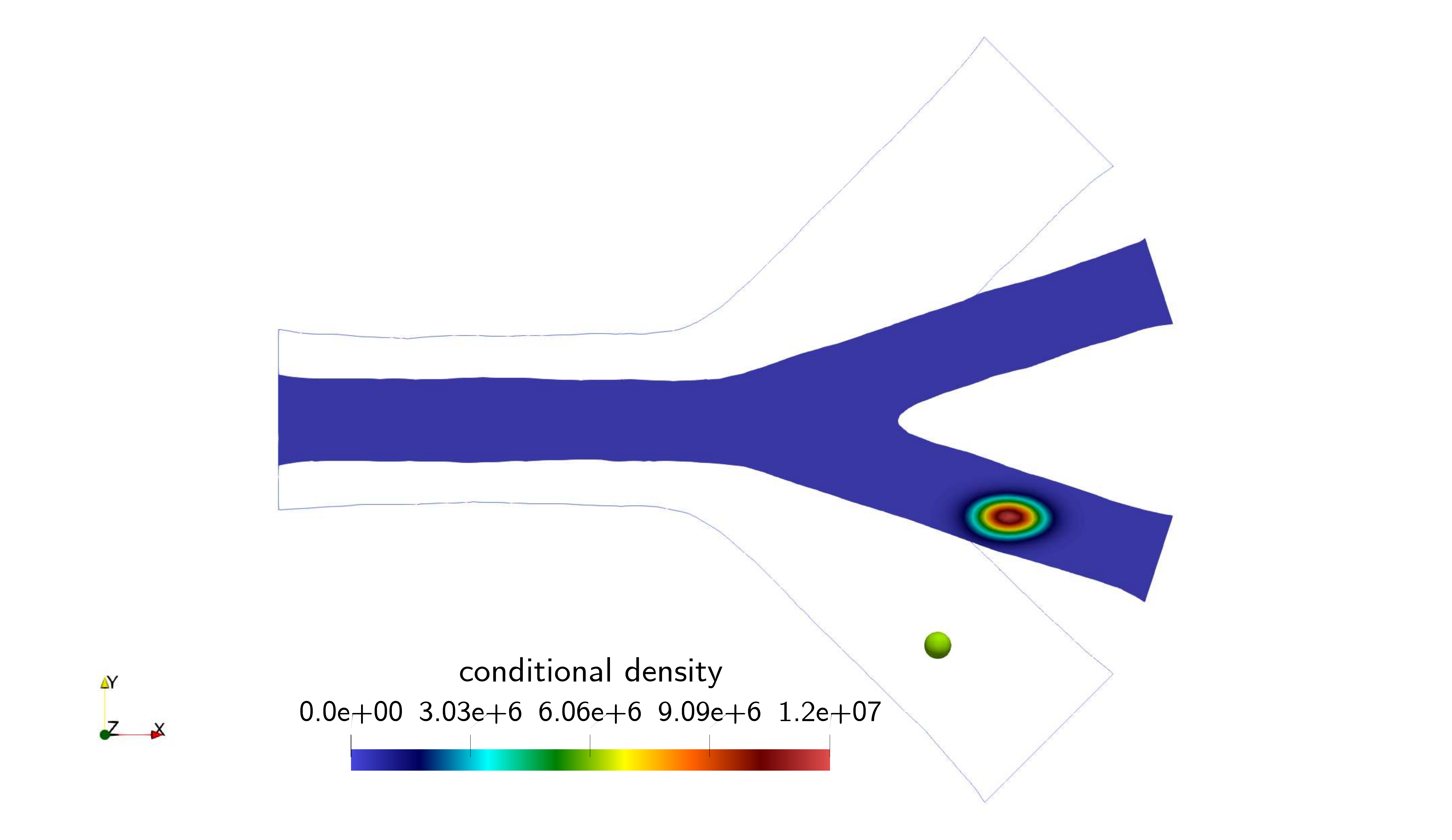}
        \caption{Target point on lower branch}
    \end{subfigure}
    \caption{Conditional density of the source measure for four selected target points. The color map (blue to red) indicates low to high density, showing which parts of the source geometry contribute mass to each specific target location (represented by the green dots). The plots demonstrate that the transport plan correctly identifies homologous regions.}
    \label{fig:reg_cond_density}
\end{figure}

The results clearly demonstrate that the transport plan captures the geometric and topological correspondence between the two structures. Mass arriving at the target's axis (a) originates almost exclusively from the source's axis. Similarly, the target's junction (b) is fed by the source's junction, and the upper (c) and lower (d) branches are fed by their respective counterparts in the source geometry. The concentrated, high-density (red) regions confirm that for a small $\varepsilon$, the transport plan is highly localized and not overly diffuse, correctly mapping homologous anatomical features.

\subsubsection{Transport Map Recovery and Displacement Analysis}

Once the optimization process converges to the optimal potentials $\bpsi^*$, the resulting plan density $p_j(x|\bpsi^*)$ from Eq.~\eqref{eq:final_plan_density} describes a probabilistic mapping from each source point $x$ to the set of target points $\Y_2$. To obtain a deterministic transport map $T: \Omega_1 \to \R^3$, we must summarize this conditional distribution for each $x$. We consider two canonical choices based on statistical estimators: the conditional expectation (mean) and the conditional mode (maximum likelihood).

The first map is the \emph{barycentric map}, $T_{\text{bary}}(x)$, which computes the expected target location for a source point $x$. This is analogous to finding the center of mass of the target points, weighted by their conditional probabilities:
\begin{equation}
\label{eq:barycentric_map_registration}
T_{\text{bary}}(x) \defeq \mathbb{E}[Y | X=x] = \sum_{j=1}^{N_2} p_j(x | \bpsi^*) \, y_{2,j} = \frac{\sum_{j=1}^{N_2} \nu_{2,j} \exp\left( (\psi^*_j - c(x, y_{2,j})) / \varepsilon \right) y_{2,j}}{\sum_{l=1}^{N_2} \nu_{2,l} \exp\left( (\psi^*_l - c(x, y_{2,l})) / \varepsilon \right)}.
\end{equation}
Because it is a weighted average over all target points, the barycentric map produces a smooth deformation field. The resulting point $T_{\text{bary}}(x)$ is a new point in $\R^3$ that does not necessarily belong to the original set of target vertices $\Y_2$.

The second map is the \emph{modal map}, $T_{\text{modal}}(x)$, which represents a maximum a posteriori estimate. It assigns each source point $x$ to the single, most probable target vertex $y_{2,j}$. This operation corresponds to finding the mode of the conditional distribution for each source point $x$:
\begin{equation}
\label{eq:modal_map_registration}
T_{\text{modal}}(x) \defeq y_{2, j^*(x)}, \quad \text{where} \quad j^*(x) = \argmax_{j \in \{1, \dots, N_2\}} p_j(x | \bpsi^*).
\end{equation}
Maximizing the plan density $p_j(x|\bpsi^*)$ is equivalent to maximizing the term $(\psi^*_j - c(x, y_{2,j}) + \varepsilon \ln \nu_{2,j})$. This partitions the source domain $\Omega_1$ into a Power diagram, where each cell consists of points mapping to the same target vertex. Unlike the barycentric map, the modal map is piecewise constant and its image is a subset of the original target vertices $\Y_2$.

\begin{remark}[Conditional Expectations in Barycenter and Registration Problems]
\label{rem:conditional_expectations}
There is a noteworthy duality between the computation of the barycentric map in this registration context and the update step for the Wasserstein barycenter in Section \ref{subsec:barycenter_application}. Let $(X, Y)$ be a pair of random variables on $\Omega_1 \times \Y_2$ (where $\Y_2$ is the support of $\nu_2$) with joint law given by the optimal plan $\pi_\varepsilon^*$ between $\mu_1$ and $\nu_2$. The barycentric map $T_{\text{bary}}(x)$ defined in Eq. \eqref{eq:barycentric_map_registration} is precisely the conditional expectation of the target location $Y$ given the source location $X=x$:
\[
T_{\text{bary}}(x) = \mathbb{E}_{\pi_\varepsilon^*}[Y | X=x].
\]
Conversely, the conditional barycenter $T_{i,j}$ used in the Lloyd's algorithm for the Wasserstein barycenter (Eq. \eqref{eq:barycenter_fixed_point}) is the conditional expectation of the source location $X$ given that the mass is transported to the target point $Y=y_j$:
\[
T_{i,j} = \mathbb{E}_{\pi_{i,\varepsilon}^*}[X | Y=y_j] = \frac{1}{\nu_j} \int_{\Omega_i} x \, \dd\pi_{i,j,\varepsilon}^*(x).
\]
Thus, while both applications leverage the structure of the optimal plan to compute centroids, they do so by conditioning on complementary variables, reflecting the different goals of finding a map versus finding a mean geometry.
\end{remark}

Figure \ref{fig:displacement_magnitude} shows the displacement magnitude, $\|T(x) - x\|$, for both maps. The barycentric map (a) yields a smooth displacement field, while the modal map (b) shows sharper transitions corresponding to the boundaries of the Power cells. Both maps correctly identify that the largest deformations occur at the ends of the branches, where the geometric differences between the source and target are most significant.

\begin{figure}[htbp]
    \centering
    \begin{subfigure}[b]{0.49\textwidth}
        \centering
        \includegraphics[width=\textwidth]{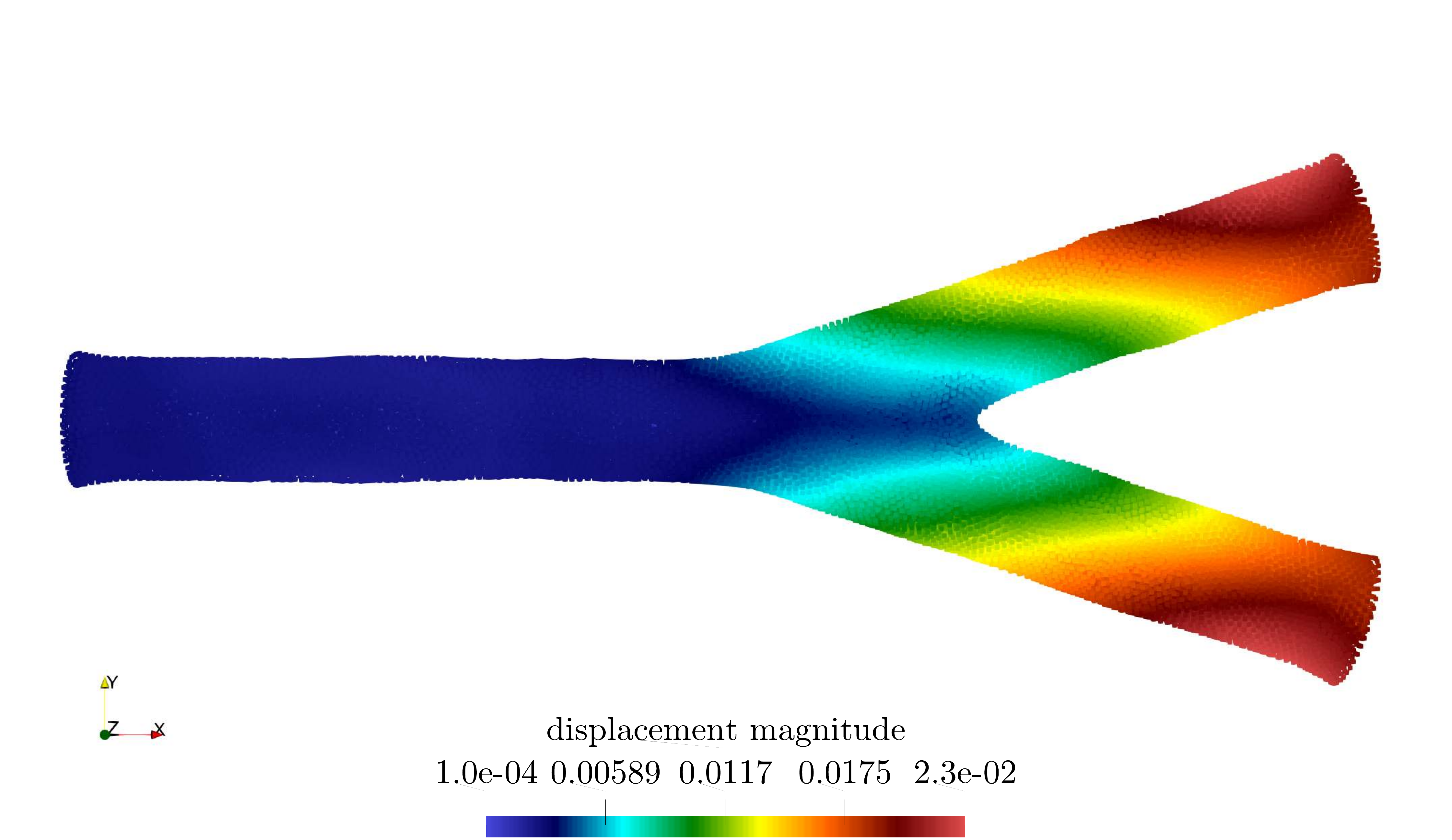}
        \caption{Displacement Magnitude (Barycentric Map)}
    \end{subfigure}
    \hfill
    \begin{subfigure}[b]{0.49\textwidth}
        \centering
        \includegraphics[width=\textwidth]{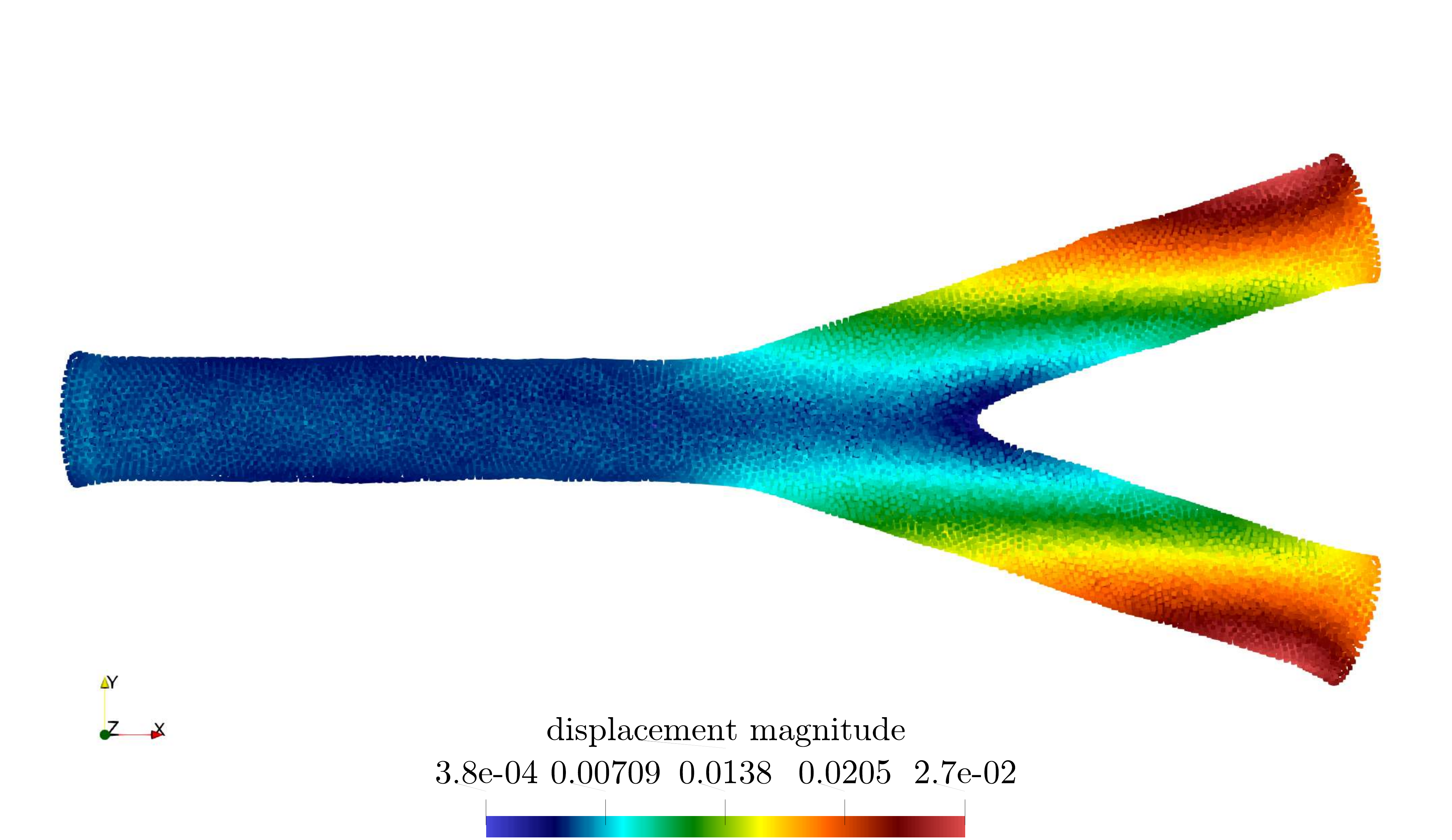}
        \caption{Displacement Magnitude (Modal Map)}
    \end{subfigure}
    \caption{Displacement magnitude $\|T(x)-x\|$ on the source geometry vertices for the barycentric and modal maps. The color scale indicates the magnitude of movement required to map the source geometry of $\Omega_1$ to the target.}
    \label{fig:displacement_magnitude}
\end{figure}

To visualize the deformation path, we perform a displacement interpolation, as proposed by McCann \cite{McCann1997}, between the original source points $x$ and their mapped positions $T_{\text{bary}}(x)$. For an interpolation parameter $\alpha \in [0,1]$, the interpolated point $x_\alpha$ is given by:
\begin{equation}
\label{eq:mccann_interpolation}
x_\alpha = (1-\alpha)x + \alpha T_{\text{bary}}(x).
\end{equation}
This concept is central to generating intermediate states along geodesic paths in the Wasserstein space and can been leveraged for data augmentation \cite{Khamlich2025}. Figure \ref{fig:transport_interpolation} shows snapshots of this process, confirming that the barycentric map provides a smooth, non-intersecting deformation from the source to the target geometry. The cloud of source points (green) progressively deforms to fill the shape defined by the target surface (red outline), providing a compelling visual validation of the registration quality.

\begin{figure}[htbp]
    \centering
    \includegraphics[width=\textwidth]{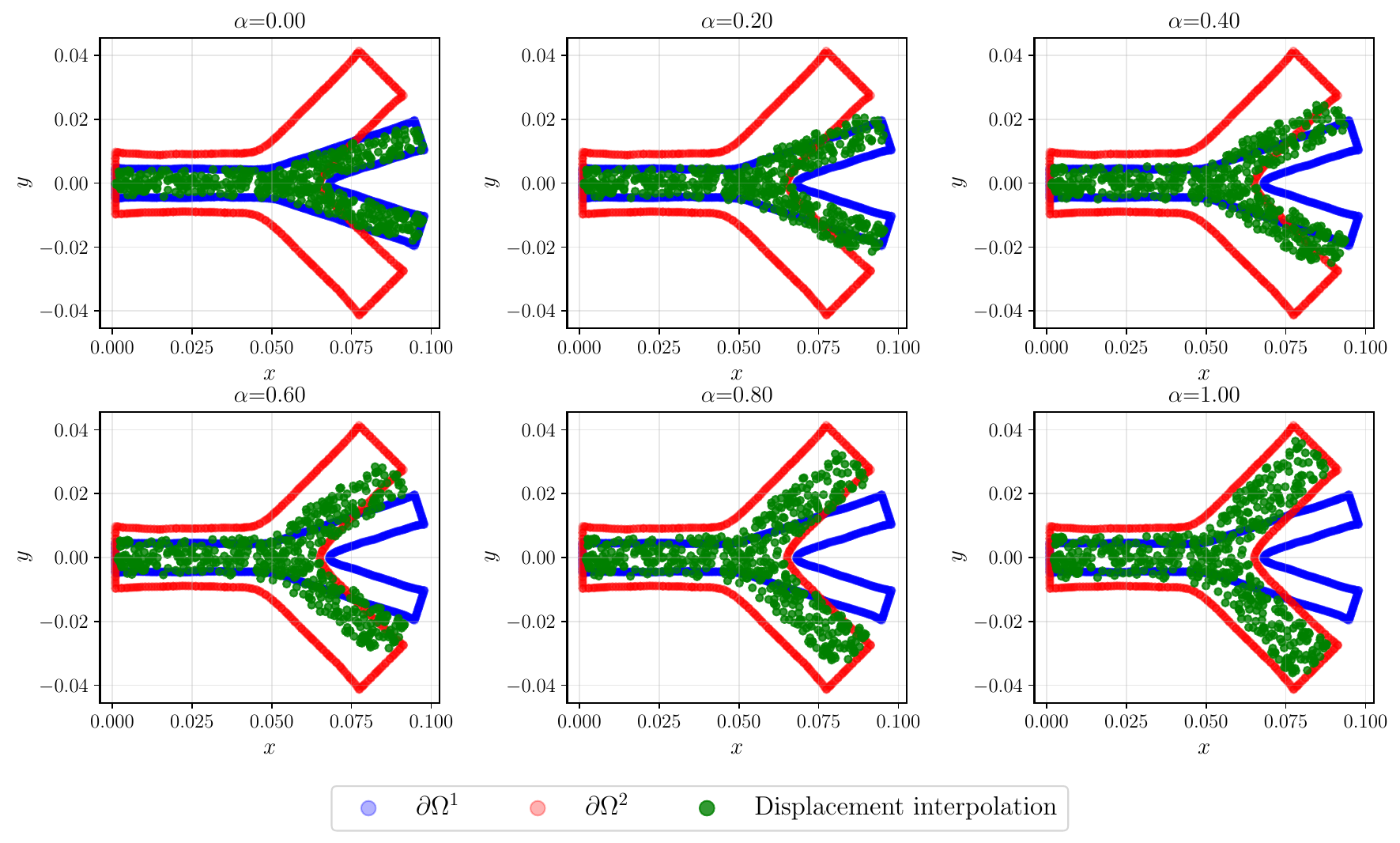}
    \caption{Displacement interpolation via $(1-\alpha)x + \alpha T_{\text{bary}}(x)$ (showing 500 randomly sampled source points for visualization clarity from the full \num{83208} vertices). The source points (green) deform smoothly from the source surface (blue outline) to the target surface (red outline) as $\alpha$ goes from 0 to 1, illustrating the smooth registration path found by the barycentric map.}
    \label{fig:transport_interpolation}
\end{figure}

\subsubsection{Application: Transporting Scalar Fields}

A powerful application of registration is the transfer of physical data from one geometry to another, employed for example in surrogate modelling~\cite{romor2025dataassimilationperformedrobust}. Using the computed map $T(x)$, which was derived from uniform measures, we can transport a scalar field defined on the source domain $\Omega_1$. Let $P_1: \Omega_1 \to \R$ be a scalar field, such as the pressure obtained from the incompressible Navier-Stokes equations. We define the transported field $P_{\text{transported}}$ on the deformed geometry $T(\Omega_1)$ by setting the value at a mapped point $y=T(x)$ to be equal to the value at the original point $x$:
\begin{equation}
\label{eq:pressure_transport}
P_{\text{transported}}(y) = P_{\text{transported}}(T(x)) \defeq P_1(x).
\end{equation}

Figure \ref{fig:field_transfer} compares the original pressure field with the fields transported via the barycentric and modal maps. The barycentric map (b) produces a smooth, continuous deformation of the pressure field onto the new geometry. Because this map averages the locations of the target vertices, the envelope of the transported points does not perfectly coincide with the boundary of the original target mesh. In contrast, the modal map (c) transports source points to a subset of the original target vertices. The transported points therefore lie exactly on the target geometry, including its boundary. This task is similar to color transfer in image processing \cite{Bonneel2023}; however, since the modal map can transport multiple source points to the same target vertex, for visualization purposes, the plot shows the source value of one of these points. An alternative approach would be to take an average of the field values that get mapped to the target point. This highlights the different nature of the two maps: the barycentric map is ideal for smooth field transfer and deformation analysis, while the modal map is more suited to segmentation or label transfer tasks. This example demonstrates the versatility of the RSOT framework in not only solving for geometric correspondence but also enabling subsequent physical analysis on the registered domains.

\begin{figure}[htbp]
    \centering
    \begin{subfigure}[b]{0.5\textwidth}
        \centering
        \includegraphics[width=\textwidth]{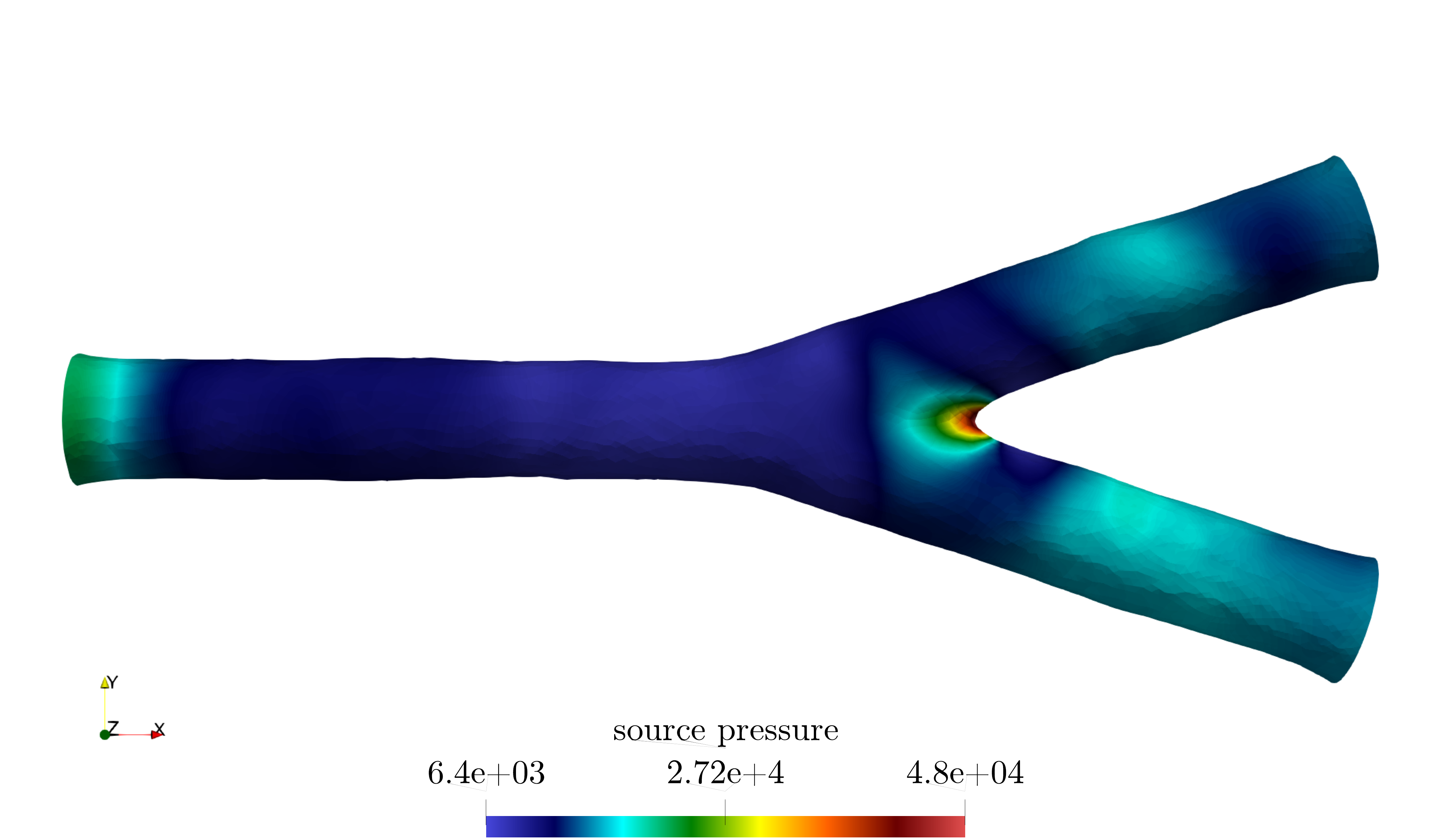}
        \caption{Original pressure field on the source domain $\Omega_1$.}
    \end{subfigure}

    \vspace{0.5cm}

    \begin{subfigure}[b]{0.49\textwidth}
        \centering
        \includegraphics[width=\textwidth]{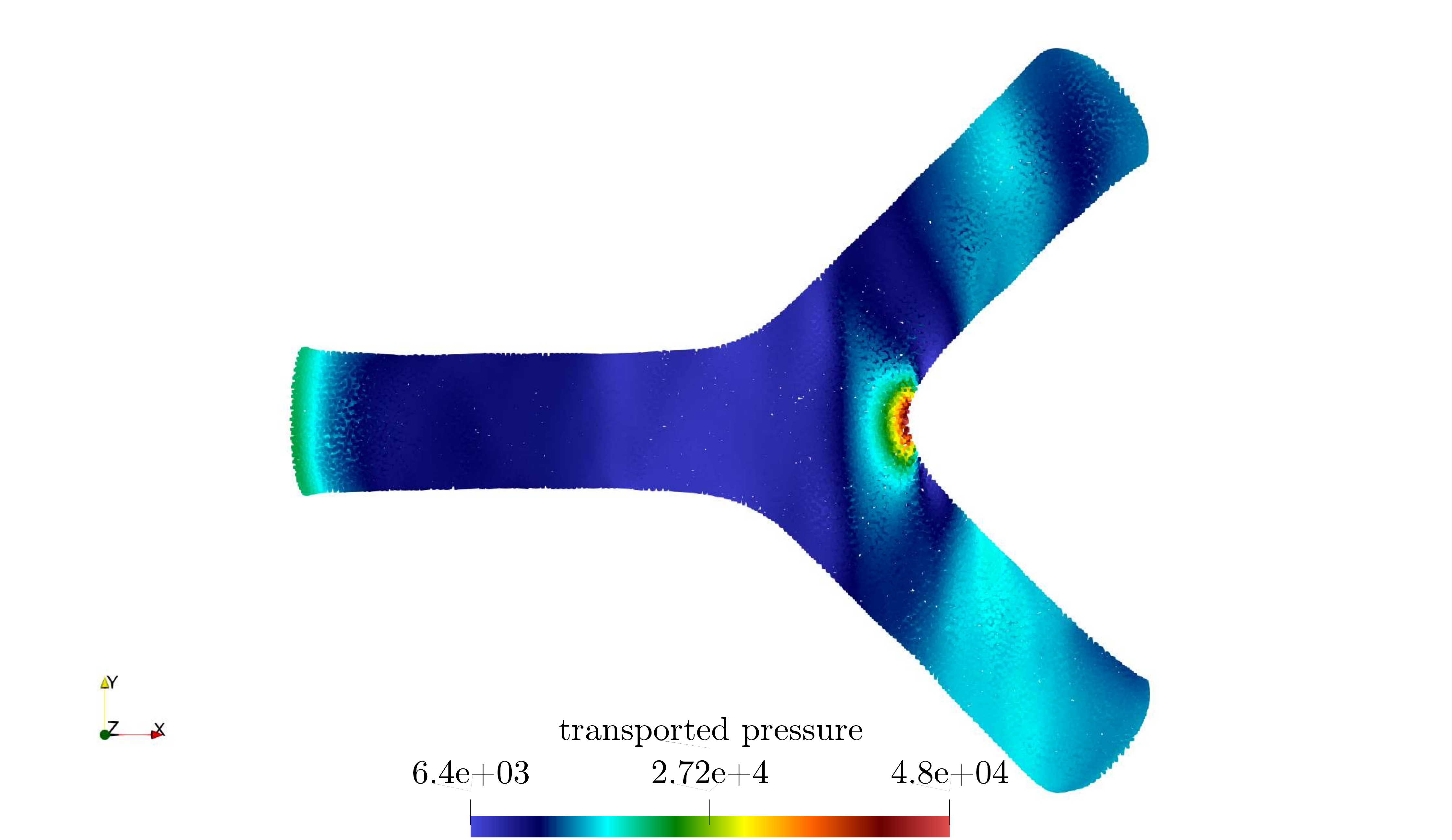}
        \caption{Transported Pressure (Barycentric)}
    \end{subfigure}
    \hfill
    \begin{subfigure}[b]{0.49\textwidth}
        \centering
        \includegraphics[width=\textwidth]{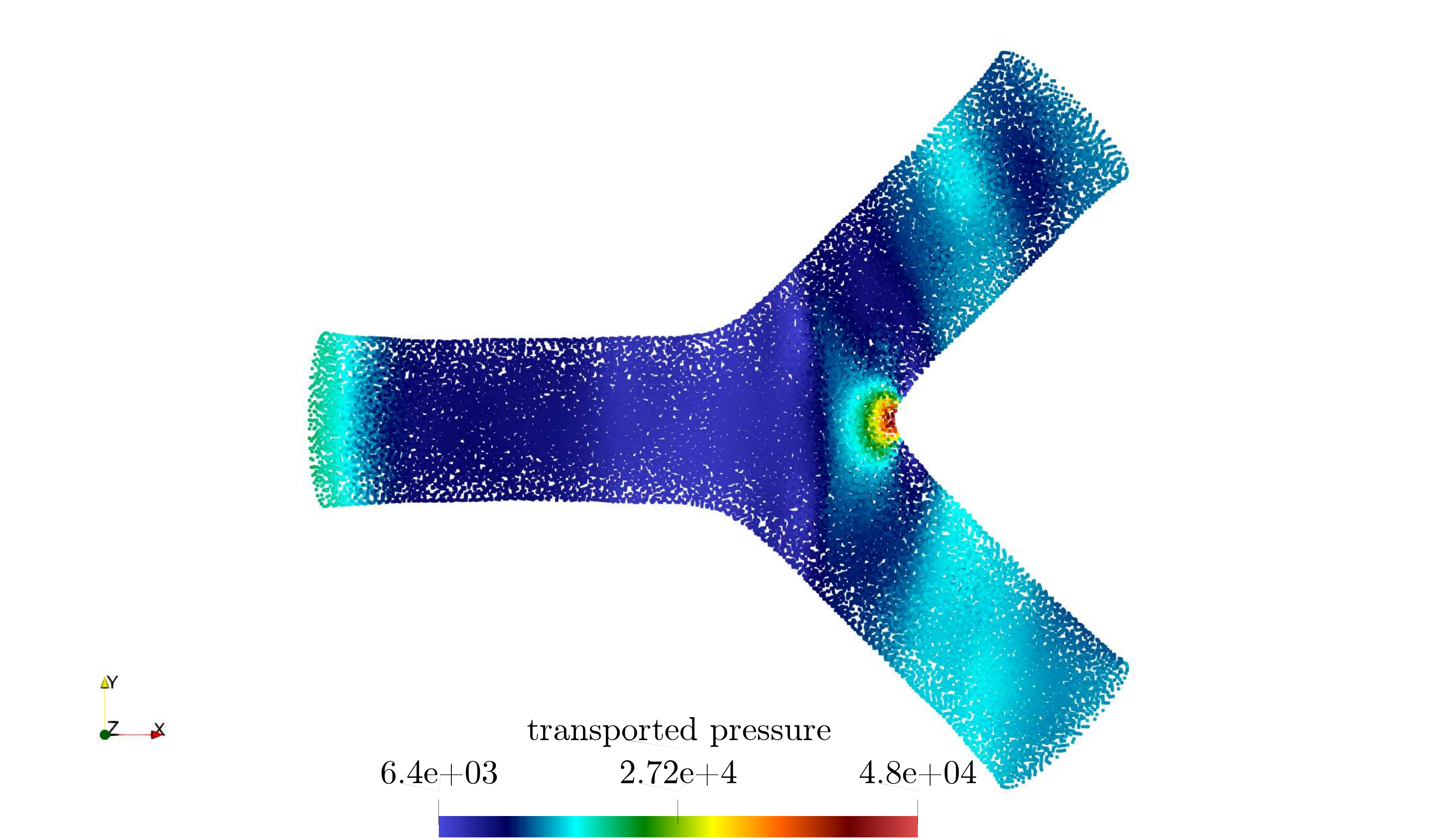}
        \caption{Transported Pressure (Modal)}
    \end{subfigure}
    \caption{Transfer of the pressure field from $\Omega_1$ to the target geometry. (a) The original pressure field on the source domain $\Omega_1$. (b) The smoothly deformed pressure field using the barycentric map. (c) The pressure field resulting from the modal map, where points are mapped directly to a subset of the target vertices.}
    \label{fig:field_transfer}
\end{figure} %
\subsection{Application: Blue Noise Sampling}
\label{subsec:blue_noise}

Blue noise sampling aims to generate a set of $N$ discrete points that are spatially well-distributed (i.e., uniform yet random-looking, avoiding clumps and voids) with respect to a given target density. This problem is fundamental in computer graphics for tasks like object placement, dithering, and Monte Carlo integration \cite{Lagae2007,Yuksel2015}. While our RSOT framework, as formulated in Section~\ref{sec:formulation}, assumes fixed target point locations, it can be effectively utilized as a core component within an iterative scheme to optimize these locations. This approach, conceptually similar to Lloyd-type algorithms \cite{degoes2012, Qin2017}, allows us to conclude by demonstrating the versatility of our framework with a generic, differentiable cost function $c(x,y)$, showcasing its power beyond the squared Euclidean metric.

The optimization problem is to find the locations $Y = \{y_1, \dots, y_N\}$ of a discrete measure $\nu(Y) = \frac{1}{N}\sum_{k=1}^N \delta_{y_k}$ that best represents a continuous source measure $\mu$ by minimizing the regularized Wasserstein cost:
\begin{equation}
    \min_{Y \subset \Y^N} F(Y), \quad \text{where} \quad F(Y) \defeq \mathcal{W}_{\varepsilon,c}(\mu, \nu(Y)).
    \label{eq:quantization_problem}
\end{equation}
The first-order necessary condition for a set of locations $Y^*$ to be optimal is that the gradient of the objective function $F(Y)$ with respect to each location $y_k$ must vanish. Using the Envelope Theorem, this condition is:
\begin{equation}
    \nabla_{y_k} F(Y^*) = \int_{\Omega} \nabla_{y_k} c(x, y_k^*) \dd\pi_k^*(Y^*)(x) = \mathbf{0},
    \label{eq:general_optimality_condition}
\end{equation}
where $\pi_k^*(Y^*)$ is the component of the optimal plan associated with site $y_k^*$. This system of equations is implicit and generally cannot be solved directly.

This structure motivates an iterative solution via the Lloyd's algorithm (Algorithm~\ref{alg:blue_noise}). The algorithm alternates between two steps: first, partitioning the source mass by solving the RSOT problem for fixed locations; and second, updating the locations by solving a subproblem for each site:
\begin{equation}
    y_k^{(t+1)} = \argmin_{y \in \Y} \int_{\Omega} c(x, y) \dd\pi_k^{(t)}(x).
    \label{eq:barycenter_subproblem_general}
\end{equation}
The solution to this subproblem is the point that minimizes the expected cost with respect to the mass it receives. We can define this as the generalized, \textit{cost-minimizing barycenter}. The Lloyd's algorithm is thus a fixed-point iteration where each site is updated to become the cost-minimizing barycenter of its assigned mass. When the iteration converges, the locations satisfy the optimality condition of Eq.~\eqref{eq:general_optimality_condition}.

While this framework is general, it is particularly powerful when the cost function is a squared metric distance, $c(x,y) = d(x,y)^2$. For this class of costs, the barycenter subproblem is often well-posed. In the specific case of a Riemannian manifold with the squared geodesic distance, $c(x,y) = d_g(x,y)^2$, the cost-minimizing barycenter is precisely the well-known \textit{Riemannian barycenter}, which can be computed via manifold-based gradient descent (Algorithm~\ref{alg:riemannian_barycenters}).

\begin{algorithm}[ht]
    \caption{Lloyd algorithm on Riemannian manifolds}
    \label{alg:blue_noise}
    \begin{algorithmic}[1]
    \Require Source meshe $\calT_h$, measure $\mu$ with density $\rho$, quadrature $\{x_{q}, w_{q}\}_{q=1}^{N_{q,i}}$, initial guess $\mathbf{y}^{t=0}=\{y^{t=0}_j\}_{j=1}^N, \nu=\{\nu_j\}_{j=1}^N$, cost function $c(\cdot, \cdot)$, $\varepsilon > 0$, tolerance $\delta_{\text{tol},\mathbf{y}}$, Riemannian manifold $\Omega$.
    \Ensure Sampled target support points locations $\mathbf{y}^*$.

    \Repeat (While $\lVert\delta \mathbf{y}\rVert_2 > \delta_{\text{tol},\mathbf{y}}\lVert\mathbf{y}\rVert_2$ and $t<T_{\text{max iterations}}$)
        \State Solve regularized semi-discrete optimal transport between the source measure $\mu$ and the target measure $(\nu, \mathbf{y}^t)$, minimizing $J^h_{\varepsilon}(\bpsi^t)$ with L-BFGS.
        \State Compute Riemannian barycenters with Algorithm~\ref{alg:riemannian_barycenters}, $$\mathbf{y}^{t+1}\leftarrow \textbf{RiemannianBarycenter}(\Omega, \{\mu_j(\cdot|\bpsi^t)\}_{j=1}^N).$$
        \State Compute increment $\delta\mathbf{y}\leftarrow \mathbf{y}^{t+1}-\mathbf{y}^t$.
        \State t = t + 1.
    \Until{convergence to $\mathbf{y}^*$.}
    \end{algorithmic}
\end{algorithm}

\begin{algorithm}[ht]
    \caption{Riemannian barycenters}
    \label{alg:riemannian_barycenters}
    \begin{algorithmic}[1]
    \Require Riemannian manifold with associated geodesic distance and Riemannian metric $(\Omega, d, g)$, conditioned praobability density $\{\mu_j(\cdot|\bpsi)\}_{j=1}^N$, tolerance $\delta_{\text{tol},\mathbf{y}}$, step $\alpha$.
    \Ensure Riemannian barycenters $\{y_j^*\}_{j=1}^{N}\subset\Omega$.

    \Repeat (While $g(\delta \mathbf{y}) > \delta_{\text{tol},\mathbf{y}}$ and $t<T_{\text{max iterations}}$)
        \State Compute gradient $\delta y_j$ of functional to minimize: $$\forall j, T_{y_j^t}\Omega\ni\delta y_j\leftarrow \nabla_{y_j^t} \int_{\Omega} d^2(y_j^t, x)\mu_j(x|\bpsi)\,dx = \int_{\Omega} -2\log_{y_j^t}(x)\mu_j(x|\bpsi)\,dx.$$
        \State Evolve the points along the manifold: $\forall j, \Omega\ni y_j^{(t+1)}\leftarrow \exp(-\alpha\delta y_j)$.
        \State t = t + 1.
    \Until{convergence to $\mathbf{y}^*$.}
    \end{algorithmic}
\end{algorithm}

We illustrate this process on the unit sphere, using the squared geodesic distance as the cost. The source measure is defined by a non-uniform density given by the fifth eigenfunction of the Laplace-Beltrami operator. Figure~\ref{fig:blue_noise} shows the converged point sets for an increasing number of target points ($N=4$ to $9$), with $\varepsilon=\SI{1e-2}{}$, $\delta_{\text{tol},\mathbf{y}}=\SI{1e-3}{}$. Starting from random initial positions sampled on the unit sphere, the algorithm correctly repositions the points to form a well-distributed blue noise pattern that respects both the source density and the manifold's curvature. The points naturally concentrate in the high-density regions (red/yellow) and are sparse in the low-density regions (blue), demonstrating the successful application of the general framework to a non-Euclidean problem.

\begin{figure}[htp!]
    \centering
    \includegraphics[width=0.7\linewidth]{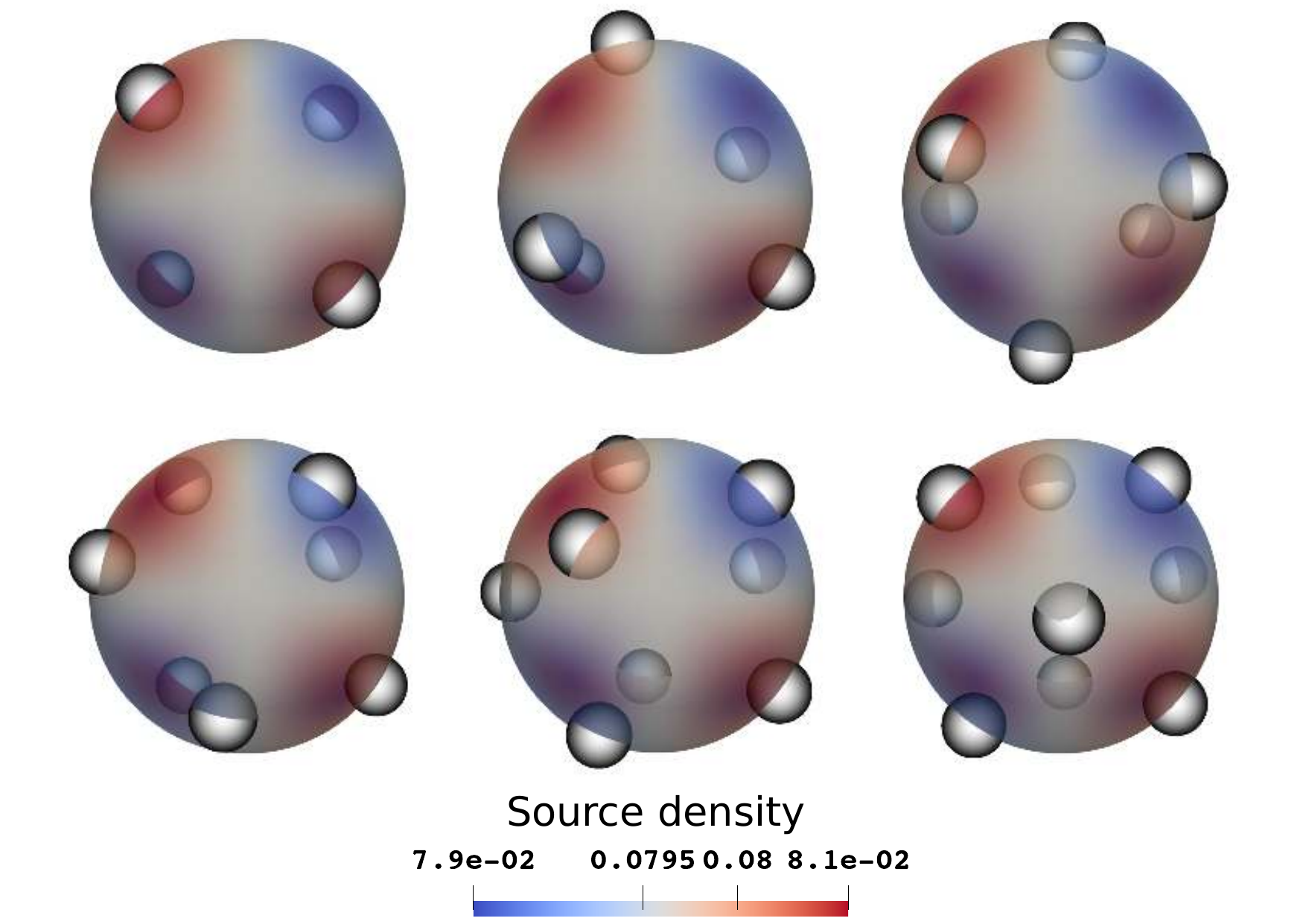}
        \caption{Blue noise sampling on the unit sphere via optimal quantization. The locations are optimized using the iterative Lloyd's algorithm (Algorithm~\ref{alg:blue_noise}) with a squared geodesic distance cost. From top to bottom, left to write an increasing number of target points with the same mass are sampled $N\in\{4,\dots, 9\}$.}
    \label{fig:blue_noise}
\end{figure}
\section{Conclusion}
\label{sec:conclusion}

In this work we have introduced a computational framework for solving large-scale regularized semi-discrete optimal transport (RSOT) problems, particularly those arising from complex geometries and PDE-based data discretized with the finite element method. Our main contribution is the integration of multiple acceleration strategies. By combining adaptive, error-controlled truncation of the Gibbs kernel with fast spatial queries using R-trees, we reduce the per-iteration cost of the dual optimization. This core acceleration is enhanced by a multilevel strategy, which leverages hierarchies of both the source mesh and target measure to provide improved initial guesses. The practical utility of this approach was demonstrated through numerical studies, including the computation of Wasserstein barycenters for 3D vascular geometries and a geometric registration application. The entire methodology is made available through the open-source C++ library \texttt{SemiDiscreteOT}.

Beyond the performance improvements demonstrated, this work opens several avenues for future research and application. A natural extension is the development of comprehensive GPU support. The core computational kernel, the evaluation of the dual functional and its gradient, is well-suited to parallelization and could benefit from the single-instruction, multiple-data (SIMD) architecture of modern GPUs. Porting this kernel would provide additional speedups, making real-time applications more feasible. Preliminary results in this regard are reported in the appendix~\ref{appendix:gpu}.

This work paves the way for applications at the intersection of optimal transport and numerical modeling with PDEs. Examples include: fluid-particle interactions~\cite{diez2024multicellularsimulationsshapevolume}; data assimilation in computational fluid dynamics~\cite{deGoes2015} with applications in hemodynamics and climatology; anisotropic material modeling through variations in non-uniform continuous source densities~\cite{Buze2024}; non-parametric variational inference~\cite{Ambrogioni2018} where optimal transport structures latent spaces; and computational optics~\cite{Rubinstein2017}, where the unregularized limit connects to the Monge-Ampère equation for reflector and lens design.

\section*{Acknowledgments}
\textbf{MK} and \textbf{GR} acknowledge the support provided by the European Union - NextGenerationEU, in the framework of the iNEST - Interconnected Nord-Est Innovation Ecosystem (iNEST ECS00000043 – CUP G93C22000610007) consortium.
The authors also like to acknowledge INdAM-GNCS for its support.

\printbibliography
\newpage
\appendix

\section{Proof of convergence of minimizers of regularized SOT discrete functional}
\label{appendix:proof}
\begin{proof}
    Standard error analysis results in the following estimates:
    \begin{align*}
        |J_\varepsilon(\bpsi) - J_\varepsilon^{h,k,r}(\bpsi)| &= |J_{\varepsilon}(\bpsi)-\int_{\Omega} I_k[f(x)]\rho_h(x)\,dx + \int_{\Omega} I_k[f(x)]\rho_h(x)\,dx-J_\varepsilon^{h,k,r}(\bpsi)|\\
        &\lesssim h^{k+1} ( \lVert \rho\lVert_1\lVert f\rVert_{H^{k+1}(\Omega})+\lVert f\lVert_1\lVert \rho\rVert_{H^{k+1}(\Omega)}+ h^{2r-k} \lVert f\rVert_{\mathcal{C}^{2r-k}}\lVert \rho\rVert_1\overset{h \to 0}{\rightarrow} 0,
    \end{align*}
    where the first term corresponds to the interpolation error and the second to the quadrature error (ultimately linked to the interpolation error with polynomials as Gauss quadrature rules of order $r$ are exact for polynomials of order $2r-1$). For the first term we have used $f\in\mathcal{C}^{k+1}$ and for the second $f\in\mathcal{C}^{2r-k}$. We represented with $I_k$ the approximation of $f$ with finite elements of order $k$. We remark that $\lVert f\rVert_{H^{k+1}(\Omega)}$ depends on $\bpsi$. Similarly,
    \begin{equation*}
        \|\nabla J_\varepsilon(\bpsi) - \nabla J_\varepsilon^{h,k,r}(\bpsi)\|_2^2 = \sum_{i=1}^N |(\nabla J_\varepsilon(\bpsi) - \nabla J_\varepsilon^{h,k,r}(\bpsi))_i|^2,
    \end{equation*}
    so that for every $i\in\{1,\dots N\}:$
    \begin{align*}
        |(\nabla J_\varepsilon(\bpsi) - \nabla J_\varepsilon^{h,k,r}(\bpsi))_i|&=|\nabla J_{\varepsilon}(\bpsi)-\int_{\Omega} I_k[g_i(x)]\rho_h(x)\,dx + \int_{\Omega} I_k[g_i(x)]\rho_h(x)\,dx-\nabla J_\varepsilon^{h,k,r}(\bpsi)|\\
        &\lesssim h^{k+1} ( \lVert \rho\lVert_1\lVert g_i\rVert_{H^{k+1}(\Omega)}+\lVert g_i\lVert_1\lVert \rho\rVert_{H^{k+1}(\Omega)}+ h^{2r-k} \lVert g_i\rVert_{\mathcal{C}^{2r-k}}\lVert \rho\rVert_1\overset{h \to 0}{\rightarrow} 0.
    \end{align*}
    We remark that $\lVert g_i\rVert_{\mathcal{C}^{2r-k}}$ depends on $\bpsi$, for all $i$.

    For the convergence of the minimizers $\bpsi^{h,k,r}_{\varepsilon}\overset{h \to 0}{\rightarrow}\bpsi_{\varepsilon}$, we only need the pointwise convergence of the convex and continuous functionals $J_\varepsilon$ and  $J_\varepsilon^{h,k,r}$.

    Since $J_{\varepsilon}(\bpsi + c\mathbf{1}) = J_{\varepsilon}(\bpsi)$ and $J^{h,k,r}_{\varepsilon}(\bpsi + c\mathbf{1}) = J^{h,k,r}_{\varepsilon}(\bpsi)$ for any $c \in \R$, we restrict to $V = \{\bpsi \in \R^N \mid \sum_{j=1}^N \psi_j \nu_j = 0\}$ for uniqueness.

    We prove equi-coercivity of $J^{h,k,r}:V\subset\mathbb{R}^N\rightarrow\mathbb{R}$ with respect to $h$: for all $\bpsi\in V$
    \begin{align*}
        J_\varepsilon^{h,k,r}(\bpsi) = &\sum_{K \in \calT_h} \sum_{q=1}^{N_{q,K}} \varepsilon \log\left(\sum_{j=1}^N \nu_j \exp\left(\frac{\psi_j - c(x_q^K, y_j)}{\varepsilon}\right)\right) \rho_h(x_q^K) w_q^K\\
        &\geq \sum_{K \in \calT_h} \sum_{q=1}^{N_{q,K}}\max_j \left(\psi_j - c(x^K_q, y_j) + \varepsilon\log \nu_j\right)\rho_h(x_q^K) w_q^K\\
        &\geq \min_{x\in\Omega}\max_j \left(\psi_j - c(x, y_j) + \varepsilon\log \nu_j\right)\left(\sum_{K \in \calT_h} \sum_{q=1}^{N_{q,K}}\rho_h(x_q^K) w_q^K\right)\\
        &= \min_{x\in\Omega}\max_j \left(\psi_j - c(x, y_j) + \varepsilon\log \nu_j\right)\geq\max_j \left(\psi_j - \max_{x\in\Omega}c(x, y_j) + \varepsilon\log \nu_j\right),
    \end{align*}
    and since $\Omega$ is a bounded domain, $\max_{x\in\Omega}c(x, y_j)$ is bounded and at least one component $\psi_j\to+\infty$ (otherwise $\bpsi\notin V$), we have that for all $h$ uniformly:
    \begin{equation*}
        \lVert \bpsi\rVert_2\to+\infty \Rightarrow J_\varepsilon^{h,k,r}(\bpsi)\to+\infty.
    \end{equation*}

    Thanks to equi-coercivity the sequence $\{\bpsi^{h,k,r}_{\varepsilon}\}_h$ is bounded and, due to compactness in $\mathbb{R}^N$, it  admits a minimizer $\overline{\bpsi}$. Convexity and equi-coercivity of $J_\varepsilon^{h,k,r}$ imply equi-continuity that together with uniform boundedness (thanks to equi-coercivity) imply uniform convergence (Ascoli-Arzelà theorem) of $J_\varepsilon^{h,k,r}$ to $J_\varepsilon$ up to subsequences (due to pointwise convergence). Using uniform convergence of $J_\varepsilon^{h,k,r}$ on compact subsets, we have
    \begin{equation*}
        J_\varepsilon(\overline{\bpsi}) = \lim_{h\to 0} J_\varepsilon^{h,k,r}(\bpsi^{h,k,r}_{\varepsilon}),
    \end{equation*}
    that implies,
    \begin{equation*}
        J_\varepsilon(\overline{\bpsi}) = \lim_{h\to 0} J_\varepsilon^{h,k,r}(\bpsi^{h,k,r}_{\varepsilon})\leq \limsup_{h\to 0} J_\varepsilon^{h,k,r}(\bpsi)=J_{\varepsilon}(\bpsi),\qquad \forall\bpsi\in\mathbb{R}^N
    \end{equation*}
    which proves that $\overline{\bpsi}$ is the minimizer of $J_{\varepsilon}$.
\end{proof}
\section{Derivation of Truncation Cutoffs for Global Error Control}
\label{app:truncation_derivation}

We derive the truncation bounds $C_{int}$ and $C_{geom}$ discussed in Section~\ref{subsubsec:truncation_revised} that ensure a prescribed relative error $\tau > 0$ on the continuous dual functional $J_\varepsilon(\bpsi)$ for the semi-discrete problem. We assume a general cost function $c(x,y)$ that satisfies the usual regularity conditions.

The exact dual functional is:
\[
J_\varepsilon(\bpsi) = \int_{\OmegaDomain} \varepsilon \ln\left(\sum_{j=1}^N \nu_j e^{(\psi_j - c(x, y_j))/\varepsilon}\right) \dd\mu(x) - \sum_{j=1}^N \psi_j \nu_j.
\]
Let $S(x, \bpsi)$ be defined as
\[
S(x, \bpsi) = \sum_{j=1}^N \nu_j \exp\left(\frac{\psi_j - c(x, y_j)}{\varepsilon}\right).
\]
We approximate this with a truncated sum $S_{\text{trunc}}(x, \bpsi; C)$,
\[
S_{\text{trunc}}(x, \bpsi; C) = \sum_{j \in I_C(x)} \nu_j \exp\left(\frac{\psi_j - c(x, y_j)}{\varepsilon}\right),
\]
where the index set $I_C(x)$ includes all indices $j$ for which the cost is below a threshold $C$:
\[
I_C(x) = \{j \in \{1,\dots,N\} : c(x, y_j) < C\}.
\]

The truncated functional is:
\[
\widetilde{J}_\varepsilon(\bpsi) = \int_{\OmegaDomain} \varepsilon \ln(S_{\text{trunc}}(x, \bpsi; C)) \dd\mu(x) - \sum_{j=1}^N \psi_j \nu_j.
\]
The error introduced in the functional is $E = J_\varepsilon(\bpsi) - \widetilde{J}_\varepsilon(\bpsi)$:
\[
E = \int_{\OmegaDomain} \varepsilon \ln\left(\frac{S(x, \bpsi)}{S_{\text{trunc}}(x, \bpsi; C)}\right) \dd\mu(x).
\]
Let $a_j(x) = \exp((\psi_j - c(x, y_j))/\varepsilon)$. Then $S(x, \bpsi)$ can be written as
\[
S(x, \bpsi) = S_{\text{trunc}}(x, \bpsi; C) + \sum_{j \notin I_C(x)} \nu_j a_j(x),
\]
so
\[
E = \int_{\OmegaDomain} \varepsilon \ln\left(1 + \frac{\sum_{j \notin I_C(x)} \nu_j a_j(x)}{S_{\text{trunc}}(x, \bpsi; C)}\right) \dd\mu(x).
\]
Using the inequality $\ln(1+z) \le z$ for $z \ge 0$, we get:
\[
E \le \int_{\OmegaDomain} \varepsilon \frac{\sum_{j \notin I_C(x)} \nu_j a_j(x)}{S_{\text{trunc}}(x, \bpsi; C)} \dd\mu(x).
\]
Assume bounds $m \le \psi_j \le M$ for all $j$. For $j \notin I_C(x)$, we have $c(x, y_j) \ge C$. The individual neglected terms are bounded by:
\[
a_j(x) = \exp\left(\frac{\psi_j - c(x, y_j)}{\varepsilon}\right) \le \exp\left(\frac{M - C}{\varepsilon}\right).
\]
The sum over neglected terms is bounded:
\[
\sum_{j \notin I_C(x)} \nu_j a_j(x) \le \left(\sum_{j \notin I_C(x)} \nu_j\right) e^{(M - C)/\varepsilon} \le \left(\sum_{j=1}^N \nu_j\right) e^{(M - C)/\varepsilon}.
\]
We assume the target measure is normalized, $\nu_{tot} = \sum_{j=1}^N \nu_j = 1$. Then
\[
E \le \int_{\OmegaDomain} \varepsilon \frac{e^{(M - C)/\varepsilon}}{S_{\text{trunc}}(x, \bpsi; C)} \dd\mu(x) = \varepsilon e^{(M - C)/\varepsilon} \int_{\OmegaDomain} \frac{\dd\mu(x)}{S_{\text{trunc}}(x, \bpsi; C)}.
\]
Define the integrated quantity $D(\bpsi, C)$ as
\[
D(\bpsi, C) = \int_{\OmegaDomain} [S_{\text{trunc}}(x, \bpsi; C)]^{-1} \dd\mu(x).
\]
We want to ensure the relative error is bounded: $E \le \tau |J_\varepsilon(\bpsi)|$. It suffices to satisfy:
\[
\varepsilon e^{(M - C)/\varepsilon} D(\bpsi, C) \le \tau |J_\varepsilon(\bpsi)|.
\]
Taking logarithms on both sides yields:
\[
\ln(\varepsilon D(\bpsi, C)) + \frac{M - C}{\varepsilon} \le \ln(\tau |J_\varepsilon(\bpsi)|).
\]
Rearranging for $C$, we get:
\[
C \ge M + \varepsilon \ln\left(\frac{\varepsilon D(\bpsi, C)}{\tau |J_\varepsilon(\bpsi)|}\right).
\]
This provides the condition for the \emph{integrated bound} $C_{int}$:
\[
C_{int}(\bpsi, \varepsilon, \tau, D) \ge M + \varepsilon \ln\left( \frac{\varepsilon D(\bpsi, C)}{\tau |J_\varepsilon(\bpsi)|} \right).
\]
This matches equation \eqref{eq:truncation_cutoff_int} in the main text.

To derive the \emph{geometric bound} $C_{geom}$, we estimate $D(\bpsi, C)$ using worst-case analysis. Let $C_0$ be the covering cost:
\[
C_0 = \max_{x \in \OmegaDomain} \min_{1 \le j \le N} c(x, y_j).
\]
For any $x \in \OmegaDomain$, there exists at least one index $j_0$ such that $c(x, y_{j_0}) \le C_0$. If we choose our truncation cutoff $C \ge C_0$, then this index $j_0$ will always be included in the sum $S_{\text{trunc}}(x, \bpsi; C)$, i.e., $j_0 \in I_C(x)$. Let $\underline{\nu} = \min_{1 \le j \le N} \nu_j$ and $m = \min_k \psi_k$.
Then, for any $x \in \OmegaDomain$:
\[
S_{\text{trunc}}(x, \bpsi; C) \ge \nu_{j_0} a_{j_0}(x) = \nu_{j_0} \exp\left(\frac{\psi_{j_0} - c(x, y_{j_0})}{\varepsilon}\right) \ge \underline{\nu} \exp\left(\frac{m - C_0}{\varepsilon}\right).
\]
This provides a uniform lower bound for $S_{\text{trunc}}(x, \bpsi; C)$. Therefore,
\[
\frac{1}{S_{\text{trunc}}(x, \bpsi; C)} \le \frac{1}{\underline{\nu}} \exp\left(-\frac{m - C_0}{\varepsilon}\right).
\]
Now we can bound $D(\bpsi, C)$:
\begin{align*}
D(\bpsi, C) &= \int_{\OmegaDomain} \frac{\dd\mu(x)}{S_{\text{trunc}}(x, \bpsi; C)} \\
&\le \int_{\OmegaDomain} \left( \frac{1}{\underline{\nu}} e^{-(m - C_0)/\varepsilon} \right) \dd\mu(x) \\
&= \frac{1}{\underline{\nu}} e^{-(m - C_0)/\varepsilon} \int_{\OmegaDomain} \dd\mu(x).
\end{align*}
Assuming the source measure is also normalized, $\mu(\OmegaDomain) = \int_{\OmegaDomain} \dd\mu(x) = 1$:
\[
D(\bpsi, C) \le \frac{1}{\underline{\nu}} e^{-(m - C_0)/\varepsilon}.
\]
Substituting this upper bound for $D(\bpsi, C)$ into the inequality derived previously for $C$:
\[
C \ge M + \varepsilon \ln\left(\frac{\varepsilon}{\tau |J_\varepsilon(\bpsi)|} \cdot \frac{1}{\underline{\nu}} e^{-(m - C_0)/\varepsilon} \right),
\]
which simplifies to
\[
C \ge M + \varepsilon \ln\left(\frac{\varepsilon}{\underline{\nu} \tau |J_\varepsilon(\bpsi)|}\right) + \varepsilon \left(-\frac{m - C_0}{\varepsilon}\right),
\]
so that
\[
C \ge M - m + C_0 + \varepsilon \ln\left(\frac{\varepsilon}{\underline{\nu} \tau |J_\varepsilon(\bpsi)|}\right).
\]
Using the potential range $\Gamma(\bpsi) = M-m$, we obtain the condition for the geometric truncation bound $C_{geom}$:
\[
C_{geom}(\bpsi, \varepsilon, \tau) \ge C_0 + \Gamma(\bpsi) + \varepsilon \ln\left(\frac{\varepsilon}{\underline{\nu} \tau |J_\varepsilon(\bpsi)|}\right).
\]
This justifies the formula \eqref{eq:truncation_cutoff_geom_revised} presented in the main text. Note that this bound requires $C \ge C_0$. If the calculated $C_{geom}$ satisfying the inequality is less than $C_0$, one should use $C=C_0$ instead.

\section{GPU acceleration of regularized semi-discrete optimal transport}
\label{appendix:gpu}
As computing paradigms shift from distributed-memory CPU architectures using MPI toward data center environments optimized for GPU acceleration, we demonstrate the performance benefits of regularized semi-discrete optimal transport when implemented on GPUs in Figure~\ref{fig:gpu}. GPU acceleration is enabled through the external open-source library \texttt{Kokkos}~\cite{9485033}. The test case setting is similar to the one in subsection~\ref{subsec:benchmarking}. The source and target measures are uniform. We consider different refinement levels of the source and target meshes and vary the Gauss quadrature order from $1$ to $3$ (even though the source probability density is a constant), the resulting dimensions of the regularized semi-discrete optimal transport problem are shown in Table~\ref{tab:gpu_data}. We remark that the dimension of the potential vector $\bpsi$ is equal to the number of target points $N$. The regularization parameter is $\varepsilon=\SI{1e-2}{}$ and the relative tolerance $\delta_{\text{tol}}=\SI{1e-3}{}$.

The tests were performed on a ProLiant XL675d Gen10 Plus 2xAMD EPYC 7402, 24 Core 2800 MHz, 1024 GB RAM CPU with 1 GPU NVIDIA A100-SXM4 40GB. Number of threads for \texttt{OpenMP} is set to $96$. Regarding the refinement level $5$, only the GPU implementation with Gauss quadrature order $1$ converged in less than $24$ hours: for the other cases, only the average L-BFGS iteration wall time is reported. We remark that, in this preliminary stage, no thresholding of log-sum-exp terms, and no source or target multi-level strategy has been performed. To achieve convergence in reasonable time for the missing data (e.g. GPU with $5$ refinements and Gauss quadrature order $3$), the same approaches introduced in section~\ref{sec:multilevel} can be applied. Additionally, MPI-based distributed-memory implementations with GPU support are readily available through the integration of \texttt{deal.II} and \texttt{Kokkos}.

\begin{table}[htbp!]
  \centering
  \caption{RSOT with increasingly refined meshes and Gauss quadrature order.}
  \label{tab:gpu_data}
  \begin{tabular}{lrrrrr}
      \toprule
      \textbf{N of Refinements} & \multicolumn{3}{c}{\textbf{Source Mesh ($\mathcal{T}_h^1$)}} & \multicolumn{2}{c}{\textbf{Target Points (from $\mathcal{T}_h^2$)}} \\
      \cmidrule(l{2pt}r{2pt}){2-4} \cmidrule(l{2pt}r{2pt}){5-6}
        & Gauss 1 & Gauss 2 & Gauss 3 & \multicolumn{2}{c}{Points ($N$)} \\
      \midrule
      2 (Coarsest) & \num{512} & \num{4096} & \num{13824} & \multicolumn{2}{c}{\num{507}} \\
      3 & \num{4096} & \num{32768} & \num{110592} & \multicolumn{2}{c}{\num{3817}} \\
      4& \num{32768} & \num{262144} & \num{884736} & \multicolumn{2}{c}{\num{29521}} \\
      5 (Finest) & \num{262144} & \num{2097152} & \num{7077888} & \multicolumn{2}{c}{\num{232609}} \\
      \bottomrule
  \end{tabular}
\end{table}

\begin{figure}[htp!]
    \centering
    \includegraphics[width=1\linewidth]{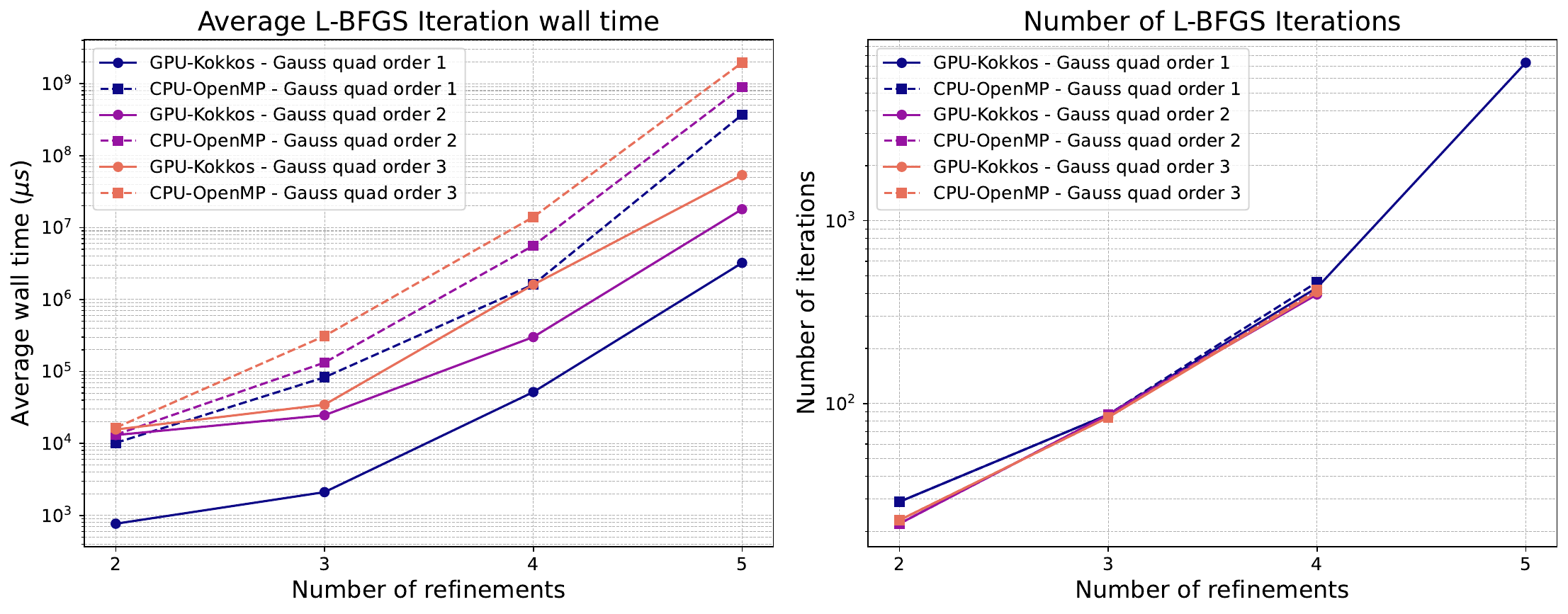}
    \caption{Comparison between accelerators for solving the regularized semi-discrete optimal transport problem with increasingly refined meshes and higher Gauss quadrature orders: OpenMP with $96$ CPU threads against $1$ GPU.}
    \label{fig:gpu}
\end{figure}
 \end{document}